\newcommand{\bG}{\mathbf{G}}
\newcommand{\bB}{\mathbf{B}}
\newcommand{\bPB}{\mathbf{PB}}
\newcommand{\bT}{\mathbf{T}}
\newcommand{\SO}{\textrm{SO}}
\newcommand{\C}{{\mathbb C}}
\newcommand{\N}{{\mathbb N}}
\newcommand{\R}{{\mathbb R}}
\newcommand{\Z}{{\mathbb Z}}
\newcommand{\Q}{{\mathbb Q}}
\newcommand{\A}{{\mathbb A}}
\newcommand{\bs}{\backslash}
\newcommand{\SL}{\operatorname{SL}}
\newcommand{\GL}{\operatorname{GL}}
\newcommand{\PGL}{\operatorname{PGL}}
\renewcommand{\Re}{\operatorname{Re}}
\newcommand{\gf}{\mathfrak{g}}
\theoremstyle{plain}
\newtheorem{lemma}{Lemma}
\newtheorem{theorem}[lemma]{Theorem}
\newtheorem*{theorem*}{Theorem}
\newtheorem*{conj*}{Conjecture}
\newtheorem{cor}[lemma]{Corollary}
\numberwithin{equation}{section}
\newtheorem*{question*}{Question}
\theoremstyle{definition}
\newtheorem*{example*}{Example}
\newtheorem{remark}{Remark}
\begin{document}

\author{Valentin Blomer}
\author{Farrell Brumley}
 
\address{Mathematisches Institut, Endenicher Allee 60, 53115 Bonn, Germany}
\email{blomer@math.uni-bonn.de}

\address{LAGA, Institut Galil\'ee, 99 avenue Jean Baptiste Cl\'ement, 93430 Villetaneuse, France}
\email{brumley@math.univ-paris13.fr}
  
\title[Simultaneous equidistribution]{Simultaneous equidistribution of toric periods and fractional moments of $L$-functions}

\thanks{The first author was supported in part by the DFG-SNF lead agency program grant BL 915/2-2}

\begin{abstract} The embedding of a torus into an inner form of  $\PGL_2$ defines an adelic toric period. A general version of Duke's theorem states that this period equidistributes as the discriminant of the splitting field tends to infinity. In this paper we consider a torus embedded diagonally into two distinct inner forms  of $\PGL_2$. Assuming the Generalized Riemann Hypothesis (and some additional technical assumptions), we show simultaneous equidistribution as the discriminant tends to infinity, with an effective logarithmic rate. Our proof is based on a probabilistic approach to estimating fractional moments of $L$-functions twisted by extended class group characters.  
\end{abstract}

\subjclass[2010]{Primary: 11F67,  11M41}
\keywords{toric periods, equidistribution, Rankin-Selberg $L$-functions, class group, Heegner points, sums of three squares, elliptic curves}

%\setcounter{tocdepth}{1}  \maketitle 
%\tableofcontents
\maketitle

\section{Introduction}
A well-known theorem of Legendre (proved by Gau\ss) states that a positive integer $d$ is a sum of three integer squares precisely when $d$ is not of the form $d=4^a(8b+7)$, for $a, b \in \N \cup \{0\}$. Let $\Z^3_{\rm prim}$ denote the subset of $\Z^3$ whose coordinates are relatively prime. Then the Legendre--Gau{\ss} theorem can be reformulated to state that the set
\begin{equation}\label{11}
\mathscr{R}_d=\big\{ x \in \Bbb{Z}_{\rm prim}^3:  x_1^2 + x_2^2 + x_3^2 = d\big\}
\end{equation}
is non-empty precisely when $d$ lies in $\mathbb{D}=\{d\in\N: d\not\equiv 0,4,7\}$, the set of locally admissible values. Gau{\ss} proved in fact much more; quite remarkably, he established an exact formula for the cardinality $|\mathscr{R}_d|$ in terms of class numbers of quadratic orders in the number field $E=\Q(\sqrt{-d})$ (\cite[Section 291]{Ga}). For example, if $\mathbb{D}_{\rm fund}=\mathbb{D}\cap\mathbb{F}$, where $\mathbb{F}$ denotes the set of square-free integers, then for $d>3$ in $\mathbb{D}_{\rm fund}$ he showed that
\begin{equation}\label{Gauss-formula}
|\mathscr{R}_d|=
\begin{cases} 
24 |{\rm Cl}_E|, & d \equiv 3\pmod{8};\\
12 |{\rm Cl}_E|, & d\equiv 1, 2\pmod{4},
\end{cases}
\end{equation}
where ${\rm Cl}_E$ is the class group of the ring of integers of $E$. In particular, it follows from the work of Dirichlet and Siegel that $|\mathscr{R}_d|=d^{1/2+o(1)}$ for $d\in \mathbb{D}_{\rm fund}$ (in fact for all $d\in\mathbb{D}$). 

With the issues of cardinality settled, one can then investigate the distribution of the points in $\mathscr{R}_d$. Using his ergodic method, Linnik \cite{Li2} in the late 1950's proved that the projection $d^{-1/2}\mathscr{R}_d$ equidistributes on $S^2$ -- the Euclidean sphere in $\R^3$ -- with respect to the rotationally invariant Lebesgue probability measure $m_{S^2}$, for $d\rightarrow\infty$ along a sequence in $\mathbb{D}$ such that
\begin{equation}\label{Linnik-cond}
-d\textit{ is a non-zero square modulo } p,
\end{equation}
where $p$ is a fixed odd prime. This \textit{Linnik condition}, as it is called,   is equivalent to the splitting at $p$ of a stabilizer subgroup of an acting orthogonal group, which in turn allows for the use of measure classification results arising from homogeneous dynamics. The rate of convergence in Linnik's proof can be made sufficiently uniform in $p$ so as to allow for the following reformulation: the set $d^{-1/2}\mathscr{R}_d$ equidistributes on $S^2$, provided that $d\rightarrow\infty$ along a sequence in $\mathbb{D}$ for which there exists a prime $p$ which splits in $E=\Q(\sqrt{-d})$ such that $\log p\ll \log d/\log\log d$. The existence of such small split primes is in fact known to hold under the Generalized Riemann Hypothesis (GRH) for $L$-functions of quadratic Dirichlet characters.  

Gau{\ss}' formula \eqref{Gauss-formula} hints at an interesting  structural relation between $\mathscr{R}_d$ and the class group ${\rm Cl}_E$. In fact it was Venkov \cite{Ve} who 
first explicated Gau{\ss}' formula in terms of quaternion algebras, and we shall adopt this perspective. Let $\bB = \bB^{(2, \infty)}$ denote the rational quaternion algebra ramified at $2$ and $\infty$. A solution $x = (x_1, x_2, x_3) \in \mathscr{R}_d$ can then be identified with a trace-zero integral quaternion $x_1i + x_2j + x_3k$ of reduced norm $d$. When $d\in \mathbb{D}_{\rm fund}$, the choice of a base point $x_0\in\mathscr{R}_d$ yields an optimal embedding $\iota: E \rightarrow \textbf{B}(\Bbb{Q})$, $a + b \sqrt{-d} \mapsto a + b x_0$, relative to the maximal order $\mathscr{O}$ of Hurwitz quaternions. Since $\mathscr{O}$ is principal, if $\mathfrak{a}$ is a fractional ideal in $E$, the $\Z$-module $\iota(\mathfrak{a})\mathscr{O}$ is a principal ideal $(q)$ in $\mathscr{O}$. Letting $\Gamma$ denote the order 12 group of projective units $\mathscr{O}^\times/\{\pm 1\}$, which acts on the coordinate lines in $\R^3$ via even permutations, we may define an action of ${\rm Cl}_E$ on the quotient $\mathscr{R}^*_d=\Gamma\backslash \mathscr{R}_d$ by $[\mathfrak{a}] \cdot x = q^{-1} x q$. This action is free and has one or two orbits, according to the two types of congruence classes in \eqref{Gauss-formula}.

The  problem of equidistribution of integer points on the sphere admits many variants, such as the distribution of Heegner points on the modular curve \cite{Li1}, packets of closed geodesics \cite{Sk} on the modular curve, as well as the supersingular reduction of CM elliptic curves \cite{Mi}. We will review these examples later in Section \ref{sec:examples}. In  each case, the underlying set (or ``packet'') of arithmetic objects admits an action by the class group of a quadratic order. From a modern perspective, what they have in common is the equidistribution of the adelic quotient $[\bT]=\bT(\Q)\bs\bT(\A)$ of an algebraic torus $\bT$ of large discriminant inside the automorphic space $[\bG]=\bG(\Q)\bs\bG(\A)$ of an inner form $\bG$ of $\PGL_2$.

Several decades after Linnik's fundamental contributions, Iwaniec \cite{Iw} developed an innovative technique to bound the Fourier coefficients of half integral weight modular forms which paved the way for removing Linnik's condition \eqref{Linnik-cond}, \textit{unconditionally} on GRH. By extending Iwaniec's methods, Duke, in his famous paper \cite{Du}, proved the equidistribution of $d^{-1/2}\mathscr{R}_d$ on $S^2$ for all $d\in\mathbb{D}_{\rm fund}$, with a power savings rate of convergence. This result, and the others he treated in \cite{Du}, now go collectively under the name of \textit{Duke's Theorems} and cover in particular the case of Heegner points, closed geodesics and integer points on general ellipsoids \cite{DSP}. The method is very different from Linnik's ergodic approach and based on harmonic analysis and automorphic forms. The most uniform treatment uses  Waldspurger's theorem on toric periods \cite{Wa}  and subconvex bounds on quadratic character twists of degree two $L$-functions, due to Duke, Friedlander, and Iwaniec \cite{DFI} when the base field is $\Q$, and to Michel-Venkatesh \cite{MV2} in general. This family of results belongs to the landmark achievements in analytic number theory in the past 30 years. 

\subsection{Simultaneous equidistribution}

A natural generalization of the above setting, first put forward by Michel and Venkatesh in their 2006 ICM proceedings \cite{MV}, is to consider the diagonal embedding of a quadratic number field $E$ into two distinct quaternion algebras $\bB_1$ and $\bB_2$. A dynamical analogue of Goursat's lemma would suggest that, since the projective unit groups $\bG_1=\bPB_1^\times$ and $\bG_2=\bPB_2^\times$ are non-isomorphic, and the adelic quotient $[\bT]$ of the projective torus $\bT$ defined by $E$ equidistributes on each $[\bG_i]$ by Duke's theorem, then the diagonal embedding of $[\bT]$ should equidistribute on the product $[\bG_1]\times [\bG_2]$, as the discriminant  of $E$ gets large.

This problem was approached through ergodic theory by Einsiedler and Lindenstrauss, by bootstrapping Duke's theorem in each $\bG_i$ by means of their joinings theorem \cite{EL}. However, their method, like that of Linnik, requires an auxiliary congruence condition on the allowed set of discriminants, similar to \eqref{Linnik-cond} but with \emph{two} auxiliary primes. This \textit{double Linnik condition} guarantees the existence of an action by a higher rank torus, which can be shown to enjoy decisive measure rigidity properties. Arithmetic applications of the joinings theorem have been explicated in \cite{AES, ALMW, AEW, Kh}, with many still to come.

There is an important quantitative distinction between Linnik's result and that of Einsiedler and Lindenstrauss. While Linnik's condition \eqref{Linnik-cond} can be removed under GRH, the joint equidistribution statements of Einsiedler and Lindenstrauss are ineffective (without a rate of convergence), and it is presently unknown whether their methods can be strengthened to allow for a replacement of the double Linnik condition by the assumption of GRH.   

The main result of this paper is a proof of this conjecture conditionally on the Generalized Riemann Hypothesis (and some minor simplifying assumptions). Like Duke, we approach the problem through automorphic forms and $L$-functions, using Waldspurger's formula as a crucial input. Unlike his setting, subconvexity is not enough, and one must estimate fractional moments of a certain family of Rankin-Selberg $L$-functions. The assumption of GRH allows us to use methods in probabilistic number theory pioneered by Soundararajan \cite{So}. The analytic heart of the paper is Theorem \ref{thm3} in Section \ref{sec2}.  A special case of this result can be rephrased as a non-trivial bound for Bessel periods of Yoshida lifts on ${\rm GSp}(4)$, cf.\ Corollary \ref{cor4}.

We delay the full statement of our main theorems until Section \ref{main}, since they require substantial notational preliminaries. Instead, we provide an illustrative special case, building upon Linnik's sphere problem. More examples will be given in Section \ref{sec:examples}. 

\medskip

\noindent {\sc Example.} We let be $\bB$ one of the five quaternion algebras over $\Q$ having class number 1 (cf.\ \cite[Theorem 25.4.1]{Vo}), with discriminants 2, 3, 5, 7, or 13.  Let $Q={\rm Nm}|_{\bB^0}$ be the reduced norm form ${\rm Nm}$ restricted to the trace-zero elements $\bB^0$ of $\bB$. Choose a maximal order $\mathscr{O}$ in $\bB(\Q)$ and write $\mathscr{O}^0=\mathscr{O}\cap\bB^0(\Q)$. If $\mathscr{O}_{\rm prim}^0$ denotes the subset of primitive elements we put, for any $d\in\N$,
\[
\mathscr{R}_d(Q)=\{x\in\mathscr{O}_{\rm prim}^0: Q(x)=d\}.
\]
Let $\mathbb{D}(Q)=\{d\in\N: \mathscr{R}_d(Q)\neq\emptyset\}$ and $\mathbb{D}_{\rm fund}(Q)=\mathbb{D}(Q)\cap\mathbb{F}$. Shemanske \cite{Sh} extended the method of Venkov to show that for $d\in\mathbb{D}_{\rm fund}(Q)$, with $d>3$, the class group $\text{Cl}_E$ of $E = \Bbb{Q}(\sqrt{-d})$ acts freely with one, two, four, or eight orbits on $\mathscr{R}_d^*(Q)=\Gamma\backslash\mathscr{R}_d(Q)$, where $\Gamma$ is the group of projective units $\mathscr{O}^\times/\{\pm 1\}$.

Define the ellipse $V_Q= \{x \in \Bbb{R} \mid Q(x) = 1\}$, endowed with the probability measure $m$ induced by assigning to any $\Omega\subset V_Q$ the Lebesgue volume of $\cup_{x\in\Omega}[0,1]x$. Duke and Schulze-Pillot \cite{DSP} proved that $\mathscr{R}_d^*(Q)$ equidistributes on $\Gamma\bs V_Q$ relative to the measure $m$ as $d\rightarrow\infty$ in $\mathbb{D}_{\rm fund}(Q)$.

Now take two \textit{distinct} quaternion algebras $\bB_1$ and $\bB_2$  over $\Q$ of class number 1. Fixing base points $x_i \in \mathscr{R}_d^*(Q_i)$ we can consider the joint orbit
\begin{equation}\label{joint-orbit}
\Delta\mathscr{R}_d^*(Q_1,Q_2) =   \big\{(tx_1,  tx_2) \mid t \in \text{Cl}_E\big\}\subseteq \mathscr{R}_d^*(Q_1) \times \mathscr{R}_d^*(Q_2).
\end{equation}
A special case of Theorem \ref{thm1} below is the following result.  \textit{Assume the Generalized Riemann Hypothesis.  Then $d^{-1/2}\Delta\mathscr{R}_d^*(Q_1,Q_2)$ equidistributes in $\Gamma_1\bs V_{Q_1}\times\Gamma_2\bs V_{Q_2}$ with respect to the product measure $m_1\times m_2$, as $d \rightarrow \infty$ in $\mathbb{D}_{\rm fund}(Q_1)\cap \mathbb{D}_{\rm fund}(Q_2)$.} 
 
\subsection{The conjecture of Michel-Venkatesh}\label{sec:M-V}
We now pass to the adelic language. %, and give the precise formulations of our theorems. 
To set up the stage, we wish to review the most general form of Duke's theorems, as they have been refined and generalized in recent years, most notably by Einsiedler, Lindenstrauss, Michel, and Venkatesh.

Let $\bB$ be a quaternion algebra defined over a number field $F$ and write $\bPB^\times$ for its group of projective units. A \textit{homogeneous toral subset} inside the automorphic quotient space $[\bPB^\times]=\bPB^\times(F)\bs\bPB^\times(\A_F)$ is a set of the form
\[
[\bT_\iota.{\rm g}]= \bT_\iota (F)\bs\bT_\iota (\A_F).{\rm g},
\]
where $\bT_\iota\subset\bPB^\times$ is the image under a rational embedding $\iota:\bT\hookrightarrow\bPB^\times$ of an anisotropic algebraic torus over $F$ and ${\rm g}\in\bPB^\times(\A_F)$. One can associate with $[\bT_\iota.{\rm g}]$ two important objects:
\begin{enumerate}
\item a positive number ${ D}$ called the \textit{discriminant}, defined in \cite[\S 4.2]{ELMV2}, which encodes the arithmetic complexity of $[\bT_\iota.{\rm g}]$;
\item a ${\rm g}^{-1}\bT_\iota(\A){\rm g}$-invariant probability measure $\mu$ on $[\bT_\iota.{\rm g}]$, given by the push-forward under $t\mapsto \iota(t){\rm g}$ of the normalized Haar measure on $\bT_\iota (F)\bs\bT_\iota (\A_F)$.
\end{enumerate}

Now let $[\bT_{\iota_n}.{\rm g}_n]$ be a sequence of homogeneous toral subsets, with associated probability measures $\mu_n$, such that ${ D}_n\rightarrow\infty$. The following theorem, which was stated in \cite[Theorem 4.6]{ELMV2}, gives a minimal set of conditions under which $[\bT_{\iota_n}.{\rm g}_n]$ equidistributes with respect to a convex combination of homogeneous probability measures on $[\bPB^\times]$.

\begin{theorem*}
Any weak-* limit of $\mu_n$ is a homogeneous probability measure on $[\bPB^\times]$ invariant under $\bPB^\times(\A_F)^+$. Here, $\bPB^\times(\A_F)^+$ is the image of $\bB^{(1)}(\A_F)\rightarrow\bPB^\times(\A_F)$, where $\bB^{(1)}$ is the simply connected cover of $\bPB^\times$. Moreover, the rate of convergence can be quantified, with an error term of the form $O({ D}^{-\delta})$ for some $\delta>0$. 
\end{theorem*}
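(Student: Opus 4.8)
The plan is to prove this by the harmonic-analytic route going back to Duke, in the adelic form developed by Michel--Venkatesh and by Einsiedler--Lindenstrauss--Michel--Venkatesh. Write $X=[\bPB^\times]$ and decompose $L^2(X)=\C\mathbf 1\oplus\cE_{\rm fin}\oplus L^2_{\rm cusp}\oplus L^2_{\rm cont}$, where $\cE_{\rm fin}$ is the finite-dimensional span of the finite-order unitary characters of $X$, i.e.\ those automorphic functions that factor through the finite component group $\bPB^\times(\A_F)/\bPB^\times(F)\bPB^\times(\A_F)^+$; every function in $\C\mathbf 1\oplus\cE_{\rm fin}$ is $\bPB^\times(\A_F)^+$-invariant. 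First I would reduce, by a Sobolev-norm and partition-of-unity argument, to estimating $\mu_n(\phi)$ for $\phi$ ranging over a fixed countable family of smooth vectors, with the bound allowed to depend polynomially on a fixed Sobolev norm of $\phi$; this is the standard device that upgrades pointwise decay of $\mu_n(\phi)$ to an effective discrepancy bound $O(D^{-\delta})$.

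Next I would treat the three nontrivial pieces of the decomposition separately, starting with the easy one. For $\phi=\eta\in\cE_{\rm fin}$ one has $\mu_n(\eta)=\eta({\rm g}_n)\int_{[\bT_{\iota_n}]}(\eta\circ\iota_n)(t)\,dt$, which equals $\eta({\rm g}_n)$ when the character $\eta\circ\iota_n$ of $[\bT_{\iota_n}]$ is trivial and vanishes otherwise. Passing to a subsequence along which each surviving $\eta({\rm g}_n)$ converges, the contribution of $\C\mathbf 1\oplus\cE_{\rm fin}$ to any weak-$*$ limit is precisely the uniform probability measure on a coset of an open subgroup containing $\bPB^\times(F)\bPB^\times(\A_F)^+$, which is a homogeneous, $\bPB^\times(\A_F)^+$-invariant measure; this accounts for the ``convex combination of homogeneous measures'' shape of the limit. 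So the entire content is to show that $\mu_n(\phi)\to 0$, with a power saving, whenever $\phi$ is cuspidal or lies in the continuous spectrum.

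For cuspidal $\phi$ generating an automorphic representation $\pi$, the period $\mu_n(\phi)=\int_{[\bT_{\iota_n}]}\phi(\iota_n(t){\rm g}_n)\,dt$ is a toric period for the trivial character, and I would invoke Waldspurger's formula \cite{Wa} in explicit form to write
\[
|\mu_n(\phi)|^2 = C_\infty(\phi)\,\frac{L(\tfrac12,\pi)\,L(\tfrac12,\pi\otimes\chi_{E_n/F})}{L(1,\pi,\Ad)}\prod_{v\mid D_n}\ell_v,
\]
where, once $\mu_n$ is normalized to total mass $1$, the ramified local factors $\prod_{v\mid D_n}\ell_v$ together with the measure normalization contribute $D_n^{-1/2+o(1)}$. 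Since the convexity bound $L(\tfrac12,\pi\otimes\chi_{E_n/F})\ll_{\pi,\eps}D_n^{1/4+\eps}$ is exactly borderline, the decay must come from \emph{subconvexity} in the conductor aspect, $L(\tfrac12,\pi\otimes\chi_{E_n/F})\ll_\pi D_n^{1/4-\delta'}$, due to Duke--Friedlander--Iwaniec over $\Q$ \cite{DFI} and to Michel--Venkatesh in general \cite{MV2}, which yields $\mu_n(\phi)\ll_\phi D_n^{-\delta}$. For $\phi$ in the continuous spectrum the same scheme applies: the toric period of an Eisenstein series unfolds (Hecke) to a product of Hecke $L$-functions of $E_n$ and $F$ on the critical line, to which one again applies subconvexity in the conductor aspect; the residual/constant-term part of the Eisenstein series feeds into the $\C\mathbf 1$-component and is harmless.

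Two further points complete the proof, and this is where I expect the real work to lie. First, one must rule out escape of mass, so that the weak-$*$ limit is a genuine probability measure; this is the quantitative non-divergence of toral packets, namely a uniform-in-$D_n$ bound of the form ($\mu_n$-mass of the height-$\ge Y$ part of a cusp neighbourhood) $\ll Y^{-c}$, a reduction-theory/geometry-of-numbers input that I would take from \cite{ELMV2} and its refinements. Second — and this is the principal obstacle — running Waldspurger's formula over a general number field requires the explicit local computations at the archimedean and ramified places, and above all the subconvex bounds for the relevant $\GL_2$ and $\GL_1$ $L$-functions in the conductor aspect with effective control of all implied constants; this analytic input is exactly what makes the unconditional statement hard. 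Granting it, the spectral/period argument above is essentially formal and delivers the equidistribution with error $O(D^{-\delta})$.
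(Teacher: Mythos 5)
The paper does not prove this theorem; it is quoted as background from \cite[Theorem 4.6]{ELMV2}, and the surrounding discussion (Waldspurger on toric periods plus subconvexity for quadratic twists, via \cite{Wa, DFI, MV2}) is exactly the route your sketch follows, including the spectral decomposition, the handling of the character spectrum as the limit's coset support, the Eisenstein unfolding to Hecke $L$-functions, and the non-escape of mass from \cite{ELMV2}. So your proposal is a faithful account of the proof the authors are implicitly invoking.

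One numerical slip worth correcting: with $\mu_n$ a probability measure, the Waldspurger-type identity gives $|\mu_n(\phi)|^2 \asymp D_n^{-1/2}\,L(\tfrac12,\pi\otimes\chi_{E_n/F})$ up to bounded factors, where $\pi\otimes\chi_{E_n/F}$ has analytic conductor $\asymp D_n^2$. Hence the convexity bound is $L(\tfrac12,\pi\otimes\chi_{E_n/F})\ll_{\pi,\eps} D_n^{1/2+\eps}$ (not $D_n^{1/4+\eps}$), which is what makes the period estimate ``exactly borderline'' as you say; the subconvex input is correspondingly $\ll D_n^{1/2-\delta'}$. Your stated exponent $1/4$ applies to the conductor $D_n^2$, not to $D_n$. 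The conclusion $|\mu_n(\phi)|\ll_\phi D_n^{-\delta}$ is unaffected.
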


In their 2006 ICM address, Michel and Venkatesh \cite[\S 2.3, \S 6.4.1]{MV} considered the following simultaneous equidistribution problem. Let $\bB_1$ and $\bB_2$ be two non-isomorphic quaternion algebras over $F$. For $j  = 1, 2$ let %, with projective unit groups 
$\bG_j=\bPB_j^\times$, and %$\bG_2=\bPB_2^\times$. 
let $\bG=\bG_1\times\bG_2$. Let $\bT$ be an anisotropic algebraic torus over $F$ equipped with rational embeddings $\iota_j: \bT\hookrightarrow\bG_j$, for $j=1,2$. We may define a \textit{diagonal homogeneous toral subset} to be the subset of the product space $[\bG]=[\bG_1]\times [\bG_2]$ given by
\[
[\Delta\bT_\iota.{\rm g}]= \Delta\bT_\iota (F)\bs \Delta\bT_\iota (\A_F).{\rm g},
\]
where $\Delta\bT_\iota\subset\bG$ is the image under the diagonal embedding $\iota=(\iota_1,\iota_2):\bT\hookrightarrow\bG$ and ${\rm g}\in\bG(\A_F)$. As before, one may associate with $[\Delta\bT_\iota.{\rm g}]$ its discriminant ${ D}=\min ({ D}_1,{ D}_2)$ and a natural probability measure $\Delta\mu$ on $[\bG]$. Let $\bG(\A_F)^+=\bG_1(\A_F)^+\times\bG_2(\A_F)^+$.

\begin{conj*}[Michel--Venkatesh]
Let $[\Delta\bT_{\iota_n}.{\rm g}_n]$ be a sequence of diagonal homogeneous toral subsets satisfying $D_n\rightarrow\infty$. Then any weak-* limit of $\Delta\mu_n$ is a homogeneous probability measure on $[\bG]$ invariant under $\bG(\A)^+$.
\end{conj*}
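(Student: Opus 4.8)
\medskip
\noindent\textit{Strategy of proof.}
The plan is to follow Duke's automorphic approach. By Weyl's equidistribution criterion and the spectral decomposition of $L^2([\bG])$, to show that every weak-$*$ limit of $\Delta\mu_n$ is $\bG(\A_F)^+$-invariant it suffices to prove that
\[
\Delta\mu_n(\phi_1\otimes\phi_2)=\int_{[\bT]}\phi_1\big(\iota_{1,n}(t)\,{\rm g}_{1,n}\big)\,\phi_2\big(\iota_{2,n}(t)\,{\rm g}_{2,n}\big)\,d\mu_{\bT}(t)\ \longrightarrow\ 0
\]
for $\phi_j$ a unit vector in a cuspidal automorphic representation $\pi_j$ of $\bG_j$ ($j=1,2$), where $\mu_{\bT}$ is the Haar probability measure on $[\bT]=\bT(F)\bs\bT(\A_F)$. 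The remaining test functions are accounted for by the single-factor theorem of \cite{ELMV2} quoted above (applied to one coordinate, the weight in the other being harmless), and the continuous spectrum — occurring only when some $\bB_j$ is split — is dispatched similarly, with the Rankin--Selberg $L$-functions below replaced by products of Hecke $L$-functions of $E$, for which GRH is more than enough. So the new content is the displayed limit for cuspidal $\phi_1,\phi_2$.

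Next I would expand the integrand over the finite group of characters $\chi$ of the extended class group attached to $[\bT]$: writing $t\mapsto\phi_j(\iota_{j,n}(t){\rm g}_{j,n})=\sum_\chi P_\chi(\phi_j)\,\chi(t)$ with $P_\chi(\phi_j)$ the twisted toric period, orthogonality of characters yields
\[
\Delta\mu_n(\phi_1\otimes\phi_2)=\sum_\chi P_\chi(\phi_1)\,P_{\chi^{-1}}(\phi_2),\qquad \sum_\chi|P_\chi(\phi_j)|^2=\int_{[\bT]}\big|\phi_j(\iota_{j,n}(t){\rm g}_{j,n})\big|^2d\mu_{\bT}(t)=O(1),
\]
the last bound by the single-factor theorem applied to $|\phi_j|^2$. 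Now comes Waldspurger's formula \cite{Wa}: up to the fixed quantity $L(1,\Ad\,\pi_j)$ and local factors which — under the congruence/coprimality hypotheses imposed on $D$ — are uniformly pinched between two positive constants, $|P_\chi(\phi_j)|^2$ equals $D^{-1/2}L(1/2,\pi_{j,E}\otimes\chi)$, the central value being non-negative by the formula itself; moreover $L(1/2,\pi_{2,E}\otimes\chi^{-1})=L(1/2,\pi_{2,E}\otimes\chi)$. Hence, by the triangle inequality, the factorisation $L(s,\pi_{1,E}\otimes\chi)L(s,\pi_{2,E}\otimes\chi)=L(s,(\pi_1\boxplus\pi_2)_E\otimes\chi)$, and the fact that there are $\asymp\sqrt D$ characters $\chi$ in all,
\[
\big|\Delta\mu_n(\phi_1\otimes\phi_2)\big|\ \ll_{\phi_1,\phi_2}\ \frac{1}{\sqrt D}\sum_\chi L\!\left(\tfrac12,(\pi_1\boxplus\pi_2)_E\otimes\chi\right)^{1/2}.
\]
Here $\pi_1\boxplus\pi_2$ is (the transfer to ${\rm GSp}(4)$ of) a Yoshida lift — whence the Bessel-period reformulation Corollary~\ref{cor4} — and $L(s,(\pi_1\boxplus\pi_2)_E\otimes\chi)$ is a degree-$8$ Rankin--Selberg $L$-function of conductor $\asymp D^4$, so the right-hand side is the first moment of the \emph{square-root} of this family, which is the analytic heart, Theorem~\ref{thm3}. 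One should note that convexity for the individual central values is hopeless here, and that even GRH — which merely gives $L(1/2,(\pi_1\boxplus\pi_2)_E\otimes\chi)\ll D^{o(1)}$, i.e.\ $|P_\chi(\phi_j)|\ll D^{-1/4+o(1)}$ for \emph{every} $\chi$ — only yields $\Delta\mu_n(\phi_1\otimes\phi_2)\ll D^{o(1)}$, which does not tend to $0$.

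Everything has therefore been reduced to the fractional-moment bound $\sum_\chi L(\tfrac12,(\pi_1\boxplus\pi_2)_E\otimes\chi)^{1/2}\ll\sqrt D\,(\log D)^{-\delta}$ for some absolute $\delta>0$, genuinely below the trivial bound $\sqrt D\,(\log D)^{O(1)}$. For this I would run Soundararajan's probabilistic method \cite{So}: under GRH, $\log L(1/2,(\pi_1\boxplus\pi_2)_E\otimes\chi)$ is majorised by a short Dirichlet polynomial over prime ideals of $E$ together with a controlled error, so the sum is $\ll\sum_\chi\exp\big(\text{short polynomial in }\chi\big)$; expanding this average through orthogonality of characters of the extended class group turns it into a truncated Euler product whose logarithm is governed by the order of the pole at $s=1$ of the Rankin--Selberg $L$-functions $L(s,\pi_i\times\pi_j)$, $i,j\in\{1,2\}$. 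It is exactly here that the hypothesis that $\pi_1$ and $\pi_2$ remain non-isomorphic after the quadratic twist by $E/F$ — part of our technical assumptions, and automatic in the Michel--Venkatesh setting once accidental Jacquet--Langlands coincidences are excluded — enters: it forces the two factors $L(s,\pi_{j,E}\otimes\chi)$ to behave like independent members of an \emph{orthogonal} family, so that the Euler product contributes a genuinely \emph{negative} power of $\log D$. Feeding this back through the chain above gives $\Delta\mu_n(\phi_1\otimes\phi_2)\ll(\log D)^{-\delta}$, hence the conjecture — with the asserted effective logarithmic rate, and a fortiori the $\bG(\A_F)^+$-invariance of all weak-$*$ limits.

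I expect the displayed fractional-moment estimate to be the crux, its difficulty being threefold: (i) executing Soundararajan's argument for a degree-$8$ $L$-function averaged over a class group — rather than over Dirichlet characters — with enough uniformity in the discriminant; (ii) the Euler-product bookkeeping, in particular tracking the negative exponent through the ramified primes, the primes dividing $D$ and the levels of $\pi_1,\pi_2$, and the archimedean place, accurately enough to extract a clean negative power of $\log D$; and (iii) making Waldspurger's formula quantitatively explicit, with local factors uniformly bounded away from $0$ and $\infty$ — which is what the simplifying congruence conditions on $D$ are for. It is worth stressing that the crude triangle inequality in the second paragraph does \emph{not} squander the gain: for an orthogonal family the first moment of $L(1/2)^{1/2}$ is \emph{already} a negative power of $\log D$, so no cancellation among the (unknown) phases of the periods $P_\chi(\phi_j)$ is needed — only the smallness of typical central values that GRH guarantees.
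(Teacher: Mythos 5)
Your strategy tracks the paper's architecture closely: reduce to cuspidal test functions via Weyl's criterion, expand the diagonal period over characters of the (Arakelov) class group by Plancherel, drop signs by the triangle inequality, invoke Waldspurger in the explicit form of File--Martin--Pitale, and reduce to a fractional moment estimate that is attacked by Soundararajan's GRH method of upper-bounding $\log L(\tfrac12)$ by a short Dirichlet polynomial and computing high moments. The Yoshida-lift/degree-$8$ observation, the identification of where the non-isomorphy $\pi_1\neq\pi_2$ enters (through the absence of a pole of $L(s,\pi_1\times\pi_2\times\theta_E)$), and the point that crude triangle inequality plus typical smallness suffices for an orthogonal family, are all faithful to the paper.

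There is, however, one genuine gap: your one-line dispatch of the continuous spectrum. You claim that when some $\bB_j$ is split, the Eisenstein contribution "is dispatched similarly, with the Rankin--Selberg $L$-functions replaced by products of Hecke $L$-functions of $E$, for which GRH is more than enough." This does not work, and the paper's Remark~\ref{rem4} says exactly why it cannot: the crucial negative term $-\tfrac12\log\log x$ in the conditional mean $\mu_{j,D}(x)$ of $\log L(\tfrac12,\pi_j\times\chi)$ is produced by the oscillation of $\lambda_j(p^2)-1$ over inert primes, which is a cuspidality phenomenon. For Eisenstein $\pi_j$ (e.g.\ $\lambda_j(p^2)$ of divisor type), this term flips sign, the expected size of $L(\tfrac12,\pi_j\times\chi)$ \emph{increases}, and the square-root first moment is no longer a negative power of $\log D$. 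GRH bounds $L(\tfrac12,\chi)\ll D^{o(1)}$ for each $\chi$, but plugging that in gives only $\Delta\mu_n(\phi_1\otimes\phi_2)\ll D^{o(1)}$, which you yourself observe is not enough. The paper handles this by explicitly restricting to $C^\infty_{c,\mathrm{disc}}([\bG]_K)$; your proposal, by contrast, asserts more than the paper proves without providing a mechanism.

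Two smaller imprecisions. First, you describe the dual of the extended class group as a "finite group of characters"; this is true only for imaginary $E$. For real $E$, $\widetilde{\mathrm{Cl}}_E^\vee\cong\mathrm{Cl}_E^\vee\times\Z$ is infinite, and some weight decaying in the archimedean frequency $\lambda_\chi$ (supplied by the factor $F(\pi_{j,\infty},\chi_\infty)$ from Waldspurger) is needed to make the sum converge; this is exactly the role of the exponential weight $\exp(-c_0|\lambda_\chi|/\lambda_\pi)$ in Theorem~\ref{thm3}. Second, the condition the paper actually imposes in Theorem~\ref{thm3}(a) is $\pi_1\neq\pi_2$, not your "non-isomorphic after twist by $E/F$"; the stronger statement you invoke is automatically satisfied once $D$ is large relative to $N_1N_2$ (since twisting by $\eta_E$ blows up the conductor), but it is worth being precise that distinct levels coming from distinct quaternion algebras already settle it.
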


%We have expressed the above theorem and conjecture in adelic terms, relative to the automorphic spaces $[\bPB^\times]$ or $[\bG]$. 
By choosing a finite level structure $K_f\subset \bG(\A_f)$ and a compact subgroup $K_\infty\subset \bG(\R)$, we can reinterpret the above conjecture more classically in the double quotient space
\[
[\bG]_K=\bG(F)\bs\bG(\A_F)/K\qquad (K=K_fK_\infty)
\]
by considering the distribution of the images
\[
[\Delta\bT_\iota.{\rm g}]_K=\bG(F) \Delta\bT_\iota(\A_F){\rm g} K
\]
of $[\Delta\bT_\iota.{\rm g}]$ under the natural projection $[\bG]\rightarrow [\bG]_K$. We shall call these projections \textit{diagonal packets}, extending the terminology \cite{ELMV2} to this setting. %When the character (or residual) spectrum of $[\bG]_K$ is trivial, one expects the diagonal packets $[\Delta\bT_\iota.{\rm g}]_K$ to equidistribute with respect to the uniform probability measure on $[\bG]_K$, induced from the unique right $\bG(\A_F)$-invariant probability measure $dg$ on $[\bG]$. A wide class of examples of $K$ for which the character spectrum of $[\bG]_K$ is trivial, say when $F=\Q$, is given in \cite[Example 10.5]{ALMW}.

\subsection{Main results}\label{main}
We are now ready to state our principal result. In it, we establish the conjecture of Michel--Venkatesh for diagonal packets, under the assumption (most notably) of the Generalized Riemann Hypothesis. To simplify the presentation, we have made  further restrictions, including maximal level structure, optimal embeddings, and the number field $\Q$. Moreover, we test convergence only against functions in the discrete spectrum, an assumption only relevant if one of $\bB_j$ is the matrix algebra (for more on this assumption, see Remark   \ref{rem4}). More precisely, we give ourselves the following data.

Let $\bB_1,\bB_2$ be non-isomorphic quaternion algebras over $\Q$  and write $\bG_j=\bPB_j^\times$ for $j = 1, 2$. Let $\mathscr{O}_j$ be a maximal order in $\bB_j(\Q)$. For a prime $p$ let $\bG_j(\Z_p)$ denote the projective unit group of the local maximal order $\mathscr{O}_{j,p}=\mathscr{O}_j\otimes\Z_p$. Let $\bG_j(\widehat{\Z})=\prod_p \bG_j(\Z_p)$. Let $\bG_j(\A_\Q)$ denote the adelic points relative to the subgroups $\bG_j(\Z_p)$.  Let $K_{\infty,j}\subset \bG_j(\R)$ be % 
\begin{itemize}
\item  either $\bG_j(\R)$ itself if $\bB_j\otimes_\Q\R$  is non-split,
\item  or a maximal compact torus if $\bB_j\otimes_\Q\R$  is non-split or non-split. 
\end{itemize}
When $\bB_j$ is split at infinity, we fix an isomorphism of $\bB_j(\R)$ with $M_2(\R)$ which induces an isomorphism of $\bPB^\times_j(\R)$ with $\PGL_2(\R)$ sending $K_{\infty,j}$ to ${\rm PSO}(2)$.

Let $E$ be a quadratic field extension of $\Q$ with ring of integers $\mathcal{O}_E$. Let $\iota_j:E\hookrightarrow\bB_j$ be an  optimal embedding of $\mathcal{O}_E$ into some maximal order $\mathbb{O}_j$ of $\bB(\Q)$, depending on $\iota_j$. This induces optimal local embeddings at each prime $p$, in the following sense. Let $v$ be a finite place of $E$ lying over $p$. Let $E_v$ be the $v$-adic completion of $E$ (a quadratic \'etale algebra) and write $\mathcal{O}_{E,v}$ for its maximal order. Let $\mathbb{O}_{j,p}=\mathbb{O}_j\otimes\Z_p$. Then $\iota_j(\mathcal{O}_{E_v})=\iota_j(E_v)\cap\mathbb{O}_{j,p}$. Since $\mathscr{O}_j$ is everywhere locally isomorphic to $\mathbb{O}_j$, there are ${\rm g}_{j,p}\in\bG_j(\Q_p)$ such that $\mathscr{O}_{j,p}={\rm g}_{j,p}\mathbb{O}_{j,p}{\rm g}_{j,p}^{-1}$. In this way we obtain
\begin{equation}\label{eq:optimal}
\iota_j(\mathcal{O}_{E_v})=\iota_j(E_v)\cap {\rm g}_{j,p}^{-1}\mathscr{O}_{j,p} {\rm g}_{j,p}.
\end{equation}
 Let ${\rm g}_{j,f}=({\rm g}_{j,p})_p\in \bG_j(\A_f)$. Let $\bT=({\rm Res}_{E/\Q}\mathbb{G}_m)/\mathbb{G}_m$. Then $\iota_j$ induces an embedding $\iota_j:\bT\hookrightarrow\bG_j$  which, in view of \eqref{eq:optimal}, satisfies 
\begin{equation}\label{Tf-optimality}
{\iota_j}(\bT(\widehat{\Z}))=\iota_j(\bT(\A_f))\cap {\rm g}_{j,f}^{-1}\bG_j(\widehat{\Z}){\rm g}_{j,f}. 
\end{equation}
We write  $\bT_{\iota_j}=\iota_j(\bT)\subset\bG_j$. Assume furthermore that ${\rm g}_{j, \infty}\in\bG_j(\R)$ is such that
\begin{equation}\label{conjugation-Tinfty}
\begin{cases}
&\!\!\!\!{\rm g}_{j,\infty}\bT_{\iota_j}(\R){\rm g}_{j,\infty}^{-1}\subset K_{\infty, j},\;\text{when}\; \bT(\R)\;\text{is anisotropic};\\
&\!\!\!\!{\rm g}_{j,\infty} \bT_{\iota_j}(\R){\rm g}_{j,\infty}^{-1}\;\text{is, under the fixed identification of $\bPB^\times(\R)$ with $\PGL_2(\R)$ above,}\\ &\hspace{2,3cm} \text{the group of projective diagonal matrices, when}\; \bT(\R)\;\text{is isotropic}.
\end{cases}
\end{equation}
Put ${\rm g}_j=({\rm g}_{j,f},{\rm g}_{j,\infty})\in\bG(\A_\Q)$.

Put $\bG=\bG_1\times\bG_2$ and let $dg$ be the right $\bG(\Bbb{A}_{\Bbb{Q}})$-invariant probability measure. Let $\bG(\widehat{\Z})=\bG_1(\widehat{\Z})\times\bG_2(\widehat{\Z})$ and $K_\infty=K_{\infty,1}\times K_{\infty,2}$. Put $K=\bG(\widehat{\Z})K_\infty$. Let ${\rm g}=({\rm g}_1,{\rm g}_2)\in \bG(\A_\Q)$. Let $[\Delta\bT_\iota.{\rm g}]$ be a diagonal homogeneous toral subset in $[\bG]$, where $\iota=(\iota_1,\iota_2):\bT\hookrightarrow\bG$. Then $[\Delta\bT_\iota.{\rm g}]$ is endowed with its invariant probability measure $\Delta \mu$, which we shall write simply by $dt$. 
Denote by ${ D}$ the discriminant of the packet $[\Delta\bT_\iota.{\rm g}]_K$. The conditions \eqref{eq:optimal} and \eqref{conjugation-Tinfty}, together with suitable choices of archimedean metric normalizations, imply that ${ D}$ is the absolute value of the discriminant of $E$; see Section \ref{sec:discriminant} for details.

Let $C^{\infty}_c([\bG]_K)$ denote the space of right $K$-invariant compactly supported functions $f : [\bG]\rightarrow \Bbb{C}$ such that $g\mapsto f(xg)$ is in $C_c^\infty(\bG(\R))$ for every $x\in [\bG]$. Let $C^{\infty}_{c,{\rm disc}}([\bG]_K)= C^{\infty}_c([\bG]_K)\cap L^2_{\rm disc}([\bG]_K)$. We have $C^{\infty}_{c,{\rm disc}}([\bG]_K) = C^{\infty}_{c}([\bG]_K)$  unless one of $\bG_j$ is ${\rm PGL}_2$. For $f\in C^{\infty}_c([\bG]_K)$ let $\mathcal{S}_{\infty,d}(f)=\sum_{{\rm ord}(\mathcal{D})\leq d}\|\mathcal{D}f\|_{L^\infty([\bG]_K)}$, where $\mathcal{D}$ runs over monomials of degree at most $d$ in a fixed basis of ${\rm Lie}(\bG(\R))$.

\begin{theorem}\label{thm1}
Let the notations be as above. Then, {\rm under the Generalized Riemann Hypothesis}, the diagonal packets $[\Delta\bT_\iota.{\rm g}]_K$ equidistribute on $[\bG]_K$ relative to $C_{c,{\rm disc}}^\infty([\bG]_K)$, with an effective rate of convergence of the form $O_\varepsilon((\log { D})^{-1/4+\varepsilon})$, as ${ D}\rightarrow\infty$. More precisely, there is $d\in\N$ such that for every %$K$-invariant function 
$f\in C_{c,{\rm disc}}^\infty([\bG]_K)$, and every $\varepsilon>0$, we have
\[
\int_{[\Delta\bT_\iota]} f(t{\rm g})dt=\int_{[\bG]}f(g)dg+O_\varepsilon\Big(\mathcal{S}_{\infty,d}(f)(\log { D})^{-1/4+\varepsilon}\Big).
\]
\end{theorem}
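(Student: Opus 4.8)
The plan is to test equidistribution against a spectral basis of the product $[\bG]_K=[\bG_1]_K\times[\bG_2]_K$ and reduce Theorem \ref{thm1} to a uniform estimate for toric periods of pure tensors, the latter being supplied by Theorem \ref{thm3}. Write $f=\sum_{\pi_1,\pi_2}c_{\pi_1,\pi_2}\,\phi_{\pi_1}\otimes\phi_{\pi_2}$, where $\pi_j$ runs over the everywhere unramified discrete-spectrum automorphic representations of $\bG_j$ admitting a $K_{\infty,j}$-fixed vector, $\phi_{\pi_j}$ is the $L^2$-normalized $K$-spherical Hecke eigenform (unique up to a phase), and $c_{\pi_1,\pi_2}=\langle f,\phi_{\pi_1}\otimes\phi_{\pi_2}\rangle$. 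Since $f$ is smooth this converges absolutely and uniformly, the part of spectral parameter $\le T$ having $O(T^{O(1)})$ terms (Weyl's law, fixed level) and the tail being $O(T^{-A}\,\mathcal{S}_{\infty,d}(f))$ for suitable $A,d$. Integrating term by term over the compact set $[\Delta\bT_\iota.{\rm g}]$, the $(\pi_1,\pi_2)=({\rm triv},{\rm triv})$ term contributes exactly $\int_{[\bG]}f\,dg$ (its toric period is $1$), any term with one $\phi_{\pi_j}$ constant contributes, by the classical Duke theorem on the other factor (unconditional, power saving $D^{-\delta}$), an error far smaller than claimed, and the tail is absorbed using a crude polynomial-in-parameter bound for the periods. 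With the truncation at $T=(\log D)^{\varepsilon}$, this reduces Theorem \ref{thm1} to
\[
P(\phi_1,\phi_2):=\int_{[\Delta\bT_\iota]}(\phi_1\otimes\phi_2)(t{\rm g})\,dt\ \ll_\varepsilon\ (\log D)^{-1/4+\varepsilon}
\]
uniformly for $\phi_j=\phi_{\pi_j}$ with $\pi_1,\pi_2$ cuspidal, non-trivial, and of spectral parameter $\ll(\log D)^{\varepsilon}$.

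For this I would pass to harmonic analysis on the compact abelian group $[\bT]=\bT(\Q)\bs\bT(\A)$. Because ${\rm g}$ is everywhere optimal and the level maximal, the characters of $[\bT]$ entering the period are exactly the extended class group characters $\chi$ of $E$ --- ordinary ideal class characters when $E$ is imaginary quadratic, enlarged by archimedean winding characters along the closed geodesic when $E$ is real quadratic, of which only $\ll D^{o(1)}$ are relevant in the truncated range --- and Fourier inversion on $[\bT]$ yields
\[
P(\phi_1,\phi_2)=\sum_{\chi}a_1(\chi)\,\overline{a_2(\chi)},\qquad a_j(\chi)=\int_{[\bT]}\phi_j(\iota_j(t){\rm g}_j)\,\chi(t)\,dt.
\]
By Waldspurger's formula \cite{Wa} in its explicit quaternionic form, $|a_j(\chi)|^2\asymp D^{-1/2}\,L(1/2,\pi_j\times\pi_\chi)$ up to local factors that are $D^{o(1)}$ and, under GRH, up to a harmless power of $L(1,\eta_{E})$, where $\pi_\chi$ is the dihedral $\GL_2$-representation attached to $\chi$. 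Here lies the crux. The triangle-inequality bound
\[
|P(\phi_1,\phi_2)|\le\sum_\chi|a_1(\chi)a_2(\chi)|\asymp D^{-1/2}\sum_\chi L(1/2,\pi_1\times\pi_\chi)^{1/2}L(1/2,\pi_2\times\pi_\chi)^{1/2}
\]
does \emph{not} tend to $0$: under GRH this half-moment is $\gg|{\rm Cl}_E|(\log D)^{c}$ for a positive constant $c$, so the right-hand side in fact \emph{grows} like a power of $\log D$; and the same phenomenon persists through every H\"older rearrangement of $\sum_\chi|a_1a_2|$, so subconvexity for the individual degree-four Rankin--Selberg values, and even a pointwise invocation of GRH, are all insufficient. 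Genuine cancellation in the signed sum $\sum_\chi a_1(\chi)\overline{a_2(\chi)}$ is therefore unavoidable.

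This cancellation is exactly what Theorem \ref{thm3} delivers: a probabilistic analysis, in the spirit of Soundararajan \cite{So}, of the fractional moments of the family $\{L(1/2,\pi_1\times\pi_\chi)L(1/2,\pi_2\times\pi_\chi)\}_\chi$ of Rankin--Selberg products twisted by extended class group characters. The mechanism making it work is the hypothesis $\bB_1\not\cong\bB_2$, equivalently $\pi_1\not\cong\pi_2$: because $L(s,\pi_1\times\pi_2)$ is holomorphic and non-vanishing at $s=1$, the series $\sum_p\lambda_{\pi_1}(p)\lambda_{\pi_2}(p)/p$ converges, and hence --- via the Selberg-type formula writing $\log L(1/2,\pi_j\times\pi_\chi)$ as a short Dirichlet polynomial in the values $\chi(\mathfrak p)$ --- the random variables $\chi\mapsto\log L(1/2,\pi_1\times\pi_\chi)$ and $\chi\mapsto\log L(1/2,\pi_2\times\pi_\chi)$, together with the arithmetic phases multiplying them, decorrelate as $D\to\infty$. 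This Goursat-type independence, processed through the moment method, forces the power of $\log D$. Granting Theorem \ref{thm3}, I obtain the displayed bound for $P(\phi_1,\phi_2)$, and feeding it, with the trivial-term estimates, into the truncated spectral expansion gives the stated error $O_\varepsilon(\mathcal{S}_{\infty,d}(f)(\log D)^{-1/4+\varepsilon})$.

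The main obstacle is Theorem \ref{thm3} itself --- the cancellation in the $\chi$-sum. As just indicated, the family of Rankin--Selberg products has a \emph{growing} fractional moment, so neither convexity, nor subconvexity, nor a pointwise application of GRH gets off the ground; what is genuinely needed is the distributional input that the two twisted central values (and the phases multiplying them) become asymptotically independent, and converting this heuristic into a theorem with an explicit exponent is where the analytic work of the paper is concentrated. Secondary technical points requiring care are the uniformity of all estimates in the spectral parameters and, in the real-quadratic case, in the winding characters; the verification, under the optimality and metric normalizations, that the discriminant of the packet equals $|{\rm disc}(E)|$ (Section \ref{sec:discriminant}); and the routine justification of the term-by-term integration, which follows from the uniform convergence of the spectral expansion.
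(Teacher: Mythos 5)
Your reduction --- spectral decomposition of $f$ into pure tensors, Fourier inversion over $\widetilde{\rm Cl}_E$, the explicit Waldspurger formula, and the appeal to Theorem~\ref{thm3} as the analytic engine --- follows the paper's Section~\ref{sec2}. The gap lies in your reading of the character sum. You assert that the triangle-inequality bound $\sum_\chi|a_1(\chi)a_2(\chi)|$ does not tend to $0$ (and even grows like $|{\rm Cl}_E|(\log D)^{c}$ for some $c>0$), so that cancellation in the signed sum $\sum_\chi a_1(\chi)\overline{a_2(\chi)}$ is unavoidable. This is exactly backwards. Inequality~\eqref{plan} in the paper applies the triangle inequality, and the remark immediately following it stresses that this \emph{sacrifices all cancellation in the $\chi$-sum}; what Theorem~\ref{thm3} then proves is that the resulting unsigned fractional moment tends to $0$ like $(\log D)^{-1/4+\varepsilon}$. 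The mechanism you miss is the lognormal saving intrinsic to fractional moments of $L$-functions: under GRH, $\log L(1/2,\pi\times\chi)$ behaves like a Gaussian with mean $\approx -\frac{1}{2}\log\log D$ and variance $\approx\log\log D$, so while the first moment $\frac{1}{|{\rm Cl}_E|}\sum_\chi L(1/2,\pi\times\chi)$ is of order $1$, the half-moment $\frac{1}{|{\rm Cl}_E|}\sum_\chi L(1/2,\pi\times\chi)^{1/2}$ is of order $\exp(-\frac{1}{4}\log\log D+\frac{1}{8}\log\log D)=(\log D)^{-1/8}$; for a product of two $L$-values this gives $(\log D)^{-1/4}$, exactly the claimed saving, with no cancellation in sign whatsoever. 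Your observation that ``the same phenomenon persists through every H\"older rearrangement'' only shows one cannot split $L_1^{1/2}L_2^{1/2}$ back into $L_1$ and $L_2$; it does not rule out estimating $\sum_\chi L_1^{1/2}L_2^{1/2}$ directly as a half-moment, which is precisely what Sections~\ref{sec7}--\ref{sec9} accomplish via the Soundararajan method.

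A secondary inaccuracy: you attribute the mechanism to ``decorrelation'' of $\log L(1/2,\pi_1\times\chi)$ and $\log L(1/2,\pi_2\times\chi)$. The paper's Remark~\ref{rem:correction-factor} states explicitly that these are \emph{not} independent; their correlation is governed by $L(1,\pi_1\times\pi_2\times\theta_E)$. The hypothesis $\pi_1\not\cong\pi_2$ serves not to make them independent but to ensure this degree-eight correlation $L$-value has no pole at $s=1$ and hence stays controlled (polylogarithmically) on the $1$-line under GRH, so that it does not alter the exponent $-1/4$.
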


A variety of situations in which our theorem applies will be given in Section \ref{sec:examples}.  Our effective error term also allows  applications to equidistribution on (very slowly) shrinking subsets of $[\bG]_K$.  

We may also prove an equidistribution statement in the case $\bB_1= \bB_2$ (or otherwise) if we twist the diagonal embedding by allowing each component embedding to travel through $\bT$ with different  speeds:
\[
\iota_{\alpha,\beta}:\bT \rightarrow \bPB_1^\times\times \bPB_2^\times, \qquad t\mapsto (\iota_1(t)^\alpha,\iota_2(t)^\beta),
\]
where $\alpha,\beta\in\N$ are distinct integers. Let $\Delta_{\alpha,\beta}\bT_\iota$ denote the image of $\bT$ under $\iota_{\alpha,\beta}$ and write $[\Delta_{\alpha,\beta}\bT_\iota.{\rm g}]_K$ for the image of $\Delta_{\alpha,\beta}\bT_\iota(\Q)\bs\Delta_{\alpha,\beta}\bT_\iota(\A_\Q).{\rm g}$ in $[\bG]_K$.

\begin{theorem}\label{thm2} Let $\bB_1$ and $\bB_2$ be quaternion algebras over $\Q$ (not necessarily distinct), and otherwise keep the assumptions and notations of Theorem \ref{thm1}. Let $\alpha,\beta\in \N$ be two distinct integers. Suppose that the  class group ${\rm Cl}_E$ of the field $E$ associated with the torus $\bT$ has no $p$-torsion, for all\footnote{By symmetry, we can also assume this for all $p \mid 2\alpha$.} $p\mid 2\beta$.  Then, {\rm under the Generalized Riemann Hypothesis}, there is $d\in\N$ such that for every  %$K$-invariant function
 $f\in C_{c,{\rm disc}}^\infty([\bG])$, and every $\varepsilon > 0$, we have
\[
\int_{[\Delta_{\alpha,\beta}\bT_\iota]} f(t{\rm g})dt  =\int_{[\bG]} f(g) dg + O_{\varepsilon, \alpha, \beta}\Big(\mathcal{S}_{\infty,d}(f)(\log { D})^{-1/4 + \varepsilon}\Big).
\]
\end{theorem}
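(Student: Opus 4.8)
The plan is to follow the architecture of the proof of Theorem~\ref{thm1}, the new feature being that the decoupling of the two $L$-functions will come from the arithmetic of the exponents $\alpha\neq\beta$ rather than from $\bB_1\not\cong\bB_2$. First I would spectrally decompose: writing $f=\int_{[\bG]}f\,dg+\sum_{\phi}\langle f,\phi\rangle\phi$ over an orthonormal basis of the orthogonal complement of the constants in $L^2_{\rm disc}([\bG]_K)$, which may be taken of product form $\phi=\phi_1\otimes\phi_2$ with $\phi_j$ a Hecke eigenform on $[\bG_j]$, the term $\int_{[\bG]}f\,dg$ splits off and one is reduced to bounding the twisted toric period
\[
I_{\alpha,\beta}(\phi_1,\phi_2)=\int_{[\bT]}\phi_1(\iota_1(t)^\alpha{\rm g}_1)\,\phi_2(\iota_2(t)^\beta{\rm g}_2)\,dt
\]
(one of the $\phi_j$ possibly constant), with an estimate polynomial in the archimedean parameters of the $\phi_j$ so that it survives summation against $\mathcal{S}_{\infty,d}(f)$. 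Expanding each $\phi_j\circ(\iota_j(\cdot){\rm g}_j)$ into characters of the relevant class group $H$ of $E$ turns $I_{\alpha,\beta}(\phi_1,\phi_2)$ into $\sum_{\chi_1^\alpha\chi_2^\beta=1}P(\phi_1,\overline{\chi_1})\,P(\phi_2,\overline{\chi_2})$, a sum over a subgroup of $\widehat H\times\widehat H$, where $P(\phi_j,\cdot)$ denotes the toric period. Here the hypothesis that ${\rm Cl}_E$ has no $p$-torsion for $p\mid 2\beta$ enters: it forces $\gcd(\beta,|H|)=1$, so this subgroup is the graph $\chi_2=\chi_1^{-\alpha\overline\beta}$ (with $\beta\overline\beta\equiv1\pmod{|H|}$) and the double sum collapses to
\[
I_{\alpha,\beta}(\phi_1,\phi_2)=\sum_{\chi\in\widehat H}P(\phi_1,\overline\chi)\,P(\phi_2,\chi^{\alpha\overline\beta}).
\]
In the cross terms where $\phi_1\equiv1$ or $\phi_2\equiv1$ the same hypothesis shows the relevant subsum is either the single term $P(\phi_2,1)$ or a sum over the $D^{o(1)}$ characters killed by $\alpha$, and in either case it is $\ll D^{-1/8+o(1)}$ under GRH (via the subconvex bounds of Michel--Venkatesh), hence negligible.

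The analytic core is to bound the displayed sum over $\widehat H$. Waldspurger's formula gives $|P(\phi_j,\chi)|^2=c_{\phi_j}\,|H|^{-1}\,L(1/2,\pi_j\times\pi_\chi)\prod_v(\text{local factors})$, where $\pi_j$ is the Jacquet--Langlands transfer of the representation generated by $\phi_j$, $\pi_\chi$ is the theta series of $\chi$, and $c_{\phi_j}$ depends only on $\phi_j$; one checks that the product of the finite local factors at the primes dividing $D$ stays $O(1)$ (using optimality of the embedding; this reduces to the coprime case $(D,N_j)=1$), and that if the common root number of $L(s,\pi_j\times\pi_\chi)$ is $-1$ for every $\chi\in\widehat H$ then $I_{\alpha,\beta}(\phi_1,\phi_2)$ vanishes identically. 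Outside that degenerate case all central values are $\geq0$, and substituting Waldspurger's formula gives
\[
\big|I_{\alpha,\beta}(\phi_1,\phi_2)\big|\;\leq\;\sum_{\chi\in\widehat H}\big|P(\phi_1,\overline\chi)\big|\,\big|P(\phi_2,\chi^{\alpha\overline\beta})\big|\;\ll_{\phi_1,\phi_2}\;\frac1{|H|}\sum_{\chi\in\widehat H}\sqrt{L(1/2,\pi_1\times\pi_\chi)\,L\big(1/2,\pi_2\times\pi_{\chi^{\alpha\overline\beta}}\big)},
\]
so that Theorem~\ref{thm2} follows once this average is shown, under GRH, to be $O_\varepsilon((\log D)^{-1/4+\varepsilon})$; this is (a case of) Theorem~\ref{thm3}.

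For the latter I would run Soundararajan's method. Under GRH one has a pointwise bound $\log L(1/2,\pi_j\times\pi_\chi)\le\Re\sum_{N\mathfrak p\le x}\frac{\lambda_{\pi_j}(\mathfrak p)\,\chi(\mathfrak p)}{(N\mathfrak p)^{1/2}}\,w(\mathfrak p)+A\,\frac{\log D}{\log x}+O(1)$ by a short Dirichlet polynomial over prime ideals of $\mathcal{O}_E$ of norm $\le x$ (the squares $\mathfrak p^2$ producing a mean shift), so with $x=D^{1/B}$ the integrand $\sqrt{L(1/2,\pi_1\times\pi_\chi)\,L(1/2,\pi_2\times\pi_{\chi^{\alpha\overline\beta}})}$ is $\ll_B\exp\!\big(P_1(\chi)+P_2(\chi^{\alpha\overline\beta})\big)$ for short Dirichlet polynomials $P_1,P_2$. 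Averaging over $\chi\in\widehat H$ and expanding the exponentials, the key input is the orthogonality $|H|^{-1}\sum_{\chi}\chi(\mathfrak a)=\mathbf{1}[\mathfrak a\text{ is principal}]$: the diagonal contributions rebuild $\exp\!\big(\tfrac12{\rm Var}(P_1)+\tfrac12{\rm Var}(P_2)\big)$, while the mixed contributions force an ideal $\mathfrak p_1^{a_1+(\alpha\overline\beta)b_1}\cdots$ with exponents not all zero to be principal. This is where $\alpha\neq\beta$ is decisive: since $\alpha\overline\beta\not\equiv\pm1\pmod{|H|}$ for $D$ large and (again by the no-$p$-torsion hypothesis) ${\rm Cl}_E$ has few prime classes of small order, these coincidences are too rare to matter, so that ${\rm Cov}\big(P_1(\chi),P_2(\chi^{\alpha\overline\beta})\big)=o(\log\log D)$. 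Plugging in ${\rm Var}(P_j)=\tfrac14\log\log D+o(\log\log D)$ and $\mathbb{E}[P_j]=-\tfrac14\log\log D+o(\log\log D)$ — the latter calibrated against the known first-moment asymptotic $\sum_{\chi}L(1/2,\pi_j\times\pi_\chi)\sim c(\phi_j)|H|$ — one obtains $|H|^{-1}\sum_\chi\sqrt{L_1L_2}\le\exp\!\big(-\tfrac14\log\log D+o(\log\log D)\big)=(\log D)^{-1/4+o(1)}$.

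The main obstacle is precisely this last step: carrying out the moment computation with enough uniformity — bounding the length of the exponential expansion relative to the short range $x=D^{1/B}$, making the decoupling coming from $\alpha\neq\beta$ quantitative, and keeping the constants in the mean and variance exact so as to land precisely on the exponent $-\tfrac14$ — which is the substance of Theorem~\ref{thm3} and of the probabilistic machinery developed in Section~\ref{sec2}. By comparison, the reduction steps above, the verification that the local factors in Waldspurger's formula stay bounded, and the check that $I_{\alpha,\beta}(\phi_1,\phi_2)$ depends polynomially on the archimedean parameters of the $\phi_j$ (hence that the spectral sum is summable against $\mathcal{S}_{\infty,d}(f)$ for a suitable $d$), are routine.
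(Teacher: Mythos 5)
Your proposal follows the same overall architecture as the paper's proof: spectral decomposition via Weyl's criterion, Fourier inversion over the dual of the (Arakelov) class group, triangle inequality to pass to absolute values, Waldspurger's formula to convert twisted toric periods into central $L$-values, and finally reduction to the fractional moment estimate of Theorem~\ref{thm3}(b), proved by Soundararajan's method. The heuristic you give for the exponent $-1/4$ (with $\mathbb{E}[P_j]\approx -\tfrac14\log\log D$, ${\rm Var}(P_j)\approx\tfrac14\log\log D$) is also in line with Section~\ref{sec5}.

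However, there is a real issue with your parametrization after the collapse: you write the solution set of $\chi_1^\alpha\chi_2^\beta=1$ as the graph $\chi_2=\chi_1^{-\alpha\overline\beta}$ with $\beta\overline\beta\equiv 1\pmod{|H|}$, and then land on the average of $\sqrt{L(1/2,\pi_1\times\chi)\,L(1/2,\pi_2\times\chi^{\alpha\overline\beta})}$. This has two problems. First, when $E$ is real, $\widetilde{\rm Cl}_E^\vee\simeq {\rm Cl}_E^\vee\times\Z$ has an infinite factor, so $\overline\beta$ is not defined on the whole dual; only those $\chi_1$ whose $\Z$-component is divisible by $\beta/\gcd(\alpha,\beta)$ actually contribute. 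Second, and more importantly, the exponent $\alpha\overline\beta$ depends on $|{\rm Cl}_E|$, hence on $D$; Theorem~\ref{thm3}(b) is stated for fixed exponents $\alpha,\beta$ with implied constant depending on them (the constraint $x^{2k}<(D/4)^{1/\max(\alpha,\beta)}$ in Lemma~\ref{lemma12} degrades badly if the exponent grows with $D$), so it cannot be invoked as a black box with $(1,\alpha\overline\beta)$. The paper's resolution is to write $\alpha=\alpha'\delta$, $\beta=\beta'\delta$ with $(\alpha',\beta')=1$ and use the no-$\beta$-torsion to deduce $\chi=\psi^{\beta'}$; the resulting sum $\sum_\psi\mathcal{P}^{\psi^{\beta'}}_{\bT}(\phi^\circ_{\sigma_1})\overline{\mathcal{P}^{\psi^{\alpha'}}_{\bT}(\phi^\circ_{\sigma_2})}$ has the $D$-independent exponents $\alpha',\beta'$ needed for Theorem~\ref{thm3}(b). (On the finite part your expression agrees with the paper's after a reindexing using $\gcd(\alpha',|{\rm Cl}_E|)=1$, but you would need to say this and handle the real case separately.) A smaller point: in the cross terms where one $\phi_j$ is constant, the characters killed by $\alpha$ need not number $D^{o(1)}$ — $|{\rm Cl}_E[\alpha]|$ can be as large as a positive power of $D$ — so that step as written does not give the bound you claim; the paper instead appeals directly to (sparse) Duke's theorem, and the footnote's symmetry in $\alpha,\beta$ is what one leans on to make those terms come out. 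Finally, the root-number remark is harmless but unnecessary: nonnegativity of $L(1/2,\pi_j\times\chi)$ already follows from the Waldspurger period formula.
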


%In the case that $\alpha=1,\beta=2$, with $\bB_1=\bB^{(2,\infty)}$ and $\bB_2=M_2$, Theorem \ref{thm2} returns the main theorem of \cite{AES} (for the discrete spectrum).

Several assumptions in Theorems \ref{thm1} and \ref{thm2} can be relaxed, using the same methods, but at the cost of a greater technical effort. For instance, instead of maximal orders one can take Eichler orders (and even more general orders, such as in \cite[Example 10.5]{ALMW}), the difficulty being in treating oldforms. %and treat the oldforms similarly as in \cite{Ma}. 
The assumption in Theorem \ref{thm2} on the $2\beta$ torsion can also be relaxed, for instance it would suffice for its cardinality to be bounded independently of ${ D}$, cf.\   Section \ref{sec:Parseval} and Remark \ref{rem6}.  Note that if $E$ is real, a conjecture of Gau{\ss} (quantified by Hooley \cite{Hoo}) says that for ``many'' discriminants the group $\rm Cl_E$ is trivial, in particular torsionfree.  

\subsection{Beyond sparse equidistribution}
As discussed in the opening paragraphs, the modern approach to proving Duke's theorem passes through Waldspurger's formula, which relates the square of the toric period of an automorphic form to an associated $L$-function. A separate problem is then to prove subconvex bounds on these $L$-functions, a time-honored subject in analytic number theory. In fact, one can consider a natural refinement of Duke's theorem, where one seeks to prove the equidistribution of the orbit of a subgroup of the class group ${\rm Cl}_E$ of large enough index. This type of problem has been referred to in the literature as \textit{sparse equidistribution} \cite{Ven}, and it is solved by again appealing to Waldspurger's formula and subconvex bounds, this time on $L$-functions twisted by class group characters \cite{Mi}.

A fundamental property underlying the proof of Waldspurger's formula is the fact that the subgroup pair $(\bT,\bPB^\times)$ is a strong Gelfand pair. This is not the case with $(\Delta\bT,\bPB_1^\times\times \bPB_2^\times)$, as the diagonal torus $\Delta\bT$ is too small relative to the product group $\bPB_1^\times\times \bPB_2^\times$, and one no longer expects the corresponding diagonal period  to be directly related to a single $L$-function. Following Bernstein and Reznikov (see \cite{Rez} for an overview) one can nevertheless form the following \textit{Gelfand formation}:
\[
\begin{tikzcd}
 \bPB_1^\times\times \bPB_2^\times\ar[d,dash]  \\
\bT\times \bT \ar[d,dash] \\
\Delta \bT
\end{tikzcd}
\]
in which the intermediate subgroup pairs $(\Delta\bT,\bT\times\bT)$ and $(\bT\times\bT,\bPB_1^\times\times \bPB_2^\times)$ are strong Gelfand pairs. In such a situation the diagonal period should be related to a \textit{family of twisted $L$-functions}. We establish this link more precisely in Section \ref{sec2}.

The equidistribution problems of Theorems \ref{thm1} and \ref{thm2} therefore go beyond even the sparse equidistribution refinements of Duke's theorem. From this perspective it is perhaps less surprising that one should need the deeper statistical information provided by GRH, which we take as a working assumption. The analytic tools we develop for families of $L$-functions, as expressed in our main analytic number theoretic result Theorem \ref{thm3}, should (we believe) provide a new paradigm for treating equidistribution problems in the absence of direct period formulae.

\subsection{The plan of the paper} In Section \ref{sec:adelic-to-classical} we take some time to translate the content of Subsections \ref{sec:M-V} and  \ref{main} in classical language and in particular discuss general versions of Duke's theorem. This prepares the ground to give classical applications of Theorems \ref{thm1} and \ref{thm2} in Section \ref{sec:examples}
\begin{itemize}
\item on simultaneous equidistribution on pairs of quadrics,
\item on simultaneous equidistribution by genus classes, 
\item and on simultaneous supersingular reduction of CM elliptic curves.
\end{itemize}
Section \ref{sec2} uses Waldspurger's theorem and Parseval to reduce the proof of Theorems \ref{thm1} and \ref{thm2} to the proof of Theorem \ref{thm3}, a mean value estimate for fractional moments of twisted $L$-functions. It offers an independent consequence, stated as Corollary \ref{cor4}, on Bessel periods of Yoshida lifts. Before we start with the proof of Theorem \ref{thm3}, we give a heuristic argument in Section  \ref{sec5}. Section  \ref{sec6} compiles general results on $L$-functions. The combinatorial input of the proof of Theorem \ref{thm3} is provided in Section  \ref{sec7}, while the analytic input is the content of Sections  \ref{sec8} and  \ref{sec9}. \\

\noindent \textbf{Acknowledgements:} We would like to thank Gergely Harcos, Kimball Martin, and Abhishek Saha  for helpful comments.

\section{Converting from adelic to classical language}\label{sec:adelic-to-classical}

We begin by converting from the adelic language in which we have expressed Theorems \ref{thm1} and \ref{thm2} to more classical language. Throughout this section we shall give ourselves only \textit{one} quaternion algebra; concrete examples of Theorems \ref{thm1} and \ref{thm2} with two (distinct) quaternion algebras will be given in Section \ref{sec:examples}.

The following notation will be in place: let $\bB$ be a quaternion algebra over $\Q$. Let $\bPB^\times$ be its group of projective units. Let $\mathscr{O}\subset\bB(\Q)$ be a maximal order and write $\bPB^\times(\Z)$, resp. $\bPB^\times(\widehat\Z)$ for the projective unit group of $\mathscr{O}$, resp. $\widehat{\mathscr{O}}=\mathscr{O}\otimes\widehat\Z$.  Write $K=\bPB^\times(\widehat\Z)K_\infty$, where $K_\infty$ is a maximal compact torus of $\bPB^\times(\R)$.

\subsection{Viewing $[\bT]_K$ classically}\label{sec:ACG}

Let $E$ be quadratic field extension of $\Q$, with ring of integers $\mathcal{O}_E$, which is not split wherever $\bB$ is ramified. This condition assures that $E$ embeds into $\bB(\Q)$  \cite[Prop.\ 14.6.7]{Vo}. Let $\bT=({\rm Res}_{E/\Q}\mathbb{G}_m)/\mathbb{G}_m$ and let $\iota:\bT\hookrightarrow\bPB^\times$ be an optimal embedding of $\mathcal{O}_E$ into $ \mathbb{O}_\iota$.  Denote by $K_{\bT_\iota,\infty}$ the maximal compact subgroup of $\bT_\iota(\R)$.

The map 
\[
\bT_\iota(\A_{\Q})\rightarrow [\bT_\iota.{\rm g}]_K, \quad t\mapsto \bPB^\times(\Q) t{\rm g}K,
\]
induces a  bijection   $\bT_\iota(\Q)\bs\bT_\iota(\A_\Q)/(\bT_\iota(\A_\Q)\cap {\rm g}K{\rm g}^{-1}) \xrightarrow{\sim} [\bT_\iota.{\rm g}]_K$. 
By \eqref{Tf-optimality} and the hypothesis (which can be deduced from \eqref{conjugation-Tinfty}) that $\bT_\iota(\R)\cap g_\infty^{-1}K_\infty g_\infty=K_{\bT_\iota,\infty}$, the preceding adelic double quotient may be re-written as $\bT_\iota(\Q)\bs \bT_\iota(\A_\Q)/\iota(\bT(\widehat{\Z}))K_{\bT_\iota,\infty}$. 

We now observe that the latter group can naturally  be identified with the \textit{Arakelov class group} $\widetilde{\rm Cl}_E$, in the sense of \cite{EV}, of the ring of integers $\mathcal{O}_E$ of $E$. Indeed, we put $\widetilde\bT={\rm Res}_{E/\Q}\mathbb{G}_m$ and let $K_{\widetilde\bT,\infty}$ denote the maximal compact subgroup of $\widetilde\bT(\R)$ and recall that
$$
\widetilde{\rm Cl}_E=E^\times\bs\A_E^\times/\widehat{\mathcal{O}}_E^\times K_{\widetilde{\bT},\infty}$$
 is the usual class group ${\rm Cl}_E$ of $E$ if $E$ is imaginary, and  the extension of ${\rm Cl}_E$ by the circle $ \R^\times\bs E_{\infty}^\times/\mathcal{O}_E^{\times}$ if $E$ is real. We see that 
\begin{equation}\label{eq:cl-gp}
\widetilde{\rm Cl}_E=\widetilde\bT(\Q)\bs \widetilde\bT(\A_\Q)/\widetilde\bT(\widehat{\Z})K_{\widetilde\bT,\infty}\xrightarrow{\sim} \bT_\iota(\Q)\bs \bT_\iota(\A_\Q)/\iota(\bT(\widehat{\Z}))K_{\bT_\iota,\infty}  \cong [\bT_{\iota}.g]_K\end{equation}
the middle arrow being an isomorphism, since the kernel is $\Q^\times\bs\A_f^\times/\widehat{\Z}^\times$ and $\Q$ has class number 1. 

We now put a finite measure on $\widetilde{\rm Cl}_E$ whose volume behaves regularly in the discriminant of $E$. If $E$ is real, we have
\[
1\rightarrow \mu_2\rightarrow \bT(\R)=\R^\times\bs E_\infty^\times\xrightarrow{\log |x_1/x_2|}\R\rightarrow 1
\]
and $\mathcal{O}_E^\times\simeq \mu_2 \times \langle\log \epsilon\rangle$, where $\epsilon>1$ is a totally positive fundamental unit. We deduce an isomorphism between $\bT(\R)/\mathcal{O}_E^\times$ and $\R/\langle\log \epsilon\rangle$, with which we transport the Lebesgue measure on the circle of arclength $\log \epsilon$ to $\bT(\R)/\mathcal{O}_E^\times$. Using the counting measure on ${\rm Cl}_E$ we obtain a measure on $\widetilde{\rm Cl}_E$ with total volume
\[
{\rm vol}(\widetilde{\rm Cl}_E)=
\begin{cases}
|{\rm Cl}_E|, & E\; \textrm{ imaginary quadratic};\\
|{\rm Cl}_E| \log\epsilon, & E\; \textrm{ real quadratic}.
\end{cases}
\]
 Let $\eta_E$ be the quadratic character of conductor ${D}$ associated to $E/\Bbb{Q}$ by class field theory. By the Dirichlet class number formula we have
\begin{equation}\label{vol1}
\text{vol}(\widetilde{\rm Cl}_E) =cL(1, \eta_E) {D}^{1/2},
\end{equation}
where $c>0$ is an absolutely bounded constant depending only on the signature of $E$ at infinity and the number of roots of unity of $\mathcal{O}_E$. 

Since $\widetilde{\rm Cl}_E$ is compact, its dual $\widetilde{\rm Cl}_E^\vee$ is discrete: finite and equal to ${\rm Cl}^{\vee}_E$ if $E$ is imaginary, and infinite and isomorphic to ${\rm Cl}^{\vee}_E \times \Bbb{Z}$ if $E$ is real. \color{black}

\subsection{Viewing the discriminant $D$ classically}\label{sec:discriminant}
Next we show that the discriminant $D$ of the homogeneous torus subset $[\bT_\iota.{\rm g}]$ is the absolute value of the discriminant of $E$. We shall use an equivalent description of $D=\prod_v D_v$ from \cite[\S 4.2]{ELMV2} as described in \cite[\S 2.4.4]{Kh}.  

\medskip

\noindent\textit{The discriminant at finite places.}
 In this case $D_v$ is the discriminant of the maximal quadratic order $\iota(\mathcal{O}_{E,v})=\iota(E_v)\cap {\rm g}_v^{-1}\mathscr{O}_v {\rm g}_v$ inside $\iota(E_v)$, where we have used the optimality assumption \eqref{eq:optimal}. The latter discriminant is equal to  the discriminant of $\mathcal{O}_{E,v}$ inside $E_v$.

\medskip

\noindent\textit{The discriminant at the archimedean place.} 

\medskip

\noindent\textit{When $\bB(\R)$ is indefinite.} When $\bB(\R)$ is indefinite, we follow \cite[\S 6.1]{ELMV2}, which explicates the case of $\PGL_n(\R)$. We use the fixed isomorphism of $\bPB^\times(\R)$ with $\PGL_2(\R)$ from Section \ref{main} to identify the Lie algebra of $\bPB^\times(\R)$ with the quotient $\gf=M_2(\R)/\R$. Let $\|\cdot\|_\infty^2$ denote the norm on $\mathfrak{g}$ which descends from the norm ${\rm tr}(X^2)/2$ on $M_2(\R)$.

For any quadratic \'etale subalgebra $\tilde{\mathfrak{h}}$ of $M_2(\R)$ let $\{1,\tilde{f}\}$ be an $\R$-basis for $\tilde{\mathfrak{h}}$ which is orthonormal with respect to $\|\cdot\|_\infty$. Let $\mathfrak{h}$ be the image of $\tilde{\mathfrak{h}}$ in $\gf$ and $f$ the image of $\tilde{f}$ in $\gf$. Then we put $D_\infty(\mathfrak{h})=\|f\|_\infty^{-2}$. Note that when $\mathfrak{h}=\mathfrak{a}$ is the diagonal subalgebra of $\gf$ or when $\mathfrak{h}=\mathfrak{k}$ is the Lie algebra of ${\rm PSO}(2)$, then $D_\infty(\mathfrak{h})=1$.

 Following \cite[p.\ 841]{ELMV2} we put $D_\infty=D_\infty({\rm g}_\infty \iota(E_\infty){\rm g}_\infty^{-1})$. From \eqref{conjugation-Tinfty}, and the above remark, we see that $D_\infty=1$.

\medskip

 \noindent\textit{When $\bB(\R)$ is definite.} When $\bB(\R)$ is definite we follow the discussion in \cite[p.\ 166]{Kh}. We let $\|\cdot \|_\infty^2$ be the reduced norm ${\rm Nm}$ on $\bB(\R)$ and write
\[
\mathscr{O}_\infty=\{g \in\bB(\R): \|g\|_\infty\leq 1\}.
\]
We fix the volume form ${\rm vol}_\infty$ on ${\rm g}_\infty \iota(E_\infty){\rm g}_\infty^{-1}$ induced by the metric $|\cdot |_\infty$ for which ${\rm g}_\infty\bT(\R){\rm g}_\infty^{-1}$ acts by isometries, and normalized so that the unit disc is of volume 1. Let 
\[
\Lambda_\infty={\rm g}_\infty \iota(E_\infty) {\rm g}_\infty^{-1}\cap \mathscr{O}_\infty.
\]
Then we set ${D}_\infty={\rm vol}_\infty(\Lambda_\infty)^2$.

 We now calculate $D_\infty$, given the above choice of data. The essential point is that $K_\infty$ preserves $\|\cdot \|_\infty$, since it sits inside the projective image in $\bPB^\times(\R)$ of $\mathscr{O}_\infty^\times=\{g\in\bB^\times(\R): \|g\|_\infty=1\}$, which acts by conjugation on $\bB(\R)$ as the full group of orientation preserving isometries. 
From this it follows, using \eqref{conjugation-Tinfty}, that ${\rm g}_\infty\bT(\R){\rm g}_\infty^{-1}$ preserves $\|\cdot\|_\infty$, so that that the restriction of $\|\cdot \|_\infty$ to ${\rm g}_\infty \iota(E_\infty){\rm g}_\infty^{-1}$ is $|\cdot |_\infty$. Thus $\Lambda_\infty$ is the unit disc for $|\cdot |_\infty$, proving $D_\infty=1$.

\subsection{Viewing Duke's theorems classically}\label{sec:unions-quadrics} 

We now explicate Duke's theorems, converting from the adelic language of Section \ref{sec:M-V} to the classical arithmetic setting of integral points on (unions of) quadrics, as in the papers of Linnik \cite{Li1} and Skubenko \cite{Sk}. This will be helpful for generating examples of Theorems \ref{thm1} and \ref{thm2} in the next section.

Let $Q={\rm Nm}|_{\bB^0}$ be the restriction of the reduced norm ${\rm Nm}$ to the trace zero quaternions $\bB^0$. For a non-zero integer $d$ let $\mathbf{X}_{Q,d}$ denote the level set $\{x\in\bB^0: Q(x)=d\}$. This is an affine $\bPB^\times$-variety, under the action of conjugation.

Let $\mathbb{D}(Q)$ denote the set of non-zero integers which are everywhere locally integrally represented by $Q$, and let $\mathbb{D}_{\rm fund}(Q)=\mathbb{D}(Q)\cap\mathbb{F}$. For $d\in\mathbb{D}_{\rm fund}(Q)$ we let $E=\Q(\sqrt{-d})$ and choose an optimal embedding $\iota$ of  $\mathcal{O}_E$ into a maximal order $\mathbb{O}$ of $\bB$,  as in Section \ref{main}. Since $\iota$ preserves the trace and the norm, the point $$x_0=\iota(\sqrt{-d})$$ lies in $\mathbf{X}_{Q,d}(\Q)$. Note that the stabilizer of $x_0$ in $\bPB^\times$ consists of (projective) invertible elements of the form $a+bx_0$; this stabilizer is then seen to be $\bT_\iota=\iota(\bT)$, where $\bT=({\rm Res}_{E/\Q}\mathbb{G}_m)/\mathbb{G}_m$. By Witt's theorem we have $\mathbf{X}_{Q,d}(\Q)=\bPB^\times (\Q).x_0$, so that $\mathbf{X}_{Q,d}(\Q)$ is identified with the quotient $\bPB^\times (\Q)/\bT_\iota(\Q)$ through the orbit map on $x_0$.

We shall be interested in the distribution of the \textit{integral points} of $\mathbf{X}_{Q,d}$. To this end, let $\bPB^\times(\Q)\bs\bPB^\times(\A_f)/\bPB^\times(\widehat{\Z})$ be the \textit{class set} of $\bPB^\times$, for which we fix representatives $\{b_1=[e],\ldots ,b_h\}$. Then $\{ \mathbb{O}_i=b_i \mathbb{O} b_i^{-1}\cap\bB(\Q)\}$ forms a complete set of representatives for the $\bPB^\times(\Q)$-conjugacy classes of maximal orders of $\bB(\Q)$. When $Q$ is indefinite, Eichler's theorem (along with the fact that $\Q$ has class number one) implies that the class set of $\bPB^\times$ is a singleton. 

Define the lattice $\Lambda_i= \mathbb{O}_i\cap\bB^0(\Q)$ and let $d_i={\rm disc}(\Lambda_i)$ be its discriminant. The restriction of $d_i.Q$ to $\Lambda_i$ yields integral ternary quadratic forms $q_i$, forming a full $\bPB^\times$-genus class \cite[Ch.\ 22]{Vo}, which we denote by ${\rm Gen}(Q)$. Let $\mathscr{R}_d(q_i)=\bPB^\times (\Q).x_0\cap \Lambda_i$ be the $\Lambda_i$-integral points of $\mathbf{X}_{Q,d}$, the set of all $\Lambda_i$-integral representations of $d$ by $q_i$. Considering them all together yields
\[ 
\mathscr{R}_d({\rm gen}_Q)=\coprod_{q_i\in {\rm Gen}(Q)} \mathscr{R}_d(q_i).
\]

The \textit{equidistribution problems of Linnik's type} are the study of the distribution of the projection $|d|^{-1/2}\mathscr{R}_d({\rm gen}_Q)$ on the union of quadrics
\begin{equation}\label{union-of-quadrics}
\coprod_{{\rm Gen}(Q)} \mathbf{X}_{Q,{\rm sgn}(d)}(\R),
\end{equation}
as $d\rightarrow\pm\infty$ in $\mathbb{D}_{\rm fund}(Q)$, where ${\rm sgn}(d)$ is $+1$ or $-1$ according to the sign of $d$. To formulate this precisely one must prescribe the relevant measures. The quadric $\mathbf{X}_{Q,{\rm sgn}(d)}(\R)$ has a unique (up to non-zero scaling) $\bPB^\times(\R)$-invariant measure coming from its structure as a homogeneous space for $\bPB^\times(\R)$. When $Q$ is indefinite, $\mathbf{X}_{Q,{\rm sgn}(d)}(\R)$ is non-compact and the measure will then be infinite. In this case, we let $\mu_{Q,{\rm sgn}(d)}$ be any such choice, and equidistribution relative to this measure is taken in the sense of \cite[(1.1)]{MV}. In the definite case we let $\mu_{Q,{\rm sgn}(d)}$ be the probability measure which assigns to the $i$th copy of $\mathbf{X}_{Q,{\rm sgn}(d)}(\R)$ the $\bPB^\times(\R)$-invariant measure with volume $|{\rm Aut}(q_i)|^{-1}$.

Then Duke's theorem \cite{Du, DSP} states that $d^{-1/2}\mathscr{R}_d({\rm gen}_Q)$ equidistributes on \eqref{union-of-quadrics} relative to $\mu_{Q,{\rm sgn}(d)}$ as $d\rightarrow\pm\infty$ in $\mathbb{D}_{\rm fund}(Q)$. In particular (in the definite case), every large enough $d\in\mathbb{D}_{\rm fund}(Q)$ is integrally represented by every genus member, solving (up to issues of effectivity) the last remaining case of Hilbert's 11th problem over $\Q$ (see \cite{BH} for the number field case). 

\subsection{Finiteness of equivalence classes}

In this subsection we explicate the action of the class group of $\bT_\iota$ on certain equivalence classes of integral representations $\mathscr{R}_d({\rm gen}_Q)$.

We write $\Gamma=\bPB^\times(\Z)$. Then ${\rm Gen}(\Gamma)=\{\Gamma_1=\Gamma,\ldots ,\Gamma_h\}$, where $\Gamma_i$ denote the projective units of the maximal order $\mathscr{O}_i$, is the \textit{genus class} of $\Gamma$. Since $\Lambda_i$ is stable under the action of $\Gamma_i$ by conjugation, we may form the quotient $\mathscr{R}^*_d(q_i)=\Gamma_i\backslash\mathscr{R}_d(q_i)$ and put
\[
\mathscr{R}_d^*({\rm gen}_Q)=\coprod_{q_i\in {\rm Gen}(Q)} \mathscr{R}^*_d(q_i). 
\]
We now give an adelic parametrization of $\mathscr{R}_d^*({\rm gen}_Q)$, for $d\in\mathbb{D}_{\rm fund}(Q)$. This will show, in particular, that $\mathscr{R}_d^*({\rm gen}_Q)$ is finite, a fact which is not \textit{a priori} evident when $\bB$ is indefinite.

To this end, we introduce purely local analogues of the global problem of parametrizing $\Lambda$-integral points on $\mathbf{X}_d$, where $\Lambda=\Lambda_1$. Namely, with $q=q_1$ and $\widehat{\Lambda}=\Lambda\otimes_\Z\widehat\Z$, we put
\[
\bPB^\times(\A_f)^{\rm int}=\{g\in\bPB^\times(\A_f): g.x_0\in\widehat{\Lambda}\}.
\]
In this way, the local analogues $\bPB^\times(\A_f).x_0\cap \widehat{\Lambda}$ and $\bPB^\times(\widehat{\Z})\bs (\bPB^\times(\A_f).x_0\cap \widehat{\Lambda})$ are identified, under the orbit map through $x_0$, with the group quotients
\[
\bPB^\times(\A_f)^{\rm int}/\bT_\iota(\Q)\quad\textrm{ and }\quad\bPB^\times(\widehat\Z)\bs \bPB^\times(\A_f)^{\rm int}/\bT_\iota(\Q),
\]
respectively. To compute the latter, one must determine the orbit structure of  $\bG(\Z_p)=\mathscr{O}^\times_p/\{\pm 1\}$ acting by conjugation on the $\Z_p$-solutions $\mathbf{X}_d(\Z_p)=\{x_p\in\bB^0(\Z_p): {\rm Nm}(x_p)=d\}$ for every $p$. This has been done in \cite[Prop 10.1]{EMV}. In particular, for almost all $p$ the action is transitive.

With this notation in place, it follows from \cite[\S 3.2.2]{EV08} that $\mathscr{R}_d^*({\rm gen}_Q)$ is naturally identified with
\[
\bigcup_{[{\rm h}_f]\in \bPB^\times(\widehat\Z)\bs \bPB^\times(\A_f)^{\rm int}/\bT_\iota(\Q)} \bT_\iota(\Q) \bs \bT_\iota(\A_f)/\left(\bT_\iota(\A_f)\cap {\rm h}_f^{-1}\bPB^\times(\widehat{\Z}){\rm h}_f \right).
\]
Each of the above quotients is a finite group, being the class group of a quadratic order in $E$.

\begin{remark}
The lack of non-transitivity (for certain congruence classes mod   $d$) of the class group action, as described in the introduction in the context of Linnik's sphere problem, is ``explained'' by the local parametrizing set $\mathscr{R}^*_d(x^2+y^2+z^2)_{\rm loc}$. In many other sources, such as \cite{EMV}, the integral structure for $\bPB^\times$ is induced by the orthogonal group for the maximal order, rather than its projective units, via the $\Q$-isomorphism of $\bPB^\times$ with $\SO_Q$. The resulting quotient then receives a transitive action of ${\rm Cl}_E$ for \textit{all} congruence classes, but possibly with non-trivial stabilizers of finite order.
\end{remark}

Note that condition \eqref{eq:optimal} implies $x_0\in \mathbf{X}_{Q,d}(\Q)\cap {\rm g}_f^{-1} \widehat{\Lambda} {\rm g}_f$. Thus ${\rm g}_f. x_0 \in {\rm g}_f\mathbf{X}_{Q,d}(\Q){\rm g}_f^{-1}\cap  \widehat{\Lambda}$, so that ${\rm g}_f\in \bPB^\times(\A_f)^{\rm int}$. Similarly to \eqref{eq:cl-gp}, we may identity ${\rm Cl}_E$ with $\bT_\iota (\Q) \bs \bT_\iota (\A_f)/\bT_\iota(\widehat\Z)$, where $\bT_\iota(\widehat\Z)$ satisfies \eqref{Tf-optimality}. Taking $[{\rm h}_f]=[{\rm g}_f]$, we get an embedding
\begin{equation}\label{class-gp-embedding}
{\rm Cl}_E\rightarrow \mathscr{R}_d^*({\rm gen}_Q),
\end{equation}
given by the orbit map through $x_0$.

\subsection{The dual picture and the modular formulation of Duke's theorem}\label{sec:dual-picture}
When $\bB$ is indefinite, it is more convenient to formulate (and prove!) Duke's theorem using a dual formulation \cite{MV} involving packets of Heegner points or closed geodesics on a Shimura or modular curve. 

In this section we shall take $\bB$ indefinite, in which case   the genus class of $Q$ is a singleton.  In particular, one can assume that $\mathbb{O}=\mathscr{O}$ and the element ${\rm g}_f$ from Section \ref{main} can be taken to be the identity.  Let $H<\bPB^\times(\R)$ be either $K_\infty$ or the pullback of the projective diagonal matrices via the fixed isomorphism of $\bB(\R)$ with $M_2(\R)$ from Section \ref{main}. We fix $x_\infty\in\mathbf{X}_{Q,{\rm sgn}(d)}(\R)$ such that $H={\rm Stab}_{\bPB^\times(\R)}(x_0)$. We then identify $\mathbf{X}_{Q,{\rm sgn}(d)}(\R)$ with $\bPB^\times(\R)/H$ via the orbit map through $x_\infty$. The image of \eqref{class-gp-embedding} is given by a finite union of $\bPB^\times(\Z)$-orbits
\[
\coprod_{\tau\in {\rm Cl}_E} \bPB^\times(\Z) x_\tau.
\]
We project these onto the quadric $\mathbf{X}_{Q,{\rm sgn}(d)}(\R)$ by rescaling by $|d|^{-1/2}$. In this way we produce elements $g_\tau\in\bPB^\times(\R)$ satisfying $g_\tau.x_\infty=|d|^{-1/2} x_\tau$. 

The following equidistribution statements are equivalent \cite[Prop. 2.1]{ELMV1}:
\begin{enumerate}
\item \textit{(arithmetic equidistribution statement)} the finite union of left $\bPB^\times(\Z)$-orbits
\[
\coprod_{\tau\in {\rm Cl}_E} \bPB^\times(\Z) g_\tau H/H
\]
equidistribute on the quadric $\mathbf{X}_{Q,{\rm sgn}(d)}(\R)$;
\item\label{modular-statement} \textit{(modular equidistribution statement)} the finite set of right $H$-orbits
\[
\left\{ y_\tau H: \tau\in {\rm Cl}_E\right\}, \quad\text{where}\quad y_\tau=\bPB^\times(\Z)\bs \bPB^\times(\Z) g_\tau,
\]
equidistribute on $\bPB^\times(\Z)\bs\bPB^\times(\R)$.
\end{enumerate}

We remark that by \cite[\S 2.4.1]{ELMV3} the right $H$-orbits in \eqref{modular-statement} are periodic.

Let us now examine the modular equidistribution statement \eqref{modular-statement}. Let $\mathbb{H}^\pm= \mathbb{H}\cup -\mathbb{H}$ be the union of the upper and lower half-planes. We identify $\bPB^\times(\R)$ with $\PGL_2(\R)$ as in Section \ref{sec:discriminant} so that
\[
\bPB^\times(\R)/K_\infty\simeq \PGL_2(\R)/{\rm PSO}(2)\simeq \mathbb{H}^\pm.
\]
Then, with $K=\bPB^\times(\widehat{\Z}) K_\infty$, we have
\[
[\bPB^\times]_K=\bPB^\times(\Q)\bs\bPB^\times(\A_\Q)/K=\bPB^\times(\Z)\bs\bPB^\times(\R)/K_\infty\simeq X_\Gamma,
\]
where $X_\Gamma=\bPB^\times(\Z)\bs\mathbb{H}^\pm$ is a Shimura curve when $\bB$ is a division algebra and the modular curve $\PGL_2(\Z)\bs\mathbb{H}^\pm={\rm PSL}_2(\Bbb{Z}) \backslash \Bbb{H}$ when $\bB$ is the matrix algebra. In the modular statement, we may identify the quotient $\bPB^\times(\Z)\bs\bPB^\times(\R)$ with the unit tangent bundle $T^1(X_\Gamma)$ of $X_\Gamma$, equipped with the Liouville measure. However, in view of the archimedean restrictions in Theorems \ref{thm1} and \ref{thm2} we wish rather to examine \eqref{modular-statement} on $X_\Gamma$ itself.\footnote{The stronger version of Duke's original theorem, which upgrades $X_\Gamma$ to $T^1(X_\Gamma)$, was proved by Chelluri in \cite{Che}.} In this case, $X_\Gamma$ is equipped with the Poincar\'e measure, normalized to have volume one.

Now when $H=K_\infty$ is compact (so that $E=\Q(\sqrt{-d})$, with $d>0$, is imaginary quadratic), \eqref{modular-statement} is equivalent to the equidistribution of the \textit{Heegner points}
\[
\mathscr{H}_d(X_\Gamma)=\left\{z_\tau=y_\tau K_\infty/K_\infty: \tau\in {\rm Cl}_E\right\}
\]
on $X_\Gamma$. For example, when $\bB=M_2$ is the matrix algebra so that $Q(a,b,c)=b^2 - 4ac$ is the discriminant form, we obtain the Heegner points 
\[
\mathscr{H}_d({\rm PSL}_2(\Bbb{Z}) \backslash \Bbb{H})={\rm PSL}_2(\Bbb{Z})\backslash \Big \{ \frac{-b + \sqrt{-d}}{2a} : b^2 \equiv  -d\, (\text{mod } 4a)\Big\}
\]
on the modular surface ${\rm PSL}_2(\Bbb{Z}) \backslash \Bbb{H}$. When $H$ is non-compact (so that $E=\Q(\sqrt{-d})$, with $d<0$, is real quadratic), \eqref{modular-statement} states that the packet of \textit{closed geodesics}
\[
\mathscr{G}_d(X_\Gamma)=\left\{\gamma_\tau=y_\tau H/K_\infty: \tau\in {\rm Cl}_E\right\}
\] 
equidistributes on $X_\Gamma$. Taking the modular surface again as our example, each $\gamma_\tau$ is the projection to ${\rm PSL}_2(\Bbb{Z}) \backslash \Bbb{H}$ of the unique geodesic in $\mathbb{H}$ with endpoints $\frac{-b + \sqrt{-d}}{2a}$, where $b^2 \equiv  -d\, (\text{mod } 4a)$ cf. \cite[\S 2.3]{ELMV3}.

\section{Instances of Theorems \ref{thm1} and \ref{thm2}}\label{sec:examples}

We now give explicit examples of pairings which fit into the simultaneous equidistribution statement of our Theorems \ref{thm1} and \ref{thm2}.

\medskip

\noindent {\sc Example 1:} \emph{Simultaneous equidistribution on pairs of quadrics.}  We begin by giving an example with class number one algebras; we shall emphasize equidistribution across genus classes shortly.

Let $\bB_1$ be the split algebra $M_2$ and $\bB_2=\bB^{(2,\infty)}$. Any imaginary quadratic extension $E$ of $\Q$ of discriminant $-D$ with $D \not\equiv 7$ (mod 8) embeds diagonally into $\bB_1\times\bB_2$. Theorem \ref{thm1} states that, under GRH, the corresponding packets of pairings, as in \eqref{joint-orbit}, between Heegner points $\mathscr{H}_D$ on the modular curve and the integral points $D^{-1/2} \mathscr{R}_D$ on the sphere (cf.\ \eqref{11}) equidistribute, relative to the discrete spectrum,  to the product of the uniform probability measures on ${\rm SO}^+_3(\Bbb{Z})\backslash S^2\times {\rm PSL}_2(\Z)\bs\mathbb{H}$ as $D\rightarrow\infty$.

 If we replace $\bB_1 = M_2$ with an indefinite division algebra, we obtain the same result with Heegner points on a Shimura curve. Since the automorphic spectrum of the latter is discrete, the restriction to test functions in the discrete spectrum in Theorem \ref{thm1} holds automatically in this case.

\medskip

\noindent {\sc Example 2:} \emph{A variation.}  The previous example has an interesting geometric variation, treated in \cite{AES}. To each point in $ x\in \mathscr{R}_D$ we can associate the rank 2 lattice $x^\perp\cap\Z^3$; after rotating to a fixed reference plane and rescaling by the volume, it can be viewed as a Heegner point of discriminant $D$ via the isomorphism of ${\rm PSL}_2(\Z)\bs\mathbb{H}$ with isometry classes of unimodular lattices in $\R^2$. As explained in \cite[Section 5.2]{EMV}, this corresponds to the twisted diagonal embedding $\iota_{\alpha, \beta}$ with $\alpha = 1$, $\beta = 2$; indeed, the projection of the image onto the second factor meets only one coset of squares in the class group (cf.\  footnote 11 on p.\ 151 of \textit{loc.\ cit.}). We may deduce from Theorem \ref{thm2} that, under GRH, the set of pairs $$(x/\|x\|,x^\perp\cap\Z^3)$$ equidistributes (relative to the discrete spectrum) to the product of the uniform measures on ${\rm SO}^+_3(\Bbb{Z})\backslash S^2\times {\rm PSL}_2(\Z)\bs\mathbb{H}$ as $D\rightarrow\infty$ through prime discriminants (so that by Gau{\ss}' genus theory the 2-torsion is trivial). 

\begin{remark}
This orthogonal complement construction works in greater generality. For instance, as explained in Section \ref{sec:dual-picture}, if $\bB=M_2$, a primitive integral element $x$ on the discriminant variety $\mathbf{X}_{Q,d}$, associated with the determinant form $Q(a,b,c)=b^2-4ac$, corresponds to a Heegner point or a closed geodesic on the modular curve, according to whether $d<0$ or $d>0$, respectively. The restriction of $Q$ to $\langle x\rangle^{\perp}$ then has signature $(2, 0)$ if $d < 0$ and signature $(1, 1)$ if $d > 0$, yielding a positive, respectively indefinite, binary lattice of discriminant $d$. 
\end{remark}

\medskip

\noindent {\sc Example 3:} \emph{Simultaneous representation by genus classes.} %\label{explicit-genus-classes}
 Next we assume $\bB$ is definite and let $K_\infty=\bPB^\times(\R)$. In this case the adelic double quotient $[\bPB^\times]_K$ is simply a finite union of singletons, indexed by the class set of $\bPB^\times$. We give two  very concrete examples of Theorem \ref{thm1} in this setting in this and the following subsection. 

Let $\bB_1=\bB^{(11,\infty)}$ and $\bB_2=\bB^{(19,\infty)}$, both genus 2 definite quaternion algebras. For $i=1,2$, let $Q_i={\rm Nm}|_{\bB_i^0}$ be the trace zero norm form and choose two non-conjugate maximal orders $ \mathbb{O}_i^{(1)},  \mathbb{O}_i^{(2)}$ in $\bB_i(\Q)$. Restricting $Q_i$ to the trace zero dual lattices $\Lambda_i^{(1)}, \Lambda_i^{(2)}$ yields, as in Section \ref{sec:unions-quadrics}, a pair of genus equivalent,  integral, positive definite ternary quadratic forms $Q_i^{(1)}, Q_i^{(2)}$. Explicitly, we have
\[
{\rm gen}_{Q_1}=\{Q_1^{(1)},Q_1^{(2)}\}:\quad\begin{cases}
Q_1^{(1)}=3x^2+11y^2+xz+z^2,& |{\rm Aut}(Q_1^{(1)})|=4;\\
Q_1^{(2)}=3x^2+4y^2+4z^2+2xy+2xz - 3 yz,& |{\rm Aut}(Q_1^{(2)})|=6,
\end{cases}
\]
and
\[
{\rm gen}_{Q_2}=\{Q_2^{(1)},Q_2^{(2)}\}:\quad\begin{cases}
Q_2^{(1)}=x^2+5y^2+19z^2 +xy,& |{\rm Aut}(Q_2^{(1)})|=8;\\
Q_2^{(2)}=4x^2+5y^2+6z^2 +5yz + xz + 2xy,& |{\rm Aut}(Q_2^{(2)})|=4.
\end{cases}
\]
We define probability measures $m_i$ on ${\rm gen}_{Q_i}$ by weighting each form by the reciprocal of the order of its automorphism group. Thus
\[
m_1=\frac{3}{5}\delta_{Q_1^{(1)}}+\frac{2}{5}\delta_{Q_1^{(2)}}\quad\text{and}\quad m_2=\frac{1}{3}\delta_{Q_2^{(1)}}+\frac{2}{3}\delta_{Q_2^{(2)}}.
\]

It follows from the theorem of Duke and Schulze-Pillot \cite{DSP} that any large enough $d\in\mathbb{D}_{\rm fund}(Q_i)$ is integrally represented both by $Q_i^{(1)}$ and $Q_i^{(2)}$, and that the relative share of such representations as $d\rightarrow\infty$ is governed by $m_i$. Theorem \ref{thm1} states that, under GRH, any large enough $d\in\mathbb{D}_{\rm fund}(Q_1)\cap\mathbb{D}_{\rm fund}(Q_2)$ is represented \textit{simultaneously} by every pair $(Q_1^{(i)},Q_2^{(j)})$, and that the relative share of such representations as $d\rightarrow\infty$ is governed by the product measure $m_1\times m_2$.

\medskip

\noindent {\sc Example 4:} \emph{Simultaneous supersingular reduction of CM elliptic curves.} While the above example featured two explicit definite quaternion algebras ramified at a single prime, this was only for computational simplicity. By contrast, the next example exploits the special arithmetic significance of the quaternion algebra $\bB^{(p,\infty)}$. 

In this case, the class set of $\bB^{(p,\infty)}$ can be identified with the set $\mathscr{E}_p^{\rm ss}$ of isomorphism classes of supersingular elliptic curve defined over $\overline{\Bbb{F}}_p$. Under this identification, $\mathscr{E}_p^{\rm ss}$ is endowed with a natural probability measure $m_p$ which, as in the previous example assigns to each $e\in \mathscr{E}_p^{\rm ss}$ a weight proportional to $| \text{End}(e)^\times|^{-1}$.

Let $E$ be an imaginary quadratic with ring of integers $\mathcal{O}_E$ such that $p$ is inert in $E$; then $E$ embeds into $\bB^{(p,\infty)}$. Let ${\rm Ell}_{\mathcal{O}_E}^{\rm CM}$ be the set of elliptic curves defined over $\Bbb{C}$ with complex multiplication by $\mathcal{O}_E$. Any $e\in {\rm Ell}_{\mathcal{O}_E}^{\rm CM}$ is defined over the Hilbert class field $H_E$. Fix a prime $\mathfrak{p} \mid p$ of $H_E$. Then the reduction of $e\in{\rm Ell}_{\mathcal{O}_E}^{\rm CM}$ modulo $\mathfrak{p}$ is a supersingular elliptic curve defined over $\overline{\Bbb{F}}_p$ \cite[p.\ 41]{De}. In this way we obtain a reduction map
\[
{\rm red}_p: {\rm Ell}_{\mathcal{O}_E}^{\rm CM}\rightarrow  \mathscr{E}_p^{\rm ss},\qquad e\mapsto e\!\!\!\mod \mathfrak{p}.
\]

It was observed by Michel \cite{Mi} that this example fits into the paradigm of Duke's theorems of the previous examples. (We refer to the recent preprint \cite{ALMW} for a detailed description of the relation with toric packets.) Namely, for fixed $p$, the map ${\rm red}_p$ is surjective if the discriminant of $E$ is large enough, and in fact one has the equidistribution statement
\[
\frac{|\{e \in{\rm Ell}_{\mathcal{O}_E}^{\rm CM}: \text{red}_p(e) = e_0\}|}{|{\rm Ell}_{\mathcal{O}_E}^{\rm CM}|} \rightarrow m_p(e_0)\qquad (e_0\in \mathscr{E}_p^{\rm ss}),
\]
as $E$ varies along the imaginary quadratic fields for which $p$ is inert. 

Having reprised Duke's theorem in this setting, we now take an imaginary quadratic extension $E$ of $\Q$, in which two distinct odd primes $p,q$ are inert. Then $E$ embeds diagonally into the product $\bB^{(p,\infty)}\times\bB^{(q,\infty)}$. We may then consider the simultaneous supersingular reduction map 
\[
{\rm red}_{p,q}: {\rm Ell}_{\mathcal{O}_E}^{\rm CM}\rightarrow\mathscr{E}_p^{\rm ss}\times  \mathscr{E}_q^{\rm ss},\qquad e\mapsto (e\!\!\!\mod \mathfrak{p},e\!\!\!\mod \mathfrak{q}).
\]
Our main result, Theorem \ref{thm1}, shows that, under GRH, ${\rm red}_{p,q}$ is surjective for $E$ of large enough discriminant $D$, and that the pushforward of the counting measure on ${\rm Ell}_{\mathcal{O}_K}^{\rm CM}$ tends to the product measure $m_p\times m_q$ on $\mathscr{E}_p^{\rm ss}\times  \mathscr{E}_q^{\rm ss}$ as $D\rightarrow\infty$. Note that, in contrast to the spectral approach we have adopted, dynamical methods can handle $n\geq 2$ distinct copies of $\bB_i$.  %-- at the price of one Linnik condition for every copy, cf. \cite{ALMW}.

\section{Reduction to half-integral mixed moment}\label{sec2}

In this subsection we reduce the proofs of Theorem \ref{thm1} and \ref{thm2} to statements about a half-integral mixed moment of $L$-functions. Throughout, we retain the notation and hypotheses of Theorems \ref{thm1} and \ref{thm2}. 
%As with Duke's original approach, our method passes through Weyl's criterion for equidistribution. We shall therefore need the following notation for period integrals. 
For $f \in L^2([\bG]_{K})$ let 
\begin{equation}\label{period}
\mathcal{P}_{\Delta\bT}(f)=\int_{[\Delta\bT_\iota]} f(t) dt\qquad\text{and}\qquad \mathcal{P}_{\Delta_{\alpha, \beta}\bT}(f)=\int_{[\Delta_{\alpha, \beta}\bT_\iota]} f(t) dt
\end{equation}
be the global toric period integral relative to the subgroups $\Delta\bT_\iota\subset\bG$ and $\Delta_{\alpha, \beta}\bT_\iota\subset\bG$. When $\alpha=\beta=1$ these two subgroups and their corresponding periods coincide. We recall that the measures on $[\Delta\bT_\iota]$ and $[\Delta_{\alpha,\beta}\bT_\iota]$ are normalized to have volume 1. 

Let $L^2_0([\bG]_K)$ denote the orthocomplement of the character spectrum in $L^2_{\rm disc}([\bG]_K)$. Since $K_j$ is the projective unit group of the maximal order $\mathscr{O}_j$, the character spectrum of $K_j$, and thus of $K=K_1\times K_2$, is trivial (see \cite[\S 9.2-9.3]{ALMW}, where this is shown to hold for Eichler orders). %Theorems \ref{thm1} and \ref{thm2} are equivalent to showing that for every smooth compactly supported right-$K$-invariant function $f\in L^2_0([\bG])$, we have
%\begin{equation}\label{period-bounds}
%\mathcal{P}_{\Delta_{\alpha, \beta}\bT}({\rm g}.f)\ll_\varepsilon (\log D)^{-1/4+\varepsilon},
%\end{equation}
%where the implied constant depends on $\alpha, \beta, \varepsilon$ and $f$.

\subsection{Setting up Weyl's criterion}\label{sec:Weyl}

It suffices, for proving Theorems \ref{thm1} and \ref{thm2}, to study the periods \eqref{period} when $f$ is an element of a given orthonormal basis of $L^2_0([\bG]_K)$. We now describe a particularly nice such orthonormal basis, coming from the theory of new vectors for inner forms of $\GL_2$.

We begin with the group $\bG_j$, for $j=1,2$. Let $L^2_0([\bG_j]_{K_j})$ denote the orthocomplement of the trivial character $1_{\bG_j}$ in $L^2_{\rm disc}([\bG_j]_{K_j})$. We have the Hilbert space direct sum decomposition 
\begin{equation}\label{L2-decomp-Gj}
L^2_0([\bG_j]_{K_j})=\bigoplus_{\sigma_j\subset L^2_0([\bG_j])}\sigma_j^{K_j}
\end{equation}
into irreducible discrete automorphic subrepresentations having non-trivial invariants under $K_j=\bG_j(\widehat{\Z})K_{j,\infty}$, where we have used the Multiplicity One theorem. Since $K_j$ is a maximal compact subgroup at each finite place, and either a maximal compact or maximal compact proper subgroup (if $\bB$ is definite) at infinity, we have $\dim \sigma_j^{K_j}\leq 1$. For each $\sigma_j$ appearing in the  decomposition \eqref{L2-decomp-Gj}, there is therefore a unique up to unitary scaling choice of an $L^2$-normalized vector $\phi_{\sigma_j}$ in the line of $K_j$-invariants, and we let $\mathcal{B}_{0,j}=\{\phi_{\sigma_j}\}$ be the resulting orthonormal basis of $L^2_0([\bG_j]_{K_j})$.

Note that, in the case of $\bB_j$ definite and $K_{j,\infty}$ a maximal  {compact torus}, each $\phi_{\sigma_j}$ is an eigenfunction for the sphere Laplacian with an eigenvalue of the form $k(k+1)$. When $\bB_j$ is indefinite, each $\phi_{\sigma_j}$ is an eigenfunction for the hyperbolic Laplacian of the form $1/4 + t^2$. In either case, we denote the Laplacian eigenvalue by  $\lambda_{\sigma_j}^2$ and call $\lambda_{\sigma_j}$ the \emph{spectral parameter}. Note that $\lambda_{\sigma_j} > 1/3$ is bounded away from 0  by \cite{KS} and comparable in size to the archimedean conductor of $\sigma_j$. In the remaining case where $\bB_j$ is definite and $K_{j,\infty}=\bG_j(\R)$, we simply put $\lambda_{\sigma_j}=1$. 

We now return to the product group $\bG=\bG_1\times\bG_2$. Let $L^2_{00}([\bG]_K)=L^2_0([\bG_1]_{K_1})\otimes  L^2_0([\bG_2]_{K_2})$. Then
\[
L^2_0([\bG]_K)=\big(\C 1_{\bG_1}\otimes L^2_0([\bG_2]_{K_2})\big)\bigoplus \big(L^2_0([\bG_1]_{K_1}\otimes \C 1_{\bG_2}\big) \bigoplus L^2_{00}([\bG]_K).
\]
We deduce that $L^2_0([\bG]_K)$ admits an orthonormal basis of the form $\mathcal{B}_0=\mathcal{B}_{01}\bigcup \mathcal{B}_{02}\bigcup \mathcal{B}_{00}$, where
\[
\mathcal{B}_{01}=\mathcal{B}_{0,1}\otimes 1_{\bG_2},\qquad \mathcal{B}_{02}=1_{\bG_1}\otimes\mathcal{B}_{0,2},\quad\text{and}\quad \mathcal{B}_{00}=\mathcal{B}_{0,1}\otimes\mathcal{B}_{0,2}.
\]
For each $\phi\in\mathcal{B}_0$ we let $\lambda_\phi$ denote $\lambda_{\sigma_j}$ or $\lambda_{\sigma_1}\lambda_{\sigma_2}$ according to whether $\phi\in\mathcal{B}_{0j}$ or $\phi\in\mathcal{B}_{00}$.

Now let $f \in C^{\infty}_c([\bG]_K)\cap L^2_0([\bG]_K)$ with $L^2$-spectral expansion
\[
f=\sum_{\phi\in\mathcal{B}_0}\langle f,\phi\rangle \phi.
\]
This expansion is finite whenever both $\bB_j$ are definite and $K_{j,\infty}=\bG_j(\R)$. Otherwise the expansion is absolutely convergent, and by self-adjointness of the Laplace operator, the $L^2$-inner products verify $\langle f,\phi\rangle \ll_d \mathcal{S}_{\infty, 2d}(f)\lambda_\phi^{-d}$ for all $d\in\N$. In all cases, we deduce that Theorems \ref{thm1}  and \ref{thm2} follow from 
\begin{equation}\label{period-bd2}
\mathcal{P}_{\Delta_{\alpha, \beta}\bT}( {\rm g}.\phi)\ll (\log {D})^{-1/4+\varepsilon} %\qquad (\forall\;\phi\in\mathcal{B}_0),
\end{equation}
for all $\phi \in \mathcal{B}_0$, where the implied constant may depend on $\alpha, \beta, \varepsilon$ and \emph{polynomially} on $\lambda_\phi$. Note that 
\[
\mathcal{P}_{\Delta_{\alpha,\beta}\bT}( {\rm g}.(\phi_{\sigma_1}\otimes 1_{\bG_2}))=\int_{[{\bT}]}\phi_{\sigma_1}(t^{\alpha}{\rm g}_1)dt\quad\textrm{and}\quad \mathcal{P}_{\Delta_{\alpha,\beta}\bT}( {\rm g}.(1_{\bG_2}\otimes\phi_{\sigma_2}))=\int_{[{\bT}]}\phi_{\sigma_2}(t^{\beta}{\rm g}_1)dt.
\]
Thus the bound \eqref{period-bd2} for test functions in $\mathcal{B}_{0j}$ is covered by Duke's theorem as stated in Section \ref{sec:M-V}, even with a power saving rate, so that it suffices to prove \eqref{period-bd2} on the basis elements in $\mathcal{B}_{00}$.

\subsection{Parseval and absolute values}\label{sec:Parseval}

%\footnote{\textcolor{red}{F: I placed the paragraph on the parametrization of the class group characters in the next section, since it is not needed yet here.}} 
For $\phi_j \in L^2([\bG_j]_{K_j})$ and $\chi \in \widetilde{\rm Cl}_E^\vee$ we define 
\begin{equation}\label{twisted}
\mathcal{P}_{\bT}^\chi (\phi_j)=  \int_{[\bT_{\iota_j}]} \phi_j(t)  \chi(t) dt 
\end{equation}
to be the global $(\bT_{\iota_j},\chi)$-period  integral. We now convert the problem of estimating of the diagonal period $\mathcal{P}_{\Delta_{\alpha,\beta}\bT}$ to one of bounding an average of these twisted periods. 

In the context of Theorem \ref{thm1} this is a straightforward task.  We recall from Section \ref{sec:ACG} that $[\bT_{\iota_j}]_{K_j}$ for $j= 1, 2$ can naturally be identified with the Arakelov class group $\widetilde{\text{Cl}}_E$. Using the identification \eqref{eq:cl-gp}, we may view the integral over $[\Delta\bT_\iota]_K$ in \eqref{period} as an inner product over the Arakelov class group $\widetilde{\rm Cl}_E$. Plancherel's identity then gives
\begin{equation}\label{planch}
\mathcal{P}_{\Delta\bT}(\phi_\sigma^\circ)=\int_{[\bT]} \phi_{\sigma_1}^\circ( \iota_1(t)) \overline{\phi_{\sigma_2}^\circ(\iota_2(t))} dt= \sum_{\chi \in \widetilde{\rm Cl}_E^\vee} \mathcal{P}_{\bT}^\chi(\phi_{\sigma_1}^\circ) \overline{\mathcal{P}_{\bT}^\chi(\phi_{\sigma_2}^\circ)},
 \end{equation}
 where $\phi_\sigma^\circ={\rm g}.\phi_\sigma$. More generally, working in the context of Theorem \ref{thm2}, we apply Fourier inversion to obtain
\begin{displaymath}
\begin{split}
\mathcal{P}_{\Delta_{\alpha, \beta}\bT}(\phi_\sigma^\circ) = \int_{[\bT]} \Big(\sum_{\chi \in \widetilde{\rm Cl}_E^\vee} \mathcal{P}_{\bT}^\chi (\phi_{\sigma_1}^\circ) \overline{\chi^{\alpha}(t)}\Big)
\overline{\phi_{\sigma_2}^\circ(\iota_2(t)^{\beta})} dt = \sum_{\chi \in \widetilde{\rm Cl}_E^\vee} \mathcal{P}_{\bT}^\chi (\phi_{\sigma_1}^\circ) \int_{[\bT]}  \overline{\phi_{\sigma_2}^\circ(\iota_2(t)^{\beta})   \chi^{\alpha}(t)} dt. 
\end{split}
\end{displaymath}
Since by assumption $ {\rm Cl}_E$ has no $\beta$-torsion\footnote{The assumption of no $2$-torsion is used later, in Remark \ref{rem6}.}, the integral vanishes unless $\chi^{\alpha} = \psi^\beta$ for some $\psi \in \widetilde{\rm Cl}_E^\vee$ (which is unique). 
Writing $\alpha = \alpha'\delta$, $\beta = \beta'\delta$ with $(\alpha', \beta') = 1$, we see that $\chi = \psi^{\beta'}$ for some $\psi \in \widetilde{\rm Cl}_E^\vee$, so $\chi^{\alpha}(t) = \psi^{ \alpha'}(t^{\beta})$. 
Changing variables $t \leftarrow t^{\beta}$ in the $t$-integral, we obtain
\[
\mathcal{P}_{\Delta_{\alpha, \beta}\bT}(\phi_\sigma^\circ)=\sum_{\chi \in \widetilde{\rm Cl}_E^\vee} \mathcal{P}_{\bT}^{\chi^{\beta'}}(\phi_{\sigma_1}^\circ) \overline{\mathcal{P}_{\bT}^{\chi^{\alpha'}}(\phi_{\sigma_2}^\circ)},
\]
which specializes to \eqref{planch} when $\alpha=\beta=1$.

Having no way of accessing the sign of the periods $\mathcal{P}_{\bT}^\chi(\phi_{\sigma}^\circ)$ as $\chi$ varies, we now apply the triangle inequality to the $\chi$-sum, to get
\begin{equation}\label{plan}
|\mathcal{P}_{\Delta_{\alpha, \beta}\bT}(\phi_\sigma^\circ)|\leq \sum_{\chi \in \widetilde{\rm Cl}^\vee_E}\big|\mathcal{P}_{\bT}^{\chi^{\beta'}}(\phi_{\sigma_1}^\circ)\mathcal{P}_{\bT}^{\chi^{\alpha'}}(\phi_{\sigma_2}^\circ)\big|. 
\end{equation}

\begin{remark}
The bound \eqref{plan} sacrifices all cancellation in the $\chi$-sum. Something similar was done in the pioneering work of Holowinsky \cite{Ho}, and later Lester and Radziwi\l\l\, \cite{LR}, with respect to unipotent periods, in which $\bT$ is replaced by the unipotent subgroup $\mathbf{N}$ of upper triangular matrices in $\PGL_2$. At first sight this looks like a hopeless gambit, in view of the loss of information incurred, but these breakthrough papers demonstrated that it is reasonable nonetheless to hope for some small savings.
\end{remark}

\subsection{Fractional moments of $L$-functions}\label{sec:fractional}
We now convert the right-hand side of \eqref{plan} to a fractional moment of $L$-functions using an explicit form of Waldspurger's theorem \cite{Wa}, which relates the twisted period \eqref{twisted} to the central Rankin--Selberg $L$-value $L(1/2, \pi_j \times \chi)$, where $\pi_j$ on ${\rm PGL}_2(\Bbb{A})$ is associated with $\sigma_j$ by the Jacquet--Langlands correspondence and $\chi$ is viewed as the automorphic induction to $\GL_2$ over $\Q$. This will lead us to the statement of our main analytic result Theorem \ref{thm3}, which will be seen to imply both Theorems \ref{thm1} and \ref{thm2}. 

The dual group $\widetilde{\rm Cl}_E^{\vee}$ consists of everywhere unramified Hecke characters, trivial at infinity if $E$ is imaginary and totally even at infinity if $E$ is real. Let $\psi\in \widetilde{\rm Cl}_E^\vee$ denote either of the characters $\chi^{\alpha'}$ or $\chi^{\beta'}$ appearing in \eqref{plan}. We may assume that ${\rm Hom}_{\bT_{\iota_j}(\A_\Q)}(\sigma_j,\psi)\neq 0$ since otherwise the twisted period $\mathcal{P}_{\bT_j}^\psi |_{\sigma_j}$ in \eqref{plan} vanishes. Moreover, since $\psi$ is unramified, and $\phi_{\sigma_j}^\circ$ is invariant under ${\rm g}_f^{-1}\bG_j(\widehat{\Z}){\rm g}_f$ in which $\bT$ is optimally embedded, the vector $\phi_{\sigma_j}^\circ$ is the global Gross--Prasad \cite{GP} vector. This observation allows us to use the explicit Waldspurger formula from \cite[Theorem 1.1]{FMP}, which states that
\begin{equation}\label{twisted-Walds}
|\mathcal{P}_{\bT_j}^\psi(\phi_{\sigma_j}^\circ)|^2= C_{\bG_j} C_{\rm Ram}(\pi_j, \psi)\frac{1}{\sqrt{D} }\frac{1}{L(1, \eta_E)^2}\frac{L(1/2, \pi_j \times \psi)}{L(1,\text{{\rm Ad }} \pi_j)}F(\pi_{j,\infty},\psi_\infty),
\end{equation}
for a constant $C_{\bG_j}>0$ depending only on $\bG_j$, a constant $C_{\rm Ram}(\pi,E)>0$ depending only on $E$ and on the local ramified local components of $\pi$, and a function $F$ of the spectral parameters of $\pi_\infty$ and $\psi_\infty$. 

In Appendix \ref{App-A} we prove bounds on $C_{\rm Ram}(\pi_j, \psi)$ and $F(\pi_{j,\infty},\psi_\infty)$. To state them here, let $N_{\pi_j}$ denote the (arithmetic) conductor of $\pi_j$ and write $\lambda_{\pi_j}$ for the spectral parameter $\lambda_{\sigma_j}$ from Section \ref{sec:Weyl}. If $\chi_\infty$ is the archimedean component of $\chi$, we let $\lambda_\chi\in\R$ be its frequency as a character on $\bT(\R)/K_\bT$. In particular, when $E$ is imaginary quadratic, we have $\chi_\infty=1$ and thus $\lambda_\chi=0$. If $E$ is real, then $\chi_{\infty}(x_1, x_2) = |x_1|^{i\lambda_{\chi}} |x_2|^{-i\lambda_{\chi}}$ with $\lambda_{\chi} \in \frac{\pi}{\log \epsilon}\Bbb{Z}$. We show in Appendix \ref{App-A}  that
\begin{equation}\label{boundF}
C_{\rm Ram}( \pi_j, \psi)\ll_\varepsilon N_{\pi_j}^{\varepsilon}\quad\textrm{and}\quad F(\pi_{j,\infty},\psi_\infty)\ll \exp \big(-  c_0|\lambda_{\psi}|/\lambda_{\pi_j}\big),
\end{equation}
for some absolute constant $c_0 > 0$.

We now take \textit{positive} square-roots in \eqref{twisted-Walds} and insert this into the right-hand side of \eqref{plan}, which we are to bound. Using \eqref{vol1}, as well as the bound $1/L(1,\eta_E)\ll\log\log D$, which is known to hold under GRH (see \eqref{titch2} below), we obtain
 \begin{align*}
\mathcal{P}_{\Delta\bT_{\alpha,\beta}}(\phi_\sigma^\circ)
\ll_\varepsilon \frac{N_
\pi^{\varepsilon} \log\log D}{{\rm vol}(\widetilde{\rm Cl}_E)} \sum_{\chi \in \widetilde{\rm Cl}^\vee_E}\exp\Big(- \frac{c_0|\lambda_{\chi}|}{\lambda_\pi}\Big)
\left(\frac{L(1/2, \pi_1 \times \chi^{\beta'})L(1/2, \pi_2 \times \chi^{\alpha'})}{L(1,\text{{\rm Ad }} \pi_1)L(1,\text{{\rm Ad }} \pi_2)}\right)^{1/2},
 \end{align*}
for some $c_0 > 0$ (which may depend on $\alpha, \beta$), where we have put $N_\pi=N_{\pi_1}N_{\pi_2}$, and $ \lambda_\pi = \lambda_{\pi_1} \lambda_{\pi_2}$. For notational simplicity we drop the prime in the above estimate and relabel $\alpha' \rightarrow \alpha$, $\beta' \rightarrow \beta$. 

Now that we have converted the original problem to one on $L$-functions, the key observation is that on average over $\chi$, the central values $L(1/2, \pi_1 \times \chi^\alpha)$ and $L(1/2, \pi_2 \times \chi^\beta)$ are a little less than 1, and that these small values occur \textit{independently of each other}. This will just suffice to obtain the equidistribution results in Theorems \ref{thm1} and \ref{thm2}. The following result makes this precise. 

\begin{theorem}\label{thm3} Let $E$ be a quadratic field extension of $\Q$, of discriminant $D$. Let $\pi_1, \pi_2$ be irreducible cuspidal automorphic representations on ${\rm PGL}_2(\A_\Q)$, of square-free level. Let $\lambda_{\pi_j}$ be the spectral parameter of $\pi_j$ and put $\lambda_\pi= \lambda_{\pi_1} \lambda_{\pi_2}$.  Let $c_0 > 0$. Assume the Generalized Riemann Hypothesis.
\begin{enumerate}
\item[(a)]\label{3a} If $\pi_1 \neq \pi_2$ then
\[
\frac{1}{{\rm vol}(\widetilde{\rm Cl}_E)} \sum_{\chi \in \widetilde{\rm Cl}^\vee_E} \exp\Big(-\frac{c_0|\lambda_{\chi}|}{ \lambda_\pi}\Big)\Big(\frac{L(1/2, \pi_1 \times \chi)L(1/2, \pi_2 \times \chi)}{L(1,\text{{\rm Ad }} \pi_1)L(1,\text{{\rm Ad }} \pi_2) }\Big)^{1/2} \ll_{\varepsilon, c_0} (\log D)^{-1/4 + \varepsilon}
\]
for any $\varepsilon > 0$. 

\item[(b)] Let $\alpha, \beta \in \Bbb{N}$ be two distinct positive integers. Assume that $\widetilde{\rm Cl}_E$ has no $2\beta$ torsion. Then
\[
\frac{1}{{\rm vol}(\widetilde{\rm Cl}_E)} \sum_{\chi \in \widetilde{\rm Cl}^\vee_E} \exp\Big(-\frac{c_0|\lambda_{\chi}|}{ \lambda_\pi}\Big)\Big(\frac{L(1/2, \pi_1 \times \chi^\alpha)L(1/2, \pi_2 \times \chi^\beta)}{L(1,\text{{\rm Ad }} \pi_1)L(1,\text{{\rm Ad }} \pi_2) }\Big)^{1/2} \ll_{\alpha,\beta,\varepsilon, c_0} (\log D)^{-1/4 + \varepsilon}
\]
for any $\varepsilon > 0$. 
\end{enumerate}
All implied constants depend polynomially on $\lambda_\pi$.
\end{theorem}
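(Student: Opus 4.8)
\textbf{Proof proposal for Theorem \ref{thm3}.}

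The plan is to reduce the fractional moment to a short Dirichlet polynomial evaluation, then apply Cauchy--Schwarz to separate the two factors, and finally estimate each resulting moment by the probabilistic method of Soundararajan. First, under GRH, Soundararajan's method gives, for each $\chi$, an upper bound of the shape $\log L(1/2,\pi_i\times\chi^{\bullet}) \le \Re \sum_{p\le X} \frac{a_{\pi_i\times\chi^\bullet}(p)}{p^{1/2+1/\log X}} + O\big(\frac{\log D}{\log X}\big)$ for a parameter $X$ which we will take to be a small power of $\log D$ (or $(\log D)^{A}$), so that the error contributes only a constant to $L(1/2,\pi_i\times\chi^\bullet)^{1/2}$. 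Exponentiating, the quantity to bound becomes essentially $\frac{1}{{\rm vol}} \sum_\chi w(\chi)\, \exp\!\big(\tfrac14 \Re P_1(\chi^\beta)\big)\exp\!\big(\tfrac14 \Re P_2(\chi^\alpha)\big)$, where $P_i$ is the prime sum above for $\pi_i$ and $w(\chi)=\exp(-c_0|\lambda_\chi|/\lambda_\pi)$ is harmless (it only restricts to $|\lambda_\chi|\ll \lambda_\pi \log\log D$, i.e. polynomially many archimedean frequencies). I would then Cauchy--Schwarz in $\chi$ to bound this by the geometric mean of $\frac{1}{{\rm vol}}\sum_\chi w(\chi)\exp(\tfrac12\Re P_1(\chi^\beta))$ and the analogous sum with $P_2(\chi^\alpha)$; each of these is a genuine \emph{first-moment-type} average over the character group.

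The combinatorial heart (Section \ref{sec7} in the paper) is the evaluation of these averages. Expanding $\exp(\tfrac12\Re P_i)$ as a Dirichlet series supported on $X$-smooth integers $n$ with controlled (essentially $(1/4)^{\Omega(n)}$-type) coefficients, orthogonality of characters on $\widetilde{\rm Cl}_E$ picks out the terms where the relevant ideal $\mathfrak{n}$ of norm $n$ in $\mathcal O_E$ is a $\beta$-th power (resp. $\alpha$-th power) in the class group \emph{and} the archimedean part matches. The main term comes from $n$ a perfect square (so $\mathfrak n$ is principal, hence trivial in ${\rm Cl}_E$), and because the coefficient of such an $n$ is roughly $(1/4)^{\Omega(n)}$ and the Euler product $\prod_{p\le X}(1+ \tfrac14\cdot\tfrac{|a_{\pi_i}(p)|^2}{p}+\cdots)$ behaves like $(\log X)^{1/4}\asymp (\log\log D)^{1/4}$, each first moment is $\ll (\log\log D)^{1/4+\varepsilon}$ up to the $1/L(1,{\rm Ad}\,\pi_i)$ normalization. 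The off-diagonal terms ($\mathfrak n$ a nontrivial power but still killed by the character sum, i.e. $[\mathfrak n]$ is $\beta$-torsion, or $\mathfrak n$ principal for accidental reasons) must be shown to contribute a lower-order amount; here the hypothesis that $\widetilde{\rm Cl}_E$ has no $2\beta$-torsion in part (b) (and no $2$-torsion, used via Remark \ref{rem6}) is exactly what forces such $\mathfrak n$ to have norm $\ge$ a power of $X$, hence negligible after the smooth support truncation. In part (a), where $\alpha=\beta=1$, the corresponding condition is just that $\mathfrak n$ be principal, i.e. $n$ a norm from a principal ideal, and one still needs $n$ large unless $n$ is a perfect square up to the contribution of genus characters — this is where $\pi_1\neq\pi_2$ is \emph{not} needed for the first moments but the product structure is.

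Finally, multiplying the two Cauchy--Schwarz factors gives $\ll (\log\log D)^{1/2+\varepsilon}\cdot\big(L(1,{\rm Ad}\,\pi_1)L(1,{\rm Ad}\,\pi_2)\big)^{-1/2}$ for the sum \emph{before} dividing by the missing $L(1,{\rm Ad})^{1/2}$ factors already present in the statement — but the statement has $L(1/2,\pi_i\times\chi^\bullet)^{1/2}/L(1,{\rm Ad}\,\pi_i)^{1/2}$, and under GRH $L(1,{\rm Ad}\,\pi_i)^{-1}\ll (\lambda_{\pi_i}\log\log)^{O(1)}$ is absorbed into the polynomial dependence on $\lambda_\pi$, so the bound is $\ll_\varepsilon (\log\log D)^{1/2+\varepsilon}$, which is $\ll (\log D)^{-1/4+\varepsilon}$ with enormous room to spare; in fact the decisive point is rather that dividing by ${\rm vol}(\widetilde{\rm Cl}_E) = c\,L(1,\eta_E) D^{1/2}$ and using $h_E = D^{1/2+o(1)}$ means that the ``trivial'' bound (positivity, no Cauchy--Schwarz) would give only $O(1)$ and we genuinely need the power $1/4$ savings per factor from the $(1/4)^{\Omega(n)}$ coefficients to beat $(\log D)^\varepsilon\cdot$(trivial). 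The main obstacle I anticipate is the off-diagonal analysis in part (b): controlling ideals $\mathfrak n$ whose class is $\alpha$- or $\beta$-torsion requires a careful count (the number of such $\mathfrak n$ of norm $\le Y$ is governed by $L$-functions of the torsion characters, and one needs this to be $o(Y/(\log X)^{c})$ uniformly), together with a clean treatment of when both the finite and archimedean parts of $\chi^\alpha$ versus $\chi^\beta$ can coincide — precisely the place where the distinctness of $\alpha$ and $\beta$ (and the no-$2\beta$-torsion hypothesis) enters, and where the ``independence'' of the two central values is actually proved.
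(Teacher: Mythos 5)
There is a genuine gap, and it lies at the Cauchy--Schwarz step. Applying Cauchy--Schwarz to separate $L(1/2,\pi_1\times\chi^\beta)^{1/2}L(1/2,\pi_2\times\chi^\alpha)^{1/2}$ into two individual averages \emph{destroys exactly the savings the theorem asserts}. To see the problem, recall the log-normal heuristic (Section \ref{sec5} of the paper): each $\log L(1/2,\pi_j\times\chi)$ has mean $\mu_{j,D}\approx -\tfrac12\log\log D$ and variance $\approx\log\log D$ over $\chi$, so the first moment of a single factor satisfies
\[
\frac{1}{{\rm vol}}\sum_\chi L(1/2,\pi_j\times\chi)\;\asymp\; e^{\mu_{j,D}+\tfrac12{\rm var}}\;\asymp\; L(1,\eta_E)\,L(1,\mathrm{Ad}\,\pi_j)\;\asymp\;(\log\log D)^{O(1)},
\]
i.e.\ essentially $O(1)$. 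Thus Cauchy--Schwarz gives
\[
\frac{1}{{\rm vol}}\sum_\chi (L_1L_2)^{1/2}\;\le\;\Big(\frac{1}{{\rm vol}}\sum_\chi L_1\Big)^{1/2}\Big(\frac{1}{{\rm vol}}\sum_\chi L_2\Big)^{1/2}\;\asymp\;(\log\log D)^{O(1)},
\]
which is of size $\gg 1$, not $\ll(\log D)^{-1/4+\varepsilon}$. The saving of $(\log D)^{-1/4}$ comes precisely from the approximate \emph{independence} of $\log L_1$ and $\log L_2$, which lets $\mathbb{E}((L_1L_2)^{1/2})\approx\mathbb{E}(L_1^{1/2})\mathbb{E}(L_2^{1/2})\asymp(\log D)^{-1/8}\cdot(\log D)^{-1/8}$; Cauchy--Schwarz cannot see that independence and gives the bound one would have if $L_1=L_2$. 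The paper therefore never separates the two central values; it bounds $\log(L_1L_2)$ by a \emph{joint} short Dirichlet polynomial involving $a_\chi(p)(\lambda_1(p)+\lambda_2(p))$ (or, for part (b), $a_{\chi^\alpha}(p)\lambda_1(p)+a_{\chi^\beta}(p)\lambda_2(p)$), computes high moments of this combined sum via the orthogonality lemmas of Section \ref{sec7}, and reads off the distribution of $\log(L_1L_2)$ directly. The degree-8 correlation factor $L(1,\pi_1\times\pi_2\times\theta_E)$ from Remark \ref{rem:correction-factor} appears exactly because the two prime sums are cross-multiplied; Cauchy--Schwarz makes this correlation analysis impossible.

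Two further problems compound the gap. First, your concluding assertion that $(\log\log D)^{1/2+\varepsilon}\ll(\log D)^{-1/4+\varepsilon}$ is false: the left side tends to infinity and the right side to zero, so even granting every prior step the conclusion does not follow. Second, taking the truncation $X$ to be a small power of $\log D$ makes the error term $O(\log D/\log X)=O(\log D/\log\log D)$ in Chandee's inequality of size $\exp(\log D/\log\log D)$ after exponentiating, which dwarfs everything; the paper must instead choose $x=D^{A/(\varepsilon V)}$ adaptively in $V$ (see \eqref{new-x}), so that $\log x\asymp\log D/V$ is much larger, which is standard in Soundararajan's method but incompatible with a fixed small $X$. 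Your intuition about the orthogonality and the role of the no-$2\beta$-torsion hypothesis is correct in spirit and broadly matches Lemmas \ref{lem:chi-sum}--\ref{lemma12}, but it cannot be deployed after the Cauchy--Schwarz separation.
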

This theorem is the technical heart of the paper. The bound is best-possible, up to the power of $\varepsilon$, and in the following section we explain probabilistic model behind it. 

  As promised in the introduction, we give an immediate application to Fourier coefficients and Bessel periods of Yoshida lifts. Let $f, g$ be two distinct holomorphic cusp forms of weight $2$ and $2k-2$, respectively,  and level $N$ where $N$ is squarefree and has an odd number of prime factors. To $f$ and $g$ one can associate a non-zero holomorphic Siegel cusp form $F$ such that $L(s, F) = L(s, f)L(s, g)$ and for all $p \mid N$ the local representation $\pi_p$ associated with $F$ is of type VIb, cf.\ \cite{DPSS}. For a fundamental discriminant $D<0$ such that all $p \mid N$ remain inert in $\Bbb{Q}(\sqrt{D})$, let ${\rm Sym}_D^2(\Z)$ be the set of positive definite symmetric semi-integral matrices with integral diagonal in $M_2$ of determinant $D/4$. We may identify $\SL_2(\Z)$-equivalence classes of elements in ${\rm Sym}_D^2(\Z)$ with the class group ${\rm Cl}_E$ of $E = \Bbb{Q}(\sqrt{D})$. Let $a(F, S)$ denote the Fourier coefficient of $F$ at the matrix $S\in {\rm Sym}_D^2(\Z)$; as it depends only on the $\SL_2(\Z)$-equivalence of $S$ we may write $a(F,S_t)$ for $t\in {\rm Cl}_E$. For a fundamental discriminant $D < 0$ and a class group character $\chi\in {\rm Cl}_E^\vee$ we define the Bessel period
$$R(F, D, \chi) = \sum_{t \in \text{Cl}_E} a(F, S_t) \bar{\chi}(S_t).$$ From \cite[Proposition 3.14]{DPSS}, and Fourier inversion over ${\rm Cl}_E$, we obtain the following corollary of Theorem \ref{thm3}. 
\begin{cor}\label{cor4} Assume GRH. Let $F$ be a Yoshida lift and $D<0$   a fundamental discriminant satisfying the above assumptions. Then we have 
$$\max_{t\in {\rm Cl}_E}|a(F, S_t)| \leq \frac{1}{|{\rm Cl}_E|}\sum_{\chi \in {\rm Cl}_E^{\vee}} |R(F, D, \chi)|\ll_{\varepsilon, F} \frac{|D|^{(k-1)/2}}{(\log |D|)^{1/4 - \varepsilon}}$$
for any $\varepsilon > 0$. 
\end{cor}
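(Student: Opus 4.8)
The plan is to deduce Corollary \ref{cor4} from Theorem \ref{thm3}(a) essentially by dictionary translation, so the only content is in matching normalizations and then applying the theorem as a black box. First I would recall from \cite[Proposition 3.14]{DPSS} the precise identity expressing the square of a twisted Bessel period of the Yoshida lift $F$ in terms of the product of central Rankin--Selberg values: schematically,
\[
|R(F,D,\chi)|^2 = c(F)\, |D|^{k-1}\, \frac{L(1/2,f\times\chi)\,L(1/2,g\times\chi)}{L(1,\mathrm{Ad}\,\pi_f)\,L(1,\mathrm{Ad}\,\pi_g)}\cdot(\text{local/archimedean factors}),
\]
valid because the hypotheses on $N$ (squarefree, odd number of prime factors, all inert in $E$) guarantee that the relevant local root numbers are $+1$ and that the local Bessel functionals are nonzero, which is exactly the setting in which the Yoshida lift $F$ on the definite quaternionic side exists. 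Here $\pi_f,\pi_g$ are the ${\rm PGL}_2$-representations attached to $f,g$; since $f\ne g$ we have $\pi_f\ne\pi_g$, so Theorem \ref{thm3}(a) applies. Since $D<0$ is imaginary quadratic, $\widetilde{\rm Cl}_E = {\rm Cl}_E$, the archimedean character $\chi_\infty$ is trivial, $\lambda_\chi=0$, and ${\rm vol}(\widetilde{\rm Cl}_E)=|{\rm Cl}_E|$, so the exponential weight $\exp(-c_0|\lambda_\chi|/\lambda_\pi)$ is identically $1$ and the sum in Theorem \ref{thm3}(a) is literally $\frac{1}{|{\rm Cl}_E|}\sum_\chi (\cdots)^{1/2}$.

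Next I would take positive square roots in the identity above and bound the local and archimedean factors: the ramified local factors contribute $O_\varepsilon(N^\varepsilon)=O_{\varepsilon,F}(1)$ since $N$ is fixed with $F$, the adjoint $L$-values in the denominator are fixed positive constants depending only on $f,g$ (hence on $F$), and the archimedean factor is bounded in terms of the fixed weights $2$ and $2k-2$. Thus for every $\chi\in{\rm Cl}_E^\vee$,
\[
|R(F,D,\chi)| \ll_F |D|^{(k-1)/2}\left(\frac{L(1/2,\pi_f\times\chi)\,L(1/2,\pi_g\times\chi)}{L(1,\mathrm{Ad}\,\pi_f)\,L(1,\mathrm{Ad}\,\pi_g)}\right)^{1/2}.
\]
Summing over $\chi\in{\rm Cl}_E^\vee$, dividing by $|{\rm Cl}_E|$, and invoking Theorem \ref{thm3}(a) gives
\[
\frac{1}{|{\rm Cl}_E|}\sum_{\chi\in{\rm Cl}_E^\vee}|R(F,D,\chi)| \ll_{\varepsilon,F} |D|^{(k-1)/2}(\log|D|)^{-1/4+\varepsilon}.
\]
Finally, Fourier inversion over the finite abelian group ${\rm Cl}_E$ recovers each Fourier coefficient: for any $t_0\in{\rm Cl}_E$,
\[
a(F,S_{t_0}) = \frac{1}{|{\rm Cl}_E|}\sum_{\chi\in{\rm Cl}_E^\vee}\chi(S_{t_0})\,R(F,D,\chi),
\]
so by the triangle inequality $\max_{t}|a(F,S_t)|\le \frac{1}{|{\rm Cl}_E|}\sum_\chi |R(F,D,\chi)|$, and the desired estimate follows. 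One should also note that $|{\rm Cl}_E|\asymp |D|^{1/2+o(1)}$ (Siegel) is not even needed for the stated bound, since both the left inequality and the final estimate are uniform in this normalization; it only matters that the average is taken with the correct weight $1/|{\rm Cl}_E|$.

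The main obstacle here is not analytic but bookkeeping: one must check that the explicit constant in \cite[Proposition 3.14]{DPSS} is genuinely of the shape $c(F)|D|^{k-1}$ times the Rankin--Selberg ratio with only benign local corrections, and in particular that the archimedean factor $F(\pi_{j,\infty},\psi_\infty)$ — which in the general Theorem \ref{thm3} carries the exponential decay in $\lambda_\chi$ — degenerates to an $F$-dependent constant in the holomorphic imaginary-quadratic case. This is precisely the kind of compatibility verified in Appendix \ref{App-A}, so the corollary follows once one cites the DPSS formula together with those appendix bounds specialized to $\lambda_\chi=0$ and fixed spectral parameters. No subconvexity or moment input beyond Theorem \ref{thm3} is required.
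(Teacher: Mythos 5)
Your proof is correct and matches the paper's approach, which is stated in one line as ``From [DPSS, Proposition 3.14], and Fourier inversion over $\mathrm{Cl}_E$, we obtain the following corollary of Theorem \ref{thm3}.'' You fill in exactly the intermediate steps this cryptic sentence implies: cite the refined B\"ocherer formula from DPSS, take positive square roots, observe that in the imaginary-quadratic case $\widetilde{\mathrm{Cl}}_E=\mathrm{Cl}_E$ and $\lambda_\chi=0$ so the exponential weight in Theorem \ref{thm3}(a) is identically $1$, absorb the $F$-dependent local and archimedean factors into the implied constant, apply Theorem \ref{thm3}(a), and finish with Fourier inversion and the triangle inequality. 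The only cosmetic remark is that Appendix \ref{App-A} treats the local factors in the $\mathrm{GL}_2$ Waldspurger/FMP formula rather than the $\mathrm{GSp}(4)$ Bessel-period formula of DPSS, so it is an analogy rather than a direct citation; but you phrase it as ``the kind of compatibility,'' which is fair, and the substance of your argument (the archimedean and ramified factors are $O_F(1)$ once $F$, and hence $N$ and the weights, is fixed and $\lambda_\chi=0$) is correct.
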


On GRH, the trivial bound on the left-hand side and the sum in the middle is $|D|^{(k-1)/2 + \varepsilon}$. The  bound on the right-hand side is sharp, up to the value of $\varepsilon$, since the same is true of Theorem \ref{thm3}. One expects a bound of size $a(F, S_t)\ll_\varepsilon |D|^{k/2 - 3/4 + \varepsilon}$, based on both GRH and square-root cancellation in the $\chi$-sum in \eqref{planch}.

\section{Heuristics}\label{sec5}

In this section we give a heuristic argument for the proof of Theorem \ref{thm3}(a). For simplicity we assume that $E$ is imaginary quadratic (so that $\widetilde{\rm Cl}_E$ is just the ideal class group ${\rm Cl}_E$), and we also drop the adjoint $L$-values. We aim to explain what one should expect for the average
$$\frac{1}{|{\rm Cl}_E|} \sum_{\chi  \in {\rm Cl}_E^\vee}   \big(L(1/2, \pi_1 \times \chi)L(1/2, \pi_2 \times \chi)\big)^{1/2}.$$

For a function $f : {\rm Cl}_E^\vee \rightarrow \Bbb{C}$ let
\[
\Bbb{E}(f ) = \frac{1}{|{\rm Cl}_E|} \sum_{\chi \in {\rm Cl}_E^\vee} f(\chi).
\]
The basic idea is that on GRH we can express $\log L(1/2, \pi_j \times \chi)$ by a short sum over primes, namely
\begin{equation}\label{approx}
\log L(1/2, \pi_j \times \chi)  \approx  S_{\text{sp}}(\pi_j, \chi) + S_{\text{in}}(\pi_j, \chi) + O(1),
\end{equation}
where
\begin{displaymath}
\begin{split}
& S_{\text{sp}}(\pi_j, \chi) =  \sum_{\substack{(p) = \mathfrak{p} \bar{\mathfrak{p}}\\ p \leq x}} \frac{(\chi(\mathfrak{p}) + \bar{\chi}(\mathfrak{p}) )\lambda_j(p)}{p^{1/2}} + \frac{1}{2}\sum_{\substack{(p) = \mathfrak{p} \bar{\mathfrak{p}}\\ p^2 \leq x}} \frac{(\chi(\mathfrak{p})^2 + \bar{\chi}(\mathfrak{p})^2 )(\lambda_j(p^2)-1)}{p},\\
&   S_{\text{in}}(\pi_j, \chi)  =  \frac{1}{2}\sum_{\substack{\eta_E(p) = -1\\ p^2 \leq x}} \frac{2(\lambda_j(p^2)-1)}{p}.
\end{split}
\end{displaymath}
%\begin{equation*}%\label{3-terms}
%\begin{split}
%\log L(1/2, \pi_j \times \chi)  \approx &\sum_{\substack{(p) = \mathfrak{p} \bar{\mathfrak{p}}\\ p \leq x}} \frac{(\chi(\mathfrak{p}) + \bar{\chi}(\mathfrak{p}) )\lambda_j(p)}{p^{1/2}} + \frac{1}{2}\sum_{\substack{(p) = \mathfrak{p} \bar{\mathfrak{p}}\\ p^2 \leq x}} \frac{(\chi(\mathfrak{p})^2 + \bar{\chi}(\mathfrak{p})^2 )(\lambda_j(p^2)-1)}{p} \\
%&+  \frac{1}{2}\sum_{\substack{\eta_E(p) = -1\\ p^2 \leq x}} \frac{2(\lambda_f(p^2)-1)}{p}  +O(1). 
 %\end{split}
 %\end{equation*}
Here, $\lambda_j(n)$ denotes the Hecke eigenvalues of $\pi_j$ and we think of $x \approx \exp(\log{D}/\log\log {D})$. For the sake of exposition we ignore ramified primes. 

Asymptotically we have $\Bbb{E}( S_{\text{sp}}(\pi_j, \chi)) \sim 0$ by orthogonality of characters, and the variance of $S_{\text{sp}}(\pi_j, \chi)$ equals
\begin{align}\label{split-variance}
\Bbb{E} \sum_{\substack{\eta_E(p) = 1\\ p \leq x}} \frac{(\chi(\mathfrak{p}) + \bar{\chi}(\mathfrak{p}) )^2\lambda_j(p)^2}{p} + O(1) & = \Bbb{E}\sum_{ p \leq x} \frac{(\chi(\mathfrak{p})^2 + \bar{\chi}(\mathfrak{p})^2 + 2 )(\lambda_j(p^2)+1)\frac{1}{2}(1 + \eta_E(p))}{p} + O(1)\nonumber\\
&= \sum_{ p \leq x} \frac{ (\lambda_j(p^2)+1) (1 + \eta_E(p))}{p} + O(1).
 \end{align}
Recall that $\eta_E$ is the quadratic character associated to the extension $E/\Bbb{Q}$ by class field theory, and put $\theta_E = 1 \boxplus\eta_E$. Then \eqref{split-variance} is $\text{var}_{j, D}(x)+O(1)$, where 
$$\text{var}_{j, D}(x) = \log\log x + \log L(1, \eta_E) + \log(1, \text{Ad}\, \pi_j \times \theta_E). \quad $$
The inert sum $ S_{\text{in}}(\pi, \chi)$ is independent of $\chi$ and can be written as \[
  S_{\text{in}}(\pi_j, \chi)  = \sum_{ p^2 \leq x} \frac{(\lambda_j(p^2)-1)\frac{1}{2}(1 - \eta_E(p))}{p} = \mu_{j, D}(x) + O(1),
\]
where
\begin{equation}\label{mu-heur}
\mu_{j, D}(x) = \frac12\log L(1, \eta_E) +\frac12\log L(1, \text{Ad}\, \pi_j)-\frac12\log L(1, \text{Ad}\, \pi_j\times \eta_E) -\frac12 \log \log x.
\end{equation}
Assuming that the sums over $p$ behave like independent Gau{\ss}ian random variables, we conclude
\begin{displaymath}
\begin{split}
\Bbb{E}(L(1/2, \pi_j \times \chi))& \asymp \Bbb{E}\Big(\exp( S_{\text{sp}}(\pi_j, \chi))\Big) \Bbb{E}\Big( \exp ( S_{\text{in}}(\pi_j, \chi))\Big) \\
& \asymp  \exp\Big(\frac{1}{2}\text{var}_{j, D}(x)\Big) \exp\big( \mu_{j, D}(x) \big) = L(1, \eta_E)L(1, \text{Ad}\, \pi_j). 
 \end{split}
 \end{displaymath}
Assuming now that the two ``events'' $\log L(1/2, \pi_1 \times \chi)$ and $\log L(1/2, \pi_2\times \chi)$ are independent, we obtain by the same argument
\begin{displaymath}
\begin{split}
\Bbb{E}&\Big((L(1/2, \pi_1 \times \chi)L(1/2, \pi_2\times \chi))^{1/2}\Big)\\
& \asymp \Bbb{E}\Big(\exp(\textstyle\frac{1}{2} S_{\text{sp}}(\pi_1, \chi))\Big)\Bbb{E}\Big(\exp(\textstyle\frac{1}{2} S_{\text{sp}}(\pi_2, \chi))\Big) \Bbb{E}\Big(\exp(\textstyle\frac{1}{2} S_{\text{in}}(\pi_1, \chi))\Big)\Bbb{E}\Big(\exp(\textstyle\frac{1}{2} S_{\text{in}}(\pi_2, \chi))\Big) \\
& \asymp \exp\Big( \frac{\text{var}_{1, D}(x) + \text{var}_{2, D}(x)}{4}\Big) \exp\Big( \frac{\mu_{1, D}(x) + \mu_{2, D}(x)}{2}\Big) \asymp  \frac{(\mathcal{L}_1 \mathcal{L}_2)^{1/8}}{(\log x)^{1/4}},
 \end{split}
 \end{displaymath}
where
$$\mathcal{L}_j = \frac{L(1, \eta_E)^3 L(1, \text{Ad}\, \pi_j)^3}{L(1, \text{Ad}\, \pi_j \times \eta_E)}. $$ 
This probabilistic model suggests a final saving of $(\log x)^{-1/4} = (\log {D})^{1/4 + o(1)}$, in agreement with Theorem \ref{thm3}(a). 

\begin{remark}\label{rem4}
The above computations are sensitive to $\pi_j$ being cuspidal. In particular, the important term $-\frac{1}{2} \log\log x$ in \eqref{mu-heur} is a reflection of the fact that $\lambda_j(p^2)$ oscillates unlike, for instance, $\tau(p^2)$. 
\end{remark}

The probabilistic model just described shows what one can reasonably expect, and as such it is a useful tool in stress-testing a proof strategy. But of course these heuristics are far from a proof. In an important paper on moments of the Riemann zeta function \cite{So}, Soundararajan made the key observation that \eqref{approx} can in fact be made precise as an \textit{upper bound} for relatively small $x$. 
%It was generalized to $L$-functions in the Selberg class in \cite{Ch} and also forms part of the backbone of the analysis in \cite{LR}. 
%Note that the average over the class group exhibits perfect orthogonality as long as $p < {D}/4$: the only principal ideals of norm $< {D}/4$ are of the form $(n)$, $n \in \Bbb{N}$, i.e.\ induced from ideals in the base field $\Bbb{Q}$, cf.\ Lemma \ref{orth} below. 
Then %, instead of directly investigating the variance and expectation as above, 
one proceeds by computing very high moments 
\[
\frac{1}{|{\rm Cl}_E|} \sum_{\chi  \in {\rm Cl}_E^\vee}   \big(\log L(1/2, \pi_1 \times \chi) + \log L(1/2, \pi_2 \times \chi)\big)^{k},
\]
for $k$ as large as about $\log\log {D}$. From the moments we get   sufficient information on the  distribution function of $\log L(1/2, \pi_1\times  \chi) + \log L(1/2, \pi_2 \times \chi)$ (which supports the Gau{\ss}ian heuristics), and hence also the desired $k=1/2$-moment. We will formalize this argument in Section \ref{sec9}.

\begin{remark}\label{rem:correction-factor}
We will see later that the probabilistic model of this section is not entirely correct: the events $\log L(1/2, \pi_1 \times \chi)$ and $\log L(1/2, \pi_2\times \chi)$ are \emph{not} independent, and their correlation is measured by the $L$-value $L(1, \pi_1 \times\pi_2 \times \theta_E)$ of degree 8. That this value is well-defined is a consequence of our assumption in Theorem \ref{thm1} that both $\pi_j$ are distinct (and at least one is cuspidal). The presence of this correlation $L$-value has no influence on the power of $\log x$. We see, however, that it is  important to analyze the $L$-values at $1$ very carefully, which is even on GRH a subtle matter. We refer the reader to the discussion in Sections \ref{sec42} and \ref{end}.
\end{remark}

\section{$L$-functions}\label{sec6}

 In this section, we summarize (and in some cases prove) the various analytic properties of $L$-functions which will be necessary in the proof of Theorem \ref{thm3}. Throughout, we consider two cuspidal automorphic representations $\pi_1, \pi_2$ for ${\rm PGL}_2$ of squarefree levels $N_1, N_2$ and analytic conductors  $Q_{\pi_1}\asymp N_1 \lambda_{\pi_2}^2$, $Q_{\pi_2}\asymp N_2 \lambda_{\pi_1}^2$, where the spectral parameters $\lambda_{\pi_i}$ are defined in \S\ref{sec:Weyl}. 

\subsection{Generalities}\label{sec41} We denote by $\lambda_j(n)$, $j=1, 2$, the Hecke eigenvalues of $\pi_j$ and record the Hecke relation
\begin{equation}\label{mult}
\lambda_j(p^2) = \lambda_j(p)^2 - \psi_j(p)
\end{equation}
for a prime $p$ where $\psi_j$ is the trivial character modulo $N_j$. This relation will be used repeatedly throughout our argument.

Recall the Arakelov class group $\widetilde{\rm Cl}_E$ discussed in Section \ref{sec:ACG}.  Let $\chi \in \widetilde{\rm Cl}_E^\vee$ be an everywhere unramified idele class character. By automorphic induction, we may view $\chi$, when convenient, as a theta series of weight 1 if $E$ is imaginary and of weight 0 if $E$ is real. We denote its Dirichlet coefficients by $a_{\chi}(n)$. Note that $\chi$ is in particular trivial on ideals $(n) \subseteq \mathcal{O}_E$ with $n \in \Bbb{N}$, i.e.\ ideals  induced from $\Bbb{Q}$. Since $N_1, N_2$ are squarefree,  the Dirichlet   series expansions are given by 
\begin{equation}\label{dir1}
\begin{split}
  & L(s, \pi_j \times \chi) = L(2s, \eta_E)\sum_n \frac{\lambda_{j}(n)a_{\chi}(n)}{n^s},\\
   & L(s, \text{Ad} \, \pi_j) = \zeta^{(N_j)}(2s)\sum_{n} \frac{\lambda_j(n^2)}{n^s}, \quad L(s, \text{Ad} \, \pi_j \times \eta_E) = \zeta^{(DN_j)}(2s)\sum_{n} \frac{\lambda_j(n^2)\eta_E(n)}{n^s}
   \end{split}
   \end{equation}
 %need no correction \textcolor{red}{maybe make it clearer what we mean by correction factors} factors 
 (see \cite[Section 2]{WLi} and \cite[Section 2.3.3]{BFKMMS}),  and we have the Euler product
\begin{equation}\label{RS-euler}
L(s, \pi_j \times \chi) = \prod_{p} \prod_{i , k= 1, 2}  \Big( 1 - \frac{\alpha_j(p, i) \xi_{\chi}(p, k)  }{p^s}\Big)^{-1}
\end{equation}
where 
\begin{equation}\label{xi} 
\{\xi_{\chi}(p, 1), \xi_{\chi}(p, 2)\} = \begin{cases} \{\chi(\mathfrak{p}), \bar{\chi}(\mathfrak{p})\}, & (p) = \mathfrak{p} \bar{\mathfrak{p}}, \mathfrak{p} \not= \bar{\mathfrak{p}},\\\{\chi(\mathfrak{p}), 0\}, & (p) = \mathfrak{p}^2,\\ \{-1, 1\}, & \eta_E(p) = -1,\end{cases}
\end{equation}
 are the Satake parameters of $\chi$ and $\alpha_j(p, i)$ are the Satake parameters of $\pi_j$. If $(p) = \mathfrak{p}^2$, then $\chi(\mathfrak{p})^2 = 1$, and we conclude
 \begin{equation}\label{sat}
   \xi_{\chi}(p, 1)^2 + \xi_{\chi}(p, 2)^2 = \begin{cases} \xi_{\chi^2}(p, 1) + \xi_{\chi^2}(p, 2), & p \text{ split},\\
   1, & p \text{ ramified},\\
   2, & p \text{ inert}. \end{cases}
 \end{equation}

%For a general $L$-function $L(s, \Pi)$ we denote by $Q_{\Pi}$ its conductor. 
 Let $\theta_E = 1 \boxplus \eta_E$. We use the explicit computations in \cite{WLi} again to conclude
\begin{displaymath}
\begin{split}
L(s, \pi_1 \times \pi_2 \times \theta_E)& = L(s, \pi_1 \times \pi_2) L(s, \pi_1 \times \pi_2 \times \eta_E) \\
&= \zeta^{(N_1N_2)}(2s)\zeta^{(N_1N_2D)}(2s)\sum_{n} \frac{\lambda_1(n) \lambda_2(n)}{n^s} \sum_{n} \frac{\lambda_1(n) \lambda_2(n)\eta_E(n)}{n^s}\\
&\quad \prod_{\substack{p \mid (N_1, N_2)\\ p \nmid D}} \Big(1 - \frac{\lambda_1(p)\lambda_2(p)}{p^{s-1}}\Big)\Big(1 - \frac{\lambda_1(p)\lambda_2(p)\eta_E(p)}{p^{s-1}}\Big).
\end{split}
\end{displaymath}
Note that $\lambda_p := \lambda_1(p)\lambda_2(p) p \in \{\pm 1\}$ for $p \mid (N_1, N_2)$, so that 
\begin{align}\label{dir2}
L(s, \pi_1 \times \pi_2 \times \theta_E) = &\sum_{n} \frac{\lambda_1(n) \lambda_2(n)}{n^s} \sum_{n} \frac{\lambda_1(n) \lambda_2(n)\eta_E(n)}{n^s} \nonumber\\
&\zeta^{(N_1N_2)}(2s)\zeta^{(N_1N_2D)}(2s)\prod_{\substack{p \mid (N_1, N_2)\\ p \nmid D}} \Big(1 - \frac{\lambda_p(1 + \eta_E(p))}{p^{s}} + \frac{\eta_E(p)}{p^{2s}}\Big).
\end{align}

\subsection{$L$-functions at $s=1$ on GRH}\label{sec42}

We must be very careful with bounds for $L$-functions on the 1-line. On GRH \emph{and} the Ramanujan conjecture all of them are $(\log\log D)^{O(1)}$ from above and below. Without the Ramanujan conjecture, existing bounds are exponential in $\log D$ which is problematic. Luckily,   we do have good bounds in some situations and this suffices for our application. The following lemma is well-known (see e.g. \cite[Lemma 5.3]{LR} for a special case) and goes essentially back to Littlewood. For convenience we provide a complete proof.

\begin{lemma}\label{tit} Let $L(s, \pi)$ be a holomorphic $L$-function of fixed degree $d$  and analytic conductor $Q_{\pi}$ in the extended\footnote{i.e.\ without assuming the Ramanujan bounds} Selberg class, not necessarily primitive, with Dirichlet coefficients satisfying $\lambda_{\pi}(n)$.  Assume GRH for $L(s, \pi)$ and assume that the Satake parameters $\alpha_{\pi}(p, j)$, $1 \leq j \leq d$, satisfy $|\alpha_{\pi}(p, j)| \leq p^{1/2 - \rho}$ for some $\rho > 0$. Then
\begin{equation}\label{log-series}
  \sum_{p \leq x} \frac{\lambda_{\pi}(p)}{p} = \log L(1, \pi) + O(1), \quad x \geq (\log Q_{\pi})^{2+\varepsilon}.
\end{equation}
Moreover, for fixed $\alpha > 0$ we have 
\begin{equation}\label{titch1}
\begin{split}
&  L(1, \pi) \gg (\log\log Q_{\pi})^{-\alpha}, \quad \text{if } \lambda_{\pi}(p) \geq -\alpha \text{ for all } p,\\
 & L(1, \pi) \ll (\log\log Q_{\pi})^{\alpha}, \quad \text{if } \lambda_{\pi}(p) \leq \alpha \text{ for all } p.\\
\end{split}  
\end{equation}
\end{lemma}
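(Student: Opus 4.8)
The plan is to follow the classical argument of Littlewood, adapted to the extended Selberg class under GRH.

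First I would establish \eqref{log-series}. Taking the logarithmic derivative of the Euler product, we have $-\frac{L'}{L}(s,\pi) = \sum_{n} \Lambda_\pi(n) n^{-s}$ for $\Re s > 1$, where $\Lambda_\pi(n)$ is supported on prime powers and $\Lambda_\pi(p) = \lambda_\pi(p) \log p$ up to the usual normalization (more precisely $\sum_j \alpha_\pi(p,j) \log p$, which equals $\lambda_\pi(p)\log p$). The contribution of prime powers $p^k$ with $k \geq 2$ is $O(1)$ absolutely convergent at $s=1$ using the Satake bound $|\alpha_\pi(p,j)| \leq p^{1/2-\rho}$ (this is where the hypothesis $\rho > 0$ enters: it guarantees $\sum_{k\geq 2}\sum_p |\alpha_\pi(p,j)|^k p^{-k} < \infty$). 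Thus it suffices to compare $\sum_{p \leq x} \lambda_\pi(p)/p$ with $\log L(1,\pi)$. The standard device is a contour integral / Perron-type truncation: write $\sum_{p\leq x}\lambda_\pi(p)\log p \cdot p^{-1}$ against a smooth cutoff and shift the contour, using GRH to push past $\Re s = 1/2$; the analytic conductor enters through the number and location of trivial zeros and through the functional equation, giving the admissible range $x \geq (\log Q_\pi)^{2+\varepsilon}$. Partial summation then converts the weighted sum $\sum \lambda_\pi(p)\log p/p$ into $\sum \lambda_\pi(p)/p$ with an $O(1)$ error, and the tail $\sum_{p > x}$ is controlled again by GRH (the partial sums of $\lambda_\pi(p)p^{-1/2}$ are $O((\log Q_\pi)^{1+\varepsilon})$ say, by the explicit formula), so that extending the sum to infinity costs only $O(1)$ once $x$ is past the threshold. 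Finally $\sum_p \lambda_\pi(p)/p = \log L(1,\pi) + O(1)$ because $\log L(s,\pi) = \sum_p \lambda_\pi(p) p^{-s} + O(1)$ uniformly for $\Re s \geq 1$ (again by the higher-prime-power bound), and $L(1,\pi) \neq 0$ on GRH so $\log L(1,\pi)$ is finite.

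Given \eqref{log-series}, the bounds \eqref{titch1} are immediate: if $\lambda_\pi(p) \geq -\alpha$ for all $p$, then
\[
\log L(1,\pi) = \sum_{p \leq x}\frac{\lambda_\pi(p)}{p} + O(1) \geq -\alpha \sum_{p \leq x}\frac{1}{p} + O(1) = -\alpha \log\log x + O(1),
\]
and choosing $x \asymp (\log Q_\pi)^{2+\varepsilon}$ gives $\log L(1,\pi) \geq -\alpha\log\log\log Q_\pi + O(1)$, i.e. $L(1,\pi) \gg (\log\log Q_\pi)^{-\alpha}$ after adjusting $\alpha$ by an arbitrarily small amount (or keeping $\alpha$ and absorbing the discrepancy between $\log\log x$ and $\log\log\log Q_\pi$ — note $\log\log x = \log\log\log Q_\pi + O(1)$, so in fact the clean statement with the same $\alpha$ works up to the $O(1)$). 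The upper bound in \eqref{titch1} is symmetric, using $\lambda_\pi(p) \leq \alpha$.

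The main obstacle is the truncation step in the proof of \eqref{log-series}: getting the explicit range $x \geq (\log Q_\pi)^{2+\varepsilon}$ requires care with the contour shift and with bounding $L'/L$ on vertical lines just to the right of the critical line, uniformly in the conductor, which is the standard but technical heart of Littlewood-type conditional estimates. One must control the number of zeros in dyadic windows (via GRH and the Riemann–von Mangoldt-type formula for the Selberg class) and the size of $\log L$ near the critical line; the key input is that on GRH, $\log|L(\sigma + it, \pi)| \ll \frac{\log Q_\pi}{\log\log Q_\pi} \cdot (\text{something decaying as } \sigma \to 1)$ type bounds, or more simply the second-moment/explicit-formula bound on partial sums of $\lambda_\pi(p)p^{-1/2}$. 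Since the excerpt says this "goes essentially back to Littlewood" and cites \cite[Lemma 5.3]{LR} for a special case, I expect the proof to be a fairly direct transcription of that argument with bookkeeping of the degree $d$ and conductor $Q_\pi$, the only genuinely new wrinkle being the extended Selberg class setting where the hypothesis $|\alpha_\pi(p,j)| \leq p^{1/2-\rho}$ replaces the unavailable Ramanujan bound and is used precisely to make the prime-power tails converge.
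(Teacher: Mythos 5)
Your overall strategy (a Perron-type contour integral, a shift of the contour past $\Re s = 1/2$ justified by GRH, and the immediate deduction of \eqref{titch1} from \eqref{log-series} by inserting $\lambda_\pi(p)\geq -\alpha$ and choosing $x\asymp(\log Q_\pi)^{2+\varepsilon}$) matches the paper's, and your discussion of where $\rho>0$ enters (absolute convergence of the prime-power tails) is correct. However, there is a genuine gap in the route you take to \eqref{log-series}: you set up the contour integral with the logarithmic derivative $-L'/L(s,\pi)$, but Perron applied to $-L'/L(s+1,\pi)\cdot x^s/s$ picks up the residue $-L'/L(1,\pi)$ at $s=0$, not $\log L(1,\pi)$. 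These are not the same quantity, and converting one to the other requires a further integration in $s$, not partial summation in $p$. Relatedly, the claim that partial summation converts $\sum_{p\le x}\lambda_\pi(p)\log p/p$ into $\sum_{p\le x}\lambda_\pi(p)/p$ with an $O(1)$ error does not hold as stated: the Stieltjes formula
\[
\sum_{p\le x}\frac{\lambda_\pi(p)}{p}=\frac{A(x)}{\log x}+\int_2^x\frac{A(y)}{y(\log y)^2}\,dy, \qquad A(y)=\sum_{p\le y}\frac{\lambda_\pi(p)\log p}{p},
\]
shows that the unweighted sum depends on the full history of $A(y)$ (the kernel $y^{-1}(\log y)^{-2}$ is concentrated at small $y$), so boundedness of the integral is not automatic and the constant is not identified as $\log L(1,\pi)$. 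Finally, your closing assertion that ``$\sum_p\lambda_\pi(p)/p=\log L(1,\pi)+O(1)$'' is essentially the content of the lemma and cannot be invoked as a known fact; the series does not converge absolutely under only the hypothesis $|\alpha_\pi(p,j)|\le p^{1/2-\rho}$.

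The paper sidesteps all of this by applying Perron directly to $\log L(s+1,\pi)$ against $x^s/s$, so that the residue at $s=0$ is exactly $\log L(1,\pi)$ and the $n=p$ terms of the Dirichlet series give $\sum_{p\le x}\lambda_\pi(p)/p$ directly, with the $n=p^k$, $k\ge2$, terms absorbed into the $O(1)$ using $\rho>0$. The bound used to control the shifted contour is also simpler than what you invoke: not a Chandee/Soundararajan $\log Q/\log\log Q$ bound, but a Borel--Carath\'eodory inequality plus convexity, giving $\log L(s+1,\pi)\ll(\sigma+1/2)^{-1}\log(Q_\pi(1+|t|))$ for $\sigma>-1/2$. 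After shifting to $\Re s=-1/2+\delta$ and choosing $T=x^2\log Q_\pi$, $\delta=1/\log x$, the error is $O(1)$ precisely when $x\ge(\log Q_\pi)^{2+\varepsilon}$. Your deduction of \eqref{titch1} from \eqref{log-series} is then identical to the paper's.
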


\begin{proof}  Let $T > 2$ and $s = \sigma + it$. We start with Perron's formula
$$\frac{1}{2\pi i}\int_{1-iT}^{1 + i T} \log L(s+1, \pi) \frac{x^s}{s} ds= \sum_{p \leq x} \frac{\lambda_{\pi}(p)}{p}  + O\Big(1 + \frac{x \log x}{T}\Big). $$
By the Borel-Caratheodory inequality and the convexity bound we have
$$\log L(s+1, \pi) \ll (\sigma +1/2)^{-1} \log \big(Q_{\pi}(1 + |t|)\big)$$
for $\sigma > -1/2$. We shift the contour to $\Re s = -1/2 + \delta$ for some small $\delta > 0$. At $s = 0$ we collect a simple pole with residue $\log L(1, \pi)$. The horizontal contours contribute
$$\ll  \int_{-1/2 + \delta  }^{1  } |\log L(\sigma +1 + i T, \pi)| \frac{x^{\sigma}}{T}d\sigma \ll \frac{x \log (Q_{\pi}T)}{\delta T}.$$
The vertical contour contributes 
$$ \ll \int_{-T  }^T |\log L(1/2 + \delta + i t, \pi)| \frac{x^{1/2 + \delta}}{1 + |t|}dt \ll \frac{ \log (Q_{\pi}T) \log T}{\delta x^{1/2 + \delta}}.$$
With $T = x^2 \log Q_{\pi}$ and $\delta = 1/\log x$ we conclude 
\begin{displaymath}
\begin{split}
\log L(1, \pi) & =  \sum_{p \leq x} \frac{\lambda_{\pi}(p)}{p}  +O\Big(    1 + \frac{x\log x}{T} +   \frac{x\log (Q_{\pi}T)}{\delta T}+ \frac{  \log (Q_{\pi}T) \log T}{\delta x^{1/2 - \delta}}\Big)\\
& = \sum_{p \leq x} \frac{\lambda_{\pi}(p)}{p}  +O\Big(  1 + \frac{\log x( \log Q_{\pi} + \log x)(\log x + \log\log Q_{\pi})}{x^{1/2 }}\Big). 
\end{split}
\end{displaymath}
The error term is  $O(1)$ if $x \geq (\log Q_{\pi})^{2+\varepsilon}$. If $\lambda_{\pi}(p) \geq -\alpha$ we choose $x =( \log Q_{\pi})^3$ getting
$$ L(1, \pi) \gg \exp (- \alpha \log\log x  ) \gg(\log\log Q_{\pi})^{-\alpha}.$$
An analogous argument works for $\lambda_{\pi}(p) \leq \alpha$.  \end{proof}

 In particular, we have
\begin{equation}\label{titch2}
\begin{split}
 \frac{1}{\log\log {D}} \ll L(1, \eta_E) \ll \log\log {D}
\end{split}  
\end{equation}
(which was proved by Littlewood). Moreover, for $\pi_j$ as in the beginning of this section, a key observation is that the Hecke relations \eqref{mult} imply  that $\lambda_j(p^2) \geq -1$, and then obviously also $\lambda_j(p^2)(1+\eta_E(p))\geq -2$. Thus  
\begin{equation}\label{titch3}
\begin{split}
  & \frac{1}{\log\log  Q_{\pi_j}} \ll L(1, \text{Ad} \, \pi_j), \quad  \frac{1}{(\log\log  Q_{\pi_j}{D})^2} \ll L(1, \text{Ad} \, \pi_j \times \theta_E). \\
% & \frac{1}{(\log\log  C_{f}D)^2} \ll  L(1, \text{Ad} f \times \theta_E),
\end{split}  
\end{equation}
%where we recall $\theta_E = 1 \boxplus \eta_E$. 
We complement this with the additional bounds (cf.\ \cite[Lemma 5.5]{LR})
\begin{equation}\label{add}
\begin{split} 
  & \sum_{p \leq x} \frac{\lambda_j(p)^2}{p}, \quad  \sum_{p \leq x} \frac{\lambda_j(p^2)}{p}  \ll \log\log x + (\log Q_{\pi_j})^{1/3}.    % & L(1, \pi) \ll \exp((\log C_{\pi})^{1/8 + \varepsilon}).
\end{split}   
\end{equation}
%and
%$$L(1, \text{Ad } \pi_j) \ll \exp(c (\log Q_{\pi})^{1/2}

\subsection{$L$-functions at $s=1/2$ on GRH}

We now prove a precise version (on GRH) of the heuristic formula \eqref{approx}. A crucial ingredient will be the following special case of \cite[Theorem 2.1]{Ch}.

\begin{lemma}\label{lem:Chandee}
Assume GRH for $L(1/2, \pi_j\times \chi)$. % and suppose that $L(s, f \times \chi)$ has no pole at $s=1$. 
Then for any $x>1$ we have
\[
\log L(1/2, \pi_j\times \chi) \leq \sum_{p^n \leq x}\sum_{i=1}^4 \frac{ \alpha_{\pi_j \times \chi}(p, i)^n}{n p^{n(1/2 + 1/\log x)}} \frac{\log(x/p^n)}{\log x} + 10 \frac{\log Q_{\pi_j \times \chi}}{\log x},
\]
where $\{\alpha_{\pi_j \times \chi}(p, i)\}$ are the four Satake parameters of $\pi_j \times \chi$ which can be read off from \eqref{RS-euler}--\eqref{xi}. %If $\chi$ is trivial and $f$ is non-cuspidal, we have to add a term $ 10 \log\log x$ on the right hand side. % and $C_{f, D}$ is the conductor of $L(1/2, f \times \chi)$.
\end{lemma}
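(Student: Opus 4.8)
The plan is to deduce the lemma directly from the general conditional upper bound of Chandee \cite[Theorem 2.1]{Ch}; essentially all the work lies in checking that $L(s,\pi_j\times\chi)$ meets the hypotheses of that theorem and in matching the quantities occurring there with the ones in the statement. First I would recall that $L(s,\pi_j\times\chi)$ is the degree-four Rankin--Selberg $L$-function $L(s,\pi_j\times\pi_\chi)$, where $\pi_\chi$ is the automorphic induction from $E$ to $\Q$ of $\chi$ --- equivalently the weight-one or weight-zero theta series attached to $\chi$ (cf.\ Section \ref{sec41}) --- whose Euler product and Satake parameters $\alpha_{\pi_j}(p,i)\,\xi_\chi(p,k)$ are exactly those displayed in \eqref{RS-euler}--\eqref{xi}. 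Next I would check that this $L$-function is entire: when $\chi^2\neq 1$ the representation $\pi_\chi$ is cuspidal, and $L(s,\pi_j\times\pi_\chi)$ has no pole unless $\pi_j\cong\widetilde{\pi_\chi}$, i.e.\ $\pi_j$ is the dihedral form induced from a Hecke character of $E$; since any such form has conductor divisible by $D$ whereas $N_{\pi_j}$ is fixed, this is excluded for $D>N_{\pi_j}$ (which we may assume, the lemma being applied only for large $D$), while if $\chi^2=1$ then $L(s,\pi_j\times\chi)$ splits as a product of two entire degree-two twists of $\pi_j$ by Dirichlet characters. Finally, since $N_{\pi_j}$ is squarefree, at each prime the Satake parameters of $\pi_j\times\chi$ are bounded in absolute value by $p^{1/2}$ (at $p\mid N_{\pi_j}$ one has $|\alpha_{\pi_j}(p,i)|\le p^{-1/2}$ and $|\xi_\chi(p,k)|\le 1$), so $L(s,\pi_j\times\chi)$ is an entire degree-four $L$-function with an analytic conductor $Q_{\pi_j\times\chi}$ built from the conductors and archimedean parameters of $\pi_j$ and $\chi$ --- exactly the kind of object to which \cite[Theorem 2.1]{Ch} applies.

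With these points in hand the lemma should follow at once, but it is worth recalling the mechanism, since this is where GRH enters. One starts from the Hadamard factorization, which gives for $\Re s$ slightly above $1/2$
\[
-\frac{L'}{L}(s,\pi_j\times\chi)=\sum_\rho\frac{1}{s-\rho}-\tfrac12\log Q_{\pi_j\times\chi}+O(1),
\]
the sum running over the non-trivial zeros $\rho$. Integrating this against the kernel that produces the Fej\'er-type weight $\frac{\log(x/p^n)}{\log x}$ --- with the shift $1/\log x$ inserted to make the prime sum converge --- writes $\log L(1/2,\pi_j\times\chi)$ as the prime-power sum in the statement, plus a term bounded by a constant times $\log Q_{\pi_j\times\chi}/\log x$ coming from the $\log Q$ contribution and the archimedean factors, minus a sum over the zeros. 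Invoking GRH puts every $\rho=\tfrac12+i\gamma$ on the critical line, and for this choice of kernel each zero contributes a non-negative amount to the subtracted term, so discarding the zero sum yields the asserted upper bound. (If $L(1/2,\pi_j\times\chi)=0$ the left-hand side is $-\infty$ and nothing needs proving; otherwise $L(1/2,\pi_j\times\chi)>0$ by \eqref{twisted-Walds}, so the left-hand side is real, as is the right-hand side since the Satake parameters of $\pi_j\times\chi$ occur in complex-conjugate pairs.) Tracking Chandee's explicit constant through this computation and crudely bounding the remaining lower-order terms leaves a coefficient comfortably below $10$ on the conductor term, and replacing it by $10$ both absorbs those terms and makes the bound trivial for very small $x$, where the conductor term already exceeds the left-hand side.

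The only thing requiring genuine care --- the sole content beyond citing \cite[Theorem 2.1]{Ch} --- is the bookkeeping of the archimedean and ramified local data of $\pi_j\times\chi$: one must confirm that the gamma factor and the bad Euler factors affect the estimate only through $\log Q_{\pi_j\times\chi}$, with a constant that rounds up to at most $10$, uniformly in $x>1$. Once the hypotheses verified in the first paragraph are granted, this is exactly what Chandee's theorem supplies, so I do not expect any real obstacle here.
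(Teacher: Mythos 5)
Your proposal is correct and takes the same route as the paper, which simply cites \cite[Theorem 2.1]{Ch} without further proof; your verification of the hypotheses (entirety of $L(s,\pi_j\times\chi)$ away from the degenerate dihedral case, the Satake bounds at ramified and unramified primes, and the matching of conductors) is exactly the checking one ought to do when invoking Chandee's result for this Rankin--Selberg $L$-function. The remark that the constant $10$ absorbs the lower-order archimedean/ramified contributions and renders the bound vacuous for small $x$ is also consistent with how the lemma is used downstream.
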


%\begin{proof}
%This is a special case of \cite[Theorem 2.1]{Ch}. %, which is based on a similar observation by Soundararajan in the main proposition in \cite{So}. %If $f = E(., 1/2 + it)$ is non-cuspidal and $\chi$ is trivial, then $\log L(1/2, f\times \chi)= \log |L(1/2, f\times \chi)|  = \log |\zeta (1/2 + it)| + \log |\zeta (1/2 - it)| + \log |L(1/2 + it, \eta_E)| + \log |L (1/2 - it, \eta_E)|$. For the two $\zeta$-terms we can apply the main proposition in \cite{Sound} if $x \leq T^2$, and otherwise estimate the contribution of $T^2 \leq p^n \leq x$ trivially. 
%\end{proof}

Upon choosing $x = \log Q_{\pi_j\times \chi}$ and using the prime number theorem, we conclude 
%A direct consequence (cf.\ \cite[Remark 1]{Ch})
\begin{equation}\label{54}
 \log L(1/2, \pi_j\times \chi) \ll  \frac{\log Q_{\pi_j\times \chi}}{\log\log Q_{\pi_j\times \chi}}.
\end{equation}
%for $D$ sufficiently large. 
%\begin{cor}\label{54} Assume GRH for $L(1/2, f\times \chi)$. Then
%\[
%\log L(1/2, f\times \chi) \ll \frac{\log C_{f, D}}{\log\log C_{f, D}}.
%\]
%\end{cor}

%\begin{proof}
%See \cite[Remark 1]{Ch}.
%\end{proof}

%We can spell out the sum over primes in the previous lemma more explicitly. %For $\chi \in \widehat{C_D}$ let $$a_{\chi}(n)  = \sum_{N\mathfrak{a} = n} \chi(\mathfrak{a})$$ be the $n$-th Dirichlet coefficient of $L(s, \chi)$. We have
\begin{cor}\label{3line}
Assume GRH for $L(1/2, \pi_j\times \chi)$. Let $\varepsilon>0$ and suppose that 
\begin{equation*}%\label{range-x}
(\log Q_{\pi_j \times \chi})^{4+\varepsilon}\leq x. 
\end{equation*}
Then 
\begin{equation}\label{3displ}
\begin{split}
\log L(1/2, \pi_j\times \chi) 
\leq & \sum_{  p \leq x}  \frac{a_{\chi}(p)\lambda_j(p)}{ p^{1/2 + 1/\log x}} \frac{\log(x/p)}{\log x}+\frac{1}{2}\sum_{\substack{\eta_E(p) = 1\\ p^2 \leq x}}  \frac{a_{\chi^2}(p)(\lambda_j(p^2)-\psi_j(p))}{ p^{1 + 2/\log x}} \frac{\log(x/p^2)}{\log x}\\ 
+&\mu_{j,D}(x)+10 \frac{\log Q_{\pi_j \times \chi}}{\log x}+O\big( \log\log\log Q_{\pi_j}\big),
\end{split}
\end{equation}
where 
\begin{equation}\label{defmuj}
\mu_{j, D}(x)=\frac12\log L(1, \eta_E) +\frac12\log L(1, \text{{\rm Ad}}\, \pi_j)-\frac12\log L(1, \text{{\rm Ad}}\,  \pi_j \times \eta_E) -\frac12 \log \log x
\end{equation}
  satisfies
\begin{equation}\label{boundmu}
\mu_{j, D}(x)  \ll \log\log x  + (\log Q_{\pi_j})^{1/3}. 
\end{equation}
We also have
\begin{equation}\label{also}
\sum_{\substack{\eta_E(p) = 1\\ p^2 \leq x}}  \frac{a_{\chi^2}(p)(\lambda_j(p^2)-\psi_j(p))}{ p^{1 + 2/\log x}} \frac{\log(x/p^2)}{\log x}    \ll \log\log x  + (\log Q_{\pi_j})^{1/3}.
\end{equation}
%The same bound applies to the second line of \eqref{3displ}. 
\end{cor}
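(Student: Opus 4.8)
The plan is to begin from Lemma~\ref{lem:Chandee} and organise the prime–power sum there according to the value of $n$: identify the $n=1$ term and the ``split'' part of the $n=2$ term with the first two sums in \eqref{3displ}, collect the inert and ramified part of the $n=2$ term into $\mu_{j,D}(x)$, and discard the rest.

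Concretely, for $n=1$ one reads the Satake parameters off \eqref{RS-euler}--\eqref{xi}, getting $\sum_{i=1}^{4}\alpha_{\pi_j\times\chi}(p,i)=\bigl(\alpha_j(p,1)+\alpha_j(p,2)\bigr)\bigl(\xi_\chi(p,1)+\xi_\chi(p,2)\bigr)=\lambda_j(p)a_\chi(p)$, so the $n=1$ part is exactly the first sum in \eqref{3displ}. For $n=2$, the same bookkeeping, the Hecke relation \eqref{mult} in the form $\alpha_j(p,1)^2+\alpha_j(p,2)^2=\lambda_j(p)^2-2\psi_j(p)=\lambda_j(p^2)-\psi_j(p)$, and \eqref{sat} give $\sum_{i=1}^{4}\alpha_{\pi_j\times\chi}(p,i)^2=\bigl(\lambda_j(p^2)-\psi_j(p)\bigr)c_\chi(p)$, where $c_\chi(p)$ equals $a_{\chi^2}(p)$, $1$, or $2$ according as $p$ is split, ramified, or inert in $E$; the split part of the resulting $n=2$ sum is precisely the second sum in \eqref{3displ}. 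The terms with $n\ge3$ are $O(1)$: by the Kim--Sarnak bound $|\alpha_{\pi_j\times\chi}(p,i)|\le p^{7/64}$ (the Satake parameters of $\chi$ being of modulus $\le1$) they are dominated by $\sum_{n\ge3}\sum_p 4p^{-n(1/2-7/64)}$, which converges since $3(1/2-7/64)>1$.

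It remains to bound the inert and ramified part of the $n=2$ sum by $\mu_{j,D}(x)+O(\log\log\log Q_{\pi_j})$. First I discard the Chandee weight $g(2\log p/\log x)$, $g(u)=e^{-u}(1-u)$, at a one-sided cost of $O(1)$: on $[0,1]$ one has $0\le 1-g(u)\le 2u$, and since $\lambda_j(p^2)-\psi_j(p)\ge-2$ by \eqref{mult}, the terms where this quantity is non-negative can only lower the weighted sum, while those where it is negative contribute to the upper bound at most $\sum_{p\le\sqrt x}\tfrac{8\log p}{p\log x}\ll1$; this step is exactly where the fact that \eqref{3displ} is an inequality is used. What remains is $\sum_{\eta_E(p)=-1,\,p\le\sqrt x}\tfrac{\lambda_j(p^2)-1}{p}+\tfrac12\sum_{p\mid D,\,p\le\sqrt x}\tfrac{\lambda_j(p^2)-1}{p}$, with the replacement of $\psi_j(p)$ by $1$ costing $O\bigl(\sum_{p\mid N_j}1/p\bigr)=O(\log\log\log Q_{\pi_j})$. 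Using the class-field identity $\mathbf 1_{\mathrm{split}}-\mathbf 1_{\mathrm{inert}}=\eta_E$, this equals $\tfrac12\sum_{p\le\sqrt x}\tfrac{\lambda_j(p^2)(1-\eta_E(p))}{p}-\tfrac12\sum_{p\le\sqrt x}\tfrac{1-\eta_E(p)}{p}$, and by Mertens' theorem together with Lemma~\ref{tit} applied to $L(s,\mathrm{Ad}\,\pi_j)$, $L(s,\mathrm{Ad}\,\pi_j\times\eta_E)$ and $L(s,\eta_E)$ — all of which satisfy the Satake hypothesis there by Kim--Sarnak and admit the cutoff $\sqrt x\ge(\log Q_{\pi_j\times\chi})^{2+\varepsilon/2}$, their analytic conductors being $\ll Q_{\pi_j\times\chi}^{O(1)}$ — and reading the Dirichlet coefficients off \eqref{dir1}, this becomes $\tfrac12\log L(1,\mathrm{Ad}\,\pi_j)-\tfrac12\log L(1,\mathrm{Ad}\,\pi_j\times\eta_E)+\tfrac12\log L(1,\eta_E)-\tfrac12\log\log x+O(1)=\mu_{j,D}(x)+O(1)$. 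Assembling the four pieces yields \eqref{3displ}; the bounds \eqref{boundmu} and \eqref{also} then follow from \eqref{titch2}, \eqref{titch3}, the representation in Lemma~\ref{tit}, and \eqref{add}, the last of these bounding the second sum in \eqref{3displ} by $2\sum_{p\le\sqrt x}(|\lambda_j(p^2)|+1)/p$.

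The real work will be the disciplined control of lower-order terms rather than any single hard inequality. The weight removal must be carried out one-sidedly, via $\lambda_j(p^2)\ge-1$; the admissibility of the truncation at $\sqrt x$ has to be checked against the analytic conductors of the three auxiliary $L$-functions, which is precisely why the hypothesis asks for the $(4+\varepsilon)$th power of $\log Q_{\pi_j\times\chi}$ rather than the $(2+\varepsilon)$th, since $L(s,\mathrm{Ad}\,\pi_j\times\eta_E)$ has conductor of polynomial size in $D$; and the primes dividing $N_j$, where $\psi_j$ vanishes and the local factors are of Steinberg type, must be tracked so that the accumulated error stays within $O(\log\log\log Q_{\pi_j})$. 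None of these steps is deep, but they all have to be managed at once.
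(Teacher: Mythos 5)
Your argument follows the paper's proof in every essential step: invoke Lemma~\ref{lem:Chandee}, identify the $n=1$ part and the split portion of the $n=2$ part with the two explicit sums in \eqref{3displ}, collect the ramified and inert $n=2$ contribution into $\mu_{j,D}(x)$ via Lemma~\ref{tit} and Mertens, discard $n\ge 3$ via Kim--Sarnak, and establish \eqref{boundmu} and \eqref{also} by the same reversal plus \eqref{add}. Your one-sided removal of the Chandee weight is in fact slightly more careful than the paper's, which appeals to the pointwise inequality $(\lambda_j(p^2)-\psi_j(p))\tfrac12(1-\eta_E(p))+1\ge 0$ -- this can fail at inert $p$ with $|\lambda_j(p)|<1$, though the resulting error is still $O(1)$ and can be repaired by adding and subtracting $2$ rather than $1$; aside from this cosmetic point the two proofs coincide.
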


\begin{proof}
We can spell out the main term in Lemma \ref{lem:Chandee} explicitly. Indeed, the contribution from $n=1$ is the first term on the right hand side of \eqref{3displ}. The terms corresponding to $n \geq 3$ contribute $O(1)$  in view of the bound $\alpha_j(p, i)\ll p^{\delta}$ with $\delta < 1/6$ \cite{KS}.  %\textcolor{red}{F: we should quote somebody here, I guess Kim-Sarnak}
We use the Hecke relations \eqref{mult} for the  terms corresponding to $n=2$. Using \eqref{sat}, the split primes contribute the second term on the right hand side of \eqref{3displ}, while the ramified and inert primes contribute
\begin{equation*}%\label{55c}
\begin{split}
&\frac{1}{2}\sum_{\substack{p\mid D\\ p^2 \leq x}}  \frac{(\lambda_j(p^2)-\psi_j(p))}{ p^{1 + 2/\log x}} \frac{\log(x/p^2)}{\log x} + \frac12\sum_{\substack{\eta_E(p) = -1\\ p^2 \leq x}}  \frac{2( \lambda_j(p^2)-\psi_j(p))}{ p^{1 + 2/\log x}} \frac{\log(x/p^2)}{\log x}\\
& =  \sum_{  p^2 \leq x}  \frac{( \lambda_j(p^2)-\psi_j(p)) \frac{1}{2}(1 - \eta_E(p)) + 1 -1}{ p^{1 + 2/\log x}}\cdot  \frac{\log(x/p^2)}{\log x}.
 \end{split}
 \end{equation*}
 Since $( \lambda_j(p^2)-\psi_j(p)) \frac{1}{2}(1 - \eta_E(p)) + 1 \geq 0$, the previous display is at most
 \begin{equation}\label{log} 
\sum_{  p \leq \sqrt{x}}  \frac{( \lambda_j(p^2)-\psi_j(p)) \frac{1}{2}(1 - \eta_E(p))}{ p} +\sum_{  p \leq \sqrt{x}}  \frac{1}{ p} - \sum_{p \leq \sqrt{x}} \frac{ \log (x/p^2)}{p^{1 + 2/\log x} \log x}.
\end{equation}
From \eqref{log-series} and $\log\log\sqrt{x}=\log\log x +O(1)$, the first term in \eqref{log} is
\[
\frac{1}{2}\log \frac{L(1, \eta_E) L(1, \mathrm{Ad}\, \pi_j)}{L(1, \mathrm{Ad}\, \pi_j \times \eta_E) \log x}  +O\Big(1 + \sum_{p \mid N_j} \frac{1}{p}\Big),
\]
provided that $x \geq (\log Q_{\pi_j \times \chi})^{4+\varepsilon}$, while the second and third  terms  in \eqref{log} are
\[
 \log \log \sqrt{x} - \sum_{p \leq \sqrt{x}} \frac{1}{p} \Big( 1 - O\Big( \frac{\log p}{\log x}\Big)\Big) + O(1) = O(1).
\]
This establishes \eqref{3displ}, observing that $\sum_{p \mid N_j} 1/p \ll \log\log \log N_j \leq \log\log \log Q_{\pi_j} $. Reversing the analysis, we obtain from \eqref{log-series} and \eqref{add} that 
\begin{displaymath}
\begin{split}
|\mu_{j, D}(x)|  &= \bigg|\sum_{  p \leq \sqrt{x}}  \frac{( \lambda_j(p^2)- \psi_j(p)) \frac{1}{2}(1 - \eta_E(p))}{ p}  + O(1)\bigg|  \leq \sum_{  p \leq \sqrt{x}} \frac{\lambda_j(p)^2}{p} +O(1)\\
& \ll \log\log x + (\log Q_{\pi_j})^{1/3}
\end{split}
\end{displaymath}
which establishes \eqref{boundmu}, and by the same argument we conclude \eqref{also}. 
\end{proof}

\section{Orthogonality}\label{sec7}

As outlined in Section \ref{sec5}, our methods are ultimately based on computing high moments of $\log L(1/2, \pi_1 \times \chi) L(1/2, \pi_2 \times \chi)$, or more generally $\log L(1/2, \pi_1 \times \chi^{\alpha}) L(1/2, \pi_2 \times \chi^{\beta})$, and by the results of the previous section these values can be approximated by short sums over primes. Starting from basic orthogonality relations, the lemmas in this section   estimate averages of increasing complexity over the Arakelov class  group $\widetilde{\rm Cl}_E^\vee$ of these short sums over primes. The principal results here are Lemmas \ref{lemma43} and \ref{lemma12}, which serve for Theorems \ref{thm1} and \ref{thm2}, respectively. We emphasize that this section does not invoke GRH, nor in fact do the cusp forms $\pi_j$ play any role.

%\subsection{Two preliminary lemmas}

For the rest of this paper we fix an even, non-negative Schwartz class function $F$  on $\R$ whose Fourier transform has support in $[-\frac{1}{2\pi} , \frac{1}{2\pi}]$. This will be of use in smoothly truncating the non-compact part of $\widetilde{\rm Cl}_E^\vee$ for real quadratic fields $E$. We recall the notation $\lambda_{\chi}$ as defined in Section \ref{sec:fractional},  {which in the case of real quadratic fields satisfies $\chi_{\infty}(x_1, x_2) = |x_1|^{i\lambda_{\chi}} |x_2|^{-i\lambda_{\chi}}.$
 
\begin{lemma}\label{orth} There exists a constant $c > 1$ with the following property. If  $\{0\} \not= \mathfrak{a} \subseteq \mathcal{O}_E$ is an ideal with $N\mathfrak{a} < {D}/4$ and  ${D} > c$, then 
$$\sum_{\chi \in \widetilde{\rm Cl}_E^\vee}F(\lambda_{\chi}) \chi(\mathfrak{a}) = 0$$
unless $\mathfrak{a} = (a)$, with $a \in \Bbb{N}$, is an ideal induced from $\Bbb{Q}$. 
\end{lemma}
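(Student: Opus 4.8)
The plan is to treat the cases $E$ imaginary and $E$ real separately. In both, the first step is to use orthogonality of characters of ${\rm Cl}_E$ to kill the sum unless $\mathfrak a$ is principal; the second step is to use the norm bound $N\mathfrak a < D/4$ (supplemented, in the real case, by the Fourier support of $F$) to force a principal ideal of norm $<D/4$ to be generated by a rational integer. In the imaginary case $\widetilde{\rm Cl}_E = {\rm Cl}_E$ is finite and $\lambda_\chi \equiv 0$, so $F(\lambda_\chi) = F(0)$ is constant and $\sum_{\chi}F(\lambda_\chi)\chi(\mathfrak a) = F(0)\sum_{\chi \in {\rm Cl}_E^\vee}\chi(\mathfrak a)$, which vanishes by orthogonality unless $\mathfrak a = (\gamma)$ is principal. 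Writing $\mathcal{O}_E = \Z \oplus \Z\omega$ and $\gamma = u + v\omega$, a direct evaluation of the norm form (using $|{\rm disc}\, E| = D$) gives $N\mathfrak a = |\Nm_{E/\Q}(\gamma)| \geq \tfrac14 D v^2$; as $N\mathfrak a < D/4$ this forces $v = 0$, so $\mathfrak a = (|u|)$ is induced from $\Q$. (No largeness of $D$ is needed here.)

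For $E$ real, $\widetilde{\rm Cl}_E^\vee$ is an extension of $\Z$ by ${\rm Cl}_E^\vee$, the map to $\Z$ recording the infinity type. Fixing a splitting, write $\chi = \psi\,\nu^k$ with $\psi \in {\rm Cl}_E^\vee$, $k \in \Z$, and $\lambda_\chi = k\delta$ where $\delta = \pi/\log\epsilon$; then
\[
\sum_{\chi}F(\lambda_\chi)\chi(\mathfrak a) = \Big(\sum_{\psi \in {\rm Cl}_E^\vee}\psi(\mathfrak a)\Big)\Big(\sum_{k \in \Z}F(k\delta)\,\nu(\mathfrak a)^k\Big),
\]
and the first factor vanishes by orthogonality unless $\mathfrak a = (\gamma)$ is principal, which we henceforth assume.

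For a principal ideal one has $\nu((\gamma)) = e^{-i\delta t_\gamma}$ with $t_\gamma = \log|\gamma^{(1)}/\gamma^{(2)}|$, well defined modulo the period $2\pi/\delta = 2\log\epsilon$ of $\log|{\cdot}^{(1)}/{\cdot}^{(2)}|$ on $\mathcal{O}_E^\times$. Poisson summation gives
\[
\sum_{k \in \Z}F(k\delta)\,e^{-ik\delta t_\gamma} = \frac1\delta\sum_{m \in \Z}\widehat{F}\Big(\frac{t_\gamma}{2\pi} + \frac m\delta\Big).
\]
Since $\widehat{F}$ is supported in $[-\tfrac1{2\pi},\tfrac1{2\pi}]$, and since the classical bound $\epsilon \gg \sqrt{D}$ gives $\log\epsilon \gg \log D > 1$ once $D > c$, only the term $m = 0$ can survive, and it is nonzero only if the balanced representative $\gamma$ of $\mathfrak a$ satisfies $|t_\gamma| \leq 1$. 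It then remains to show that $|\Nm_{E/\Q}(\gamma)| < D/4$ together with $|t_\gamma| \leq 1$ forces $\gamma$ to be a unit times a rational integer: from $|t_\gamma| \leq 1$ we get $|\gamma^{(1)}/\gamma^{(2)}| \in [e^{-1},e]$, hence $|\gamma^{(i)}|^2 \leq e\,|\Nm(\gamma)| < eD/4$, so $|\gamma^{(1)} - \gamma^{(2)}| < \sqrt{eD}$; writing $\gamma = u + v\omega$ one has $|\gamma^{(1)} - \gamma^{(2)}| = |v|\sqrt{D}$, whence $v \in \{-1,0,1\}$, and ruling out the boundary case $|v| = 1$ (using the strict inequality $N\mathfrak a < D/4$ and the precise shape of the norm form, enlarging $c$ if necessary) leaves $v = 0$, so $\mathfrak a = (|u|)$ is induced from $\Q$.

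The main obstacle is the last step of the real case. The elementary short-vector estimate only yields $|v| \leq 1$, so excluding the case $|v| = 1$ genuinely requires exploiting the interplay between the norm bound and the non-vanishing (Fourier-support) condition; this is where the hypothesis $D > c$ and the sharpness of the interval $[-\tfrac1{2\pi},\tfrac1{2\pi}]$ enter in an essential way. Everything else is orthogonality of class-group characters and a single application of Poisson summation on the regulator circle.
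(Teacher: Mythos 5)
Your argument follows the paper's proof closely. In the imaginary case both reduce to the principal case $\mathfrak{a}=(\gamma)$ by orthogonality over ${\rm Cl}_E$ and then read off $v=0$ directly from the positive-definite norm form; you and the paper are essentially identical here. In the real case both pass to a principal generator, apply Poisson summation over the regulator circle, and use the Fourier support of $F$ to isolate the $m=0$ term, which can be nonzero only if $|\log|\gamma/\gamma'||\leq 1$; you then finish with a short-vector estimate giving $|v|\leq 1$, while the paper instead computes $\alpha/\alpha' = 1 + b\sqrt{D}\,\alpha/N\alpha \geq 1+4b^2$ and concludes from $\log 5>1$.

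You correctly flag the boundary case $|v|=1$ as the genuine difficulty, and you should note that the paper's own proof does not dispose of it either. The chain $\alpha/\alpha' \geq 1 + b^2 D/N\alpha \geq 1+4b^2$ is valid only when the \emph{field} norm $N\alpha$ is positive (equivalently $\alpha\geq b\sqrt{D}$), which the stated normalizations $b\geq 0$, $a+\kappa b/2\geq 0$, $\epsilon^{-1}\leq|\alpha/\alpha'|<\epsilon$ do not ensure. Concretely, for $D\equiv 1\pmod 4$ take $\alpha=\omega=(1+\sqrt{D})/2$: it satisfies all the normalizations, has $N\mathfrak{a}=(D-1)/4<D/4$ and is not induced from $\Bbb{Q}$ unless $(D-1)/4$ is a square with further constraints, yet $N\alpha=(1-D)/4<0$ and $|\alpha/\alpha'|=(\sqrt{D}+1)/(\sqrt{D}-1)\to 1$, so the $m=0$ Poisson term sits inside the support of $\widehat{F}$. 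Hence the case $|v|=1$ with $N\alpha<0$ is not covered by either write-up as it stands, and your honest acknowledgment of the difficulty is warranted. To repair it one must either tighten the hypothesis $N\mathfrak{a}<D/4$ to $N\mathfrak{a}<c'D$ with $c'$ strictly below $1/4$ (so that $N\alpha<0$ forces $u=a+\kappa b/2$ to be a definite positive proportion of $b\sqrt{D}$, making $|\log|\alpha/\alpha'||$ large), or shrink the Fourier support of $F$; with such an adjustment both your short-vector estimate and the paper's explicit inequality close cleanly.
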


\begin{proof} We put  $\sigma  = \eta_E(-1)$, so that the discriminant of $E$ is $\sigma D$. Recall that $\mathcal{O}_K = \Bbb{Z} + \frac{1}{2}(\sqrt{\sigma D} + \kappa) \Bbb{Z}$ where $\kappa = 0$ if $D \equiv 0$ (mod 4) and $\kappa = 1$ if $\sigma D \equiv 1$ (mod 4). The lemma is  easy to see if $E$ is imaginary. The sum can only be non-zero if $\mathfrak{a} = (\alpha)$ is a principal ideal. Now   $$\frac{{D}}{4} > N\alpha = N\Big(a + b \frac{\sqrt{\sigma D} + \kappa}{2}\Big) = a^2 + \kappa ab +   b^2\frac{{D}+\kappa}{4} \geq b^2 \frac{D}{4}$$ implies $b = 0$.

Let us now assume that $E$ is real. Again the sum can only be non-zero if $\mathfrak{a} = (\alpha)$ is principal.  Thus  our sum becomes
\begin{equation}\label{sum1}
\sum_{n \in \Bbb{Z}} F\Big( \frac{\pi n}{\log \epsilon}\Big) \Big| \frac{\alpha}{\alpha'}\Big|^{i n\pi/\log \epsilon}.
\end{equation}
Changing $\alpha = a + b (\sqrt{D} + \kappa)/2$ by a sign and replacing $\alpha$ with $\alpha'$ if necessary (without loss of generality since $F$ is even), we may assume that $b \geq 0$ and $a + \kappa b/2 \geq 0$. Now changing $\alpha$ by a power of $\epsilon$, we can also assume 
 $\epsilon^{-1} \leq |\alpha/\alpha'| < \epsilon$. 
By Poisson summation the previous display equals
$$ \frac{\log \epsilon}{\pi} \sum_{m \in \Bbb{Z}} \widehat{F} \Big( \frac{m}{\pi}\log \epsilon  - \frac{1}{2\pi} \log\Big| \frac{\alpha}{\alpha'} \Big|\Big) .$$
Since $\log \epsilon \geq \log D + O(1)$ and $D > c$, the support of $\widehat{F}$ implies that the sum consists only of one term and $\epsilon^{-1} \leq |\alpha/\alpha'| < \epsilon$ implies that this  term corresponds to $m=0$. 
We have 
$$\frac{\alpha}{\alpha'} = \frac{a + b(\sqrt{D}+\kappa)/2}{a - b(\sqrt{D}-\kappa)/2} = 1 + \frac{b \sqrt{D} (a + b( \sqrt{D}+\kappa)/2)}{N\alpha} \geq 1 + \frac{b^2D}{N\alpha} \geq 1 + 4b^2,$$
so the term $m=0$ is outside the support of $\widehat{F}$ for $b \not= 0$ (since $\log 5 > 1$). Hence \eqref{sum1} vanishes, unless $b=0$, in which case $\mathfrak{a} = (\alpha)$ is induced from $\Bbb{Q}$. \end{proof}

\subsection{High moments of short Dirichlet polynomials}

Let $\tilde{a}_{\chi}$ be the completely multiplicative function with $\tilde{a}_{\chi}(p) = a_{\chi}(p)$ where as in the beginning of Section \ref{sec41} we write $a_{\chi}(n)$ for the Dirichlet coefficients of the theta function induced by $\chi$.  Let $R$ be the multiplicative function with
$$R(p^{\alpha}) = \begin{cases} \binom{\alpha}{\alpha/2}, &  \alpha \text{ even}, p\text{ split},\\
1, & \alpha \text{ even}, p \text{ ramified},\\
0, & \text{otherwise.}\end{cases}$$
In the following we assume that ${D}$ is sufficiently large in order to apply the previous lemma.

\begin{lemma}\label{lem:chi-sum}
Let $\nu \in \Bbb{N}$. Assume that $n<({D}/4)^{1/\nu}$ and if $\nu$ is even suppose that $n$ is only composed of split primes. Then we have
\[
 \sum_{\chi \in \widetilde{\rm Cl}_E^\vee} F(\lambda_{\chi}) \tilde{a}_{\chi^\nu}(n) = R(n)  \sum_{\chi \in \widetilde{\rm Cl}_E^\vee} F(\lambda_{\chi}).
\]
\end{lemma}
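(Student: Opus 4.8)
The plan is to reduce the character sum on the left to the orthogonality lemma (Lemma \ref{orth}) by unfolding the definition of $\tilde a_{\chi^\nu}$. First I would write $\tilde a_{\chi^\nu}$ as a completely multiplicative function, so that $\tilde a_{\chi^\nu}(n) = \prod_{p^{e}\| n} \tilde a_{\chi^\nu}(p)^{e}$, and then express $\tilde a_{\chi^\nu}(p)$ in terms of the Satake data \eqref{xi}: since $a_\chi(p) = \xi_\chi(p,1) + \xi_\chi(p,2)$ is the $p$-th Dirichlet coefficient of the theta series, and $\chi^\nu$ has Satake parameters $\xi_\chi(p,k)^\nu$, we have $\tilde a_{\chi^\nu}(p) = \xi_\chi(p,1)^\nu + \xi_\chi(p,2)^\nu$. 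Expanding the product over primes dividing $n$ and the powers of the binomials, each resulting monomial is of the form $\xi_\chi(p,1)^{a}\xi_\chi(p,2)^{b}$ with $a+b = \nu e$, and using the multiplicativity this becomes a single value $\chi^{\nu}(\mathfrak b)$ for a suitable ideal $\mathfrak b$ of $\mathcal O_E$ with $N\mathfrak b \le n^\nu < D/4$ (here I use the hypothesis on the size of $n$, and that for split $p$ the two Satake parameters correspond to the two primes above $p$, while for ramified $p$ they are $\{\chi(\mathfrak p),0\}$).

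The next step is to apply Lemma \ref{orth} termwise: after expansion, $\sum_{\chi} F(\lambda_\chi)\,\chi^\nu(\mathfrak b)$ vanishes unless $\mathfrak b$ is induced from $\Q$, i.e.\ $\mathfrak b = (m)$ for $m\in\N$, in which case $\chi^\nu(\mathfrak b) = 1$ for every $\chi \in \widetilde{\rm Cl}_E^\vee$ and the term contributes $\sum_\chi F(\lambda_\chi)$. (One must be slightly careful that $\chi^\nu$, rather than $\chi$, appears, but Lemma \ref{orth} applies verbatim to any character of $\widetilde{\rm Cl}_E$, in particular to $\chi^\nu$; alternatively one notes that the set $\{\chi^\nu : \chi\}$ is a subgroup and reruns the orthogonality argument, but this is unnecessary since the vanishing statement is about a fixed ideal $\mathfrak b$ and \emph{all} characters.) Thus the whole sum collapses to $\big(\sum_\chi F(\lambda_\chi)\big)$ times the number of expansion monomials whose associated ideal $\mathfrak b$ is induced from $\Q$, counted with the binomial multiplicities.

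It then remains to identify this combinatorial count as $R(n)$. Working prime by prime (everything is multiplicative), at a split prime $p$ with $p^e \| n$ one has $(\xi_\chi(p,1)^\nu + \xi_\chi(p,2)^\nu)^e = \sum_{k=0}^{e}\binom{e}{k}\xi_\chi(p,1)^{\nu k}\xi_\chi(p,2)^{\nu(e-k)}$, and since $\xi_\chi(p,1) = \chi(\mathfrak p)$, $\xi_\chi(p,2) = \chi(\bar{\mathfrak p})$ with $\mathfrak p\bar{\mathfrak p} = (p)$, the monomial is $\chi^{\nu}(\mathfrak p^{k}\bar{\mathfrak p}^{e-k})$, which is induced from $\Q$ exactly when $k = e-k$, i.e.\ $e$ even and $k = e/2$, contributing $\binom{e}{e/2}$ — matching $R(p^e)$. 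At a ramified prime $p$ we have $\xi_\chi(p,2) = 0$, so the only surviving term is $\xi_\chi(p,1)^{\nu e} = \chi^{\nu}(\mathfrak p^{e})= \chi^\nu((p)^{e/2})$ when $e$ is even (using $\mathfrak p^2 = (p)$), giving $1$, and nothing when $e$ is odd — again matching $R(p^e)$. Inert primes are excluded from $n$ when $\nu$ is even by hypothesis, and when $\nu$ is odd an inert prime $p$ gives $\xi_\chi(p,1)^\nu + \xi_\chi(p,2)^\nu = (-1)^\nu + 1^\nu = 0$, so such $n$ contribute nothing on both sides ($R(n) = 0$ there too); in fact for $\nu$ odd the only nonzero contributions come from $n$ with $R(n)\neq 0$. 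Assembling the local factors gives the multiplicative function $R(n)$, completing the proof.

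The main obstacle I anticipate is purely bookkeeping: tracking the correspondence between expansion monomials $\xi_\chi(p,1)^a\xi_\chi(p,2)^b$ and ideals of $\mathcal O_E$ consistently across split, ramified, and inert primes, and in particular making sure the norm bound $N\mathfrak b < D/4$ needed to invoke Lemma \ref{orth} genuinely follows from $n < (D/4)^{1/\nu}$ — the worst case being $\mathfrak b = \mathfrak p^{\nu e}$ of norm $p^{\nu e} \le n^\nu$. There is no deep difficulty here; the lemma is really a clean consequence of orthogonality once the Satake bookkeeping is set up, which is presumably why it is placed in the combinatorial Section \ref{sec7} rather than among the analytic estimates.
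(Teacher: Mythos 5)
Your proof is correct and takes essentially the same route as the paper's: unfold $\tilde a_{\chi^\nu}(n)$ multiplicatively into monomials $\chi^\nu(\mathfrak b)=\chi(\mathfrak b^\nu)$ with $N(\mathfrak b^\nu)\le n^\nu<D/4$, apply Lemma~\ref{orth} termwise, and match the surviving monomials with $R(n)$ prime by prime. One small point to tighten: the vanishing criterion that Lemma~\ref{orth} actually gives is ``$\mathfrak b^\nu$ induced from $\Q$'', not ``$\mathfrak b$ induced from $\Q$'' (and the claim that the lemma applies ``verbatim'' to $\chi^\nu$ is not quite right, since the weight $F(\lambda_\chi)$ depends on $\chi$, not on $\chi^\nu$) --- the two criteria coincide at split primes and, for $\nu$ odd, at ramified primes, and it is precisely because they would disagree at ramified primes when $\nu$ is even that the hypothesis excludes ramified, not merely inert, prime factors of $n$ in that case.
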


\begin{proof}
If $n$ is divisible by an inert prime, then $\tilde{a}_{\chi}(n) = 0$ for any $\chi\in\widetilde{\rm Cl}_E^\vee$ so the sum over $\chi$ clearly vanishes. We may therefore assume that $n$ is supported on non-inert primes and write $n = \prod p_j^{\alpha_j} \prod q_k^{\beta_k}$ with pairwise distinct split primes $p_j = \mathfrak{p}_j \bar{\mathfrak{p}}_j$ and pairwise distinct  ramified primes $q_k = \mathfrak{q}_k^2$. We obtain
\begin{displaymath}
\begin{split}
\sum_{\chi \in \widetilde{\rm Cl}_E^\vee} F(\lambda_{\chi}) \tilde{a}_{\chi^\nu}(n)  & =\sum_{\chi \in \widetilde{\rm Cl}_E^\vee} F(\lambda_{\chi}) \prod (\chi^\nu(\mathfrak{p}_j) + \bar{\chi}^\nu(\mathfrak{p}_j))^{\alpha_j}\prod \chi^\nu(\mathfrak{q}_k)^{\beta_k}  \\&=  \sum_{\gamma_1 \leq \alpha_1, \, \gamma_2 \leq \alpha_2, \ldots } \binom{\alpha_1}{\gamma_1} \binom{\alpha_2}{\gamma_2} \cdots \sum_{\chi \in \widetilde{\rm Cl}_E^\vee} F(\lambda_{\chi})\chi^\nu(\mathfrak{p}_1^{\gamma_1}\bar{\mathfrak{p}}_1^{\alpha_1 - \gamma_1} \cdots \mathfrak{q}_1^{\beta_1} \cdots).%\label{big-sum-chi}
\end{split}
\end{displaymath}
By  Lemma \ref{orth}, the inner sum vanishes unless   $\mathfrak{p}_1^{\gamma_1}\bar{\mathfrak{p}}_1^{\alpha_1 - \gamma_1} \cdots \mathfrak{q}_1^{\beta_1} \cdots$ is a rational integer.  This is the case precisely when $2\gamma_j =\alpha_j$ for all $j$, and the $\beta_k$ are even for all $k$ (here we use the assumption $\beta_k = 0$ if $\nu$ is even). These conditions give exactly the definition of $R(n)$.
\end{proof}

The following is inspired by  \cite[Lemma 4.3]{LR}.  It is a central ingredient in the proof of Theorem \ref{thm1}.

\begin{lemma}\label{lemma43} Let $\nu \in \Bbb{N}$. Let  $x \geq 2$, $k \in \Bbb{N}$ with $x^{2k} < ({D}/4)^{1/\nu}$. %For a prime $p$ let $r(p)=1+\eta_E(p)$. 
For any sequence of complex numbers $b(p)$ indexed by primes (split primes if $\nu$ is even) we have
$$\sum_{\chi \in \widetilde{\rm Cl}_E^\vee} F(\lambda_{\chi}) \Big( \sum_{p \leq x} \frac{a_{\chi^\nu}(p)b(p) }{\sqrt{p}} \Big)^{2k} \leq    \frac{(2k)!}{k!} \Big(\frac12 \sum_{p \leq x} \frac{(1 + \eta_E(p))b(p)^2 }{p} \Big)^{k} \sum_{\chi \in \widetilde{\rm Cl}_E^\vee} F(\lambda_{\chi}).$$
 \end{lemma}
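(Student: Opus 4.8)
The plan is to expand the $2k$-th power, apply Lemma \ref{lem:chi-sum} term by term, and then recognize the resulting main term as the $2k$-th moment of a Gaussian. First I would write
\[
\Big(\sum_{p\le x}\frac{a_{\chi^\nu}(p)b(p)}{\sqrt p}\Big)^{2k}
=\sum_{p_1,\dots,p_{2k}\le x}\frac{b(p_1)\cdots b(p_{2k})}{\sqrt{p_1\cdots p_{2k}}}\,\tilde a_{\chi^\nu}(p_1\cdots p_{2k}),
\]
using that $\tilde a_{\chi^\nu}$ is completely multiplicative so that $a_{\chi^\nu}(p_1)\cdots a_{\chi^\nu}(p_{2k})=\tilde a_{\chi^\nu}(n)$ with $n=p_1\cdots p_{2k}$. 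Since $n<x^{2k}<(D/4)^{1/\nu}$, and since (when $\nu$ is even) every $p_i$ is split so $n$ is a product of split primes, the hypotheses of Lemma \ref{lem:chi-sum} are met. Summing over $\chi$ against $F(\lambda_\chi)$ and interchanging sums gives
\[
\sum_{\chi\in\widetilde{\rm Cl}_E^\vee}F(\lambda_\chi)\Big(\sum_{p\le x}\frac{a_{\chi^\nu}(p)b(p)}{\sqrt p}\Big)^{2k}
=\Big(\sum_{\chi\in\widetilde{\rm Cl}_E^\vee}F(\lambda_\chi)\Big)\sum_{p_1,\dots,p_{2k}\le x}\frac{b(p_1)\cdots b(p_{2k})}{\sqrt{p_1\cdots p_{2k}}}\,R(p_1\cdots p_{2k}).
\]
Note $\sum_\chi F(\lambda_\chi)\ge 0$ because $F$ is non-negative, so this factors out cleanly and the inequality to be proved is now purely combinatorial.

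Next I would analyze the combinatorial sum $\sum R(p_1\cdots p_{2k})b(p_1)\cdots b(p_{2k})/\sqrt{p_1\cdots p_{2k}}$. Write $n=p_1\cdots p_{2k}$ grouped by its distinct prime factors; $R(n)$ is nonzero only when every prime occurs to an even multiplicity, and is itself a product $\prod_j\binom{\alpha_j}{\alpha_j/2}$ over distinct split primes (with factor $1$ at ramified primes). The key identity is that $\binom{\alpha}{\alpha/2}$ is exactly the number of ways to pair up $\alpha$ labelled slots into $\alpha/2$ ordered pairs once one fixes which $\alpha/2$ slots are "left halves" — more precisely, the number of ordered tuples $(p_1,\dots,p_{2k})$ giving a fixed multiset $n=\prod p^{\alpha_p}$ times $R(n)$ counts perfect matchings of $\{1,\dots,2k\}$ into $k$ pairs such that the two elements of each pair carry the same prime. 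Summing over all such matchings, each pair $\{i,i'\}$ contributes $\sum_{p\le x, \,p\text{ non-inert}} b(p)^2/p$, and since $R$ kills inert primes this is $\sum_{p\le x}\tfrac12(1+\eta_E(p))b(p)^2/p$. The number of perfect matchings of $2k$ elements into $k$ unordered pairs is $(2k)!/(2^k k!)$, but because the tuples are ordered one gets an extra $2^k$ (choice of which element of each pair is "first"), landing on $(2k)!/k!$. Hence
\[
\sum_{p_1,\dots,p_{2k}\le x}\frac{b(p_1)\cdots b(p_{2k})}{\sqrt{p_1\cdots p_{2k}}}\,R(p_1\cdots p_{2k})
\le \frac{(2k)!}{k!}\Big(\frac12\sum_{p\le x}\frac{(1+\eta_E(p))b(p)^2}{p}\Big)^k,
\]
which is the claimed bound. (The inequality rather than equality is there because when two distinct primes in the matching happen to be equal one overcounts relative to the clean $\binom{\alpha}{\alpha/2}$ bookkeeping; more directly, one simply bounds $R(n)\le$ the matching count, which is an overcount precisely at such coincidences, and since all $b(p)^2/p$ terms entering are... — here one must be slightly careful about signs.)

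The main obstacle I anticipate is exactly this last point: the sum $\tfrac12\sum_{p}(1+\eta_E(p))b(p)^2/p$ need not be nonnegative if the $b(p)$ are genuinely complex, so "overcounting" is not automatically harmless, and one cannot blithely replace $R(n)$ by a larger matching count. The resolution is that the identity from Lemma \ref{lem:chi-sum} is \emph{exact}: the left side equals $(\sum_\chi F)\cdot\sum R(n)b(p_1)\cdots b(p_{2k})/\sqrt n$ on the nose, and the combinatorial sum on the right, when fully expanded, is itself \emph{exactly} $\frac{(2k)!}{k!}$ times a product of complete contractions plus lower-order "diagonal" terms where distinct matched primes collide; one checks these diagonal corrections have the right sign (or are absorbed) by the standard Wick/moment argument — the cleanest route is to observe that $\big(\sum_p b(p)^2(1+\eta_E(p))/p\big)^k$ expanded out already contains, with multiplicity $(2k)!/(2^k k!)\cdot 2^k=(2k)!/k!$, every term $\prod_j b(p)^{\alpha_j}$ with $\sum\alpha_j=2k$ and all $\alpha_j$ even, each with combinatorial weight at least $\prod_j\binom{\alpha_j}{\alpha_j/2}=R(n)$, so that the difference is a sum of products of the form $(\text{something})\cdot b(p)^2/p$ with manifestly matching structure. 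I would model the write-up on \cite[Lemma 4.3]{LR}, where this bookkeeping is carried out, and simply track the two modifications here: the exponent $\nu$ inside $\chi^\nu$ (which only affects the applicability of Lemma \ref{lem:chi-sum}, already handled), and the factor $\tfrac12(1+\eta_E(p))$ replacing a plain restriction to split primes.
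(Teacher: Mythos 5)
Your first two steps coincide with the paper's proof: expand the $2k$-th power by complete multiplicativity of $\tilde a_{\chi^\nu}$, and apply Lemma~\ref{lem:chi-sum} to each term (the size hypothesis $n<(D/4)^{1/\nu}$ and, when $\nu$ is even, the split-support hypothesis are both satisfied, as you note). This reduces matters to the purely combinatorial inequality
\[
(2k)!\sum_{\substack{n\ge 1\\ p\mid n\Rightarrow p\le x}}\frac{b(n)^2\,p_k(n)}{n}\,\frac{R(n^2)}{\nu(n^2)}\;\le\;\frac{(2k)!}{k!}\Big(\tfrac12\sum_{p\le x}\frac{(1+\eta_E(p))b(p)^2}{p}\Big)^k,
\]
where $\nu(p^{\alpha})=\alpha!$ and $p_k$ indicates $\Omega(n)=k$. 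This is where your proposal has a genuine gap. The Wick/matching interpretation you sketch is not correct as stated: the number of perfect matchings of $2\gamma$ identical slots is $(2\gamma-1)!!=(2\gamma)!/(2^\gamma\gamma!)$, which is \emph{not} always $\ge\binom{2\gamma}{\gamma}=R(p^{2\gamma})$ (it is smaller for $\gamma\le 3$), so the claim ``combinatorial weight at least $R(n)$'' is not true in the form you state it and the alleged ``overcounting'' does not automatically go your way. Moreover the phrase ``multiplicity $(2k)!/(2^kk!)\cdot 2^k$'' conflates the global prefactor with a multiplicity inside the $k$-fold multinomial expansion.

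What actually closes the argument is a direct per-prime comparison, which is what the paper does. Expanding the $k$-th power on the right over $n=\prod p^{\gamma_p}$ with $\sum\gamma_p=k$ and cancelling $(2k)!$ and $b(n)^2/n$, the needed inequality is, prime by prime,
\[
\frac{R(p^{2\gamma})}{(2\gamma)!}\;\le\;\frac{(1+\eta_E(p))^{\gamma}}{2^{\gamma}\,\gamma!}\qquad(\gamma\ge 1),
\]
which one checks in each of the three cases: split ($\binom{2\gamma}{\gamma}/(2\gamma)!=1/(\gamma!)^2\le 1/\gamma!$), ramified ($1/(2\gamma)!\le 1/(2^{\gamma}\gamma!)$), and inert ($0\le 0$). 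This is elementary but it is the entire content of the lemma; your proposal gestures at it but never verifies it, and the matching heuristic actively points in the wrong direction for small $\gamma$. Finally, you are right to flag the sign issue: the term-by-term comparison requires $b(n)^2\ge 0$, i.e.\ real $b(p)$, even though the statement says ``complex''; this is a harmless slip in the lemma (in every application $b(p)$ is real), but your proposed ``resolution'' via exactness of Lemma~\ref{lem:chi-sum} does not actually remove this requirement.
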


\begin{proof} We extend $b(p)$ to all integers as a completely multiplicative function. Let $p_j(n)$ be the characteristic function on numbers with $j$ prime factors (counted with multiplicity) and $\nu$  the multiplicative function with $\nu(p^{\alpha}) = \alpha!$. With these notational conventions we have  
 \begin{displaymath}
\begin{split}
&\sum_{\chi \in \widetilde{\rm Cl}_E^\vee} F(\lambda_{\chi}) \Big( \sum_{p \leq x} \frac{a_{\chi^\nu}(p)b(p) }{\sqrt{p}} \Big)^{2k} = \sum_{\substack{n\geq 1\\ p \mid n \Rightarrow p \leq x}} \frac{ (2k)!}{\nu(n)}\frac{b(n)  p_{2k}(n)  }{\sqrt{n}}\sum_{\chi \in \widetilde{\rm Cl}_E^\vee} F(\lambda_{\chi}) \tilde{a}_{\chi^\nu}(n). 
  \end{split}
\end{displaymath}
 We conclude from Lemma \ref{lem:chi-sum} that the inner sum vanishes unless $n$ is a square, so that the previous display equals
 $$  (2k)!\sum_{\substack{n\geq 1\\ p \mid n \Rightarrow p \leq x}} \frac{b(n)^2   p_k(n)  }{ n} \frac{R(n^2)}{\nu(n^2)}\sum_{\chi \in \widetilde{\rm Cl}_E^\vee} F(\lambda_{\chi}).$$
It follows from the definitions that
$$\frac{R(p^{2\alpha})}{\nu(p^{2\alpha})} = \left\{\begin{array}{ll}(\alpha!)^{-2}\leq (\alpha!)^{-1}=  \nu(p^{\alpha})^{-1}, & p\text{ split};\\  (2\alpha)!^{-1}\leq (2^{\alpha} \alpha!)^{-1}=(2^{\alpha} \nu(p^\alpha))^{-1}, & p \text{ ramified}\end{array}\right\} \leq \frac{1 + \eta_E(p)}{2\nu(p^\alpha)}. $$
%More succinctly, for any prime $p$ we have
%\[
%\frac{R(p^{2\alpha})}{\nu(p^{2\alpha})} \leq  \frac{1 + \eta_E(p)}{2\nu(p^\alpha)}.
%\]
Denoting by $r(n)$ the completely multiplicative function extending $r(p) = 1 + \eta_E(p)$, we obtain
\[
\sum_{\chi \in \widetilde{\rm Cl}_E^\vee} F(\lambda_{\chi}) \Big( \sum_{p \leq x} \frac{a_{\chi^\nu}(p)b(p) }{\sqrt{p}} \Big)^{2k} \leq   \frac{(2k)!}{k!} \bigg(\frac{1}{2^k}\!\!\!\sum_{\substack{n\geq 1\\ p \mid n \Rightarrow p \leq x}}\!\! \frac{k!}{\nu(n)}\frac{r(n)b(n)^2 p_k(n) }{n}\bigg) \sum_{\chi \in \widetilde{\rm Cl}_E^\vee} F(\lambda_{\chi}).
\]
The claim follows.
\end{proof}

\subsection{High moments of short mixed Dirichlet polynomials} For the proof of Theorem \ref{thm2} we need slightly more advanced combinatorics. We fix two \emph{distinct} positive integers $\alpha, \beta \in \Bbb{N}$. 
For $n \in \Bbb{N}$, $0 \leq m \leq n$ we define
$$B_{\alpha, \beta}(n, m) = \underset{2\alpha r + (\beta - \alpha) m - \beta n + 2\beta s = 0}{\sum_{r=0}^m \sum_{s = 0}^{n-m}} \binom{n}{m}\binom{m}{r}\binom{n-m}{s}.$$
One checks directly that
\begin{equation}\label{50}
 B_{\alpha, \beta}(1, 0)= B_{\alpha, \beta}(1, 1) = 0.
\end{equation}
 %, \quad (B_{\alpha, \beta}(2, 0), B_{\alpha, \beta}(2, 1), B_{\alpha, \beta}(2, 2)) = (2, 0, 2).$$
%and  
%$$B_{\alpha, \alpha}(n, m) = \delta_{2 \mid n} \binom{n}{m}\binom{n}{n/2}. $$
Let $v_2$ the denote the usual 2-adic valuation. Then
\begin{equation}\label{50a}
B_{\alpha, \beta}(n, m) = 0 \quad  \text{unless}  \quad \begin{cases} 2\mid n, & v_2(\alpha) = v_2(\beta), \\ 2\mid m, & v_2(\alpha) < v_2(\beta),\\
2 \mid n-m, & v_2(\alpha) > v_2(\beta). \end{cases}
\end{equation}
%Without loss of generality we assume that $\alpha$ and $\beta$ are coprime and that $\alpha$ is odd. If $ \beta$ is odd, too, then 
%If $v_2(\alpha) = v_2(\beta)$, then 
%$B_{\alpha, \beta}(n, m)$ is supported on even $n$. Otherwise $B_{\alpha, \beta}(n, m)$ is supported on even $m$.   
We record the trivial bound
\begin{equation}\label{triv}
B_{\alpha, \beta}(n, m) \leq \binom{n}{m}2^{n}.
\end{equation}

%Obviously this only supported on even $m$. We also have $B(n, 0) = 0$ if $n$ is odd, because $m=0$ implies $r=0$ and $2n=4s$ is impossible for odd $n$. We need the following inequalities:
%\begin{equation}\label{1}
%\begin{split}
  % \frac{n!}{(2n)!} B(2n, 2m) \leq \binom{n}{m}, \quad  \frac{n!}{(2n+3)!} B(2n+3, 2m+2) \leq \binom{n}{m}.
%\end{split}
%\end{equation}
%The first inequality is equivalent to 
%$$\underset{2n-m-r-2s = 0}{\sum_{r=0}^{2m} \sum_{s = 0}^{2n-2m}} \frac{1}{r! s! (2m-r)! (2n-2m-s)!} \leq \frac{1}{m!(n-m)!}$$
%I don't know how to prove them, but there is great numerical evidence for small $n$. \\
%They are easy to see: we have trivially 
%$B(n, m) \leq \binom{n}{m} 2^n$, so the first inequality is equivalent to
%$$ \frac{n!}{(2n)!}\frac{1}{\binom{n}{m}} B(2n, 2m) \leq \frac{n!}{(2n)!} \frac{m!(n-m)!}{n!} \frac{(2n)!}{(2n-2m)! (2m)!} 2^{2n} = \frac{m!(n-m)! 2^{2n}}{(2n-2m)! (2m)!} \leq 1$$
%and the second one is proved in the same way
%$$ \frac{n!}{(2n+3)!}\frac{1}{\binom{n}{m}} B(2n+3, 2m+2) \leq \frac{m!(n-m)!2^{2n+3}}{(2m+2)! (2n-2m+1)!} \leq 1.$$

The following lemma should be compared to Lemma \ref{lem:chi-sum}.

\begin{lemma}\label{lem10}
Let $b(p)$, $c(p)$ be any sequences indexed by split primes.  Let $f_\chi$ be the completely multiplicative function whose values at primes are given by $f_{\chi}(p) = a_{\chi^{\alpha}}(p) b(p) + a_{\chi^{\beta}}(p)c(p)$. For $n < ({D}/4)^{1/\max(\alpha, \beta)}$ we have
 \begin{equation}\label{sum}
 \sum_{\chi \in \widetilde{\rm Cl}_E^\vee} F(\lambda_{\chi})  f_{\chi}(n) = H_{\alpha, \beta}(n)\sum_{\chi \in \widetilde{\rm Cl}_E^\vee} F(\lambda_{\chi}) 
 \end{equation}
where  $H_{\alpha, \beta}$ is multiplicative  and given by 
$$H_{\alpha, \beta}(p^{\nu}) = \sum_{m=0}^{\nu} B_{\alpha, \beta}(\nu, m) b(p)^{m} c(p)^{\nu-m}. $$ %, \quad H_{\alpha, \beta}(q^{\nu}) =   \sum_{\substack{m=0\\ 2\mid b\nu + (\alpha - \beta) m}}^{\nu} \binom{\nu}{m}b(q)^{m} c(q)^{\nu-m} $$
%for $p$ split and $q$ inert. 
\end{lemma}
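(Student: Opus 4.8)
The plan is to reduce \eqref{sum} to the orthogonality input of Lemma \ref{orth} by expanding $f_\chi(n)$ multiplicatively and then, prime by prime, extracting the coefficient of the ``rational'' monomials in $\chi$. Since $f_\chi$ is completely multiplicative and $H_{\alpha,\beta}$ is to be multiplicative, and since the bound $n < ({D}/4)^{1/\max(\alpha,\beta)}$ guarantees that for every prime power $p^\nu\mid n$ one has $N(\mathfrak{p}^a\bar{\mathfrak{p}}^b)\le n^{\max(\alpha,\beta)}<{D}/4$ for all the relevant ideal exponents arising below, it suffices by Lemma \ref{lem:chi-sum}-style reasoning to treat a single prime power $n=p^\nu$ with $p=\mathfrak{p}\bar{\mathfrak{p}}$ split (if $n$ has an inert or ramified prime factor, $a_{\chi^\alpha}(p)$ or $a_{\chi^\beta}(p)$ is not of the desired shape; here the hypothesis that $b(p),c(p)$ are indexed only by split primes, together with the fact that for $p$ inert $a_{\chi^\nu}(p)=0$, handles the degenerate factors, so I will simply assume $n$ is composed of split primes, as in the statement of \eqref{sum}).

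For a split prime $p=\mathfrak p\bar{\mathfrak p}$ we have $a_{\chi^\alpha}(p)=\chi^\alpha(\mathfrak p)+\bar\chi^\alpha(\mathfrak p)$ and similarly for $\beta$. Writing $u=\chi(\mathfrak p)$, which has absolute value $1$, we get
\[
f_\chi(p)=(u^\alpha+u^{-\alpha})b(p)+(u^\beta+u^{-\beta})c(p),
\]
so that
\[
f_\chi(p^\nu)=\sum_{m=0}^{\nu}\binom{\nu}{m}b(p)^m c(p)^{\nu-m}(u^\alpha+u^{-\alpha})^m(u^\beta+u^{-\beta})^{\nu-m}.
\]
Expanding the binomials $(u^\alpha+u^{-\alpha})^m=\sum_{r=0}^m\binom{m}{r}u^{\alpha(2r-m)}$ and $(u^\beta+u^{-\beta})^{\nu-m}=\sum_{s=0}^{\nu-m}\binom{\nu-m}{s}u^{\beta(2s-(\nu-m))}$, the total power of $u$ in a given term is $\alpha(2r-m)+\beta(2s-\nu+m)=2\alpha r+(\beta-\alpha)m-\beta\nu+2\beta s$. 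Summing over $\chi\in\widetilde{\rm Cl}_E^\vee$ against $F(\lambda_\chi)$, Lemma \ref{orth} tells us that $\sum_\chi F(\lambda_\chi)\chi^{e}(\mathfrak p)=0$ unless $\mathfrak p^{e}$ is induced from $\Q$ — and since $\mathfrak p\ne\bar{\mathfrak p}$, the ideal $\mathfrak p^{e}$ is rational iff $e=0$ (the norm bound $N(\mathfrak p^{|e|})=p^{|e|}<{D}/4$ being satisfied because $p^{|e|}\le p^{\nu\max(\alpha,\beta)}\le n^{\max(\alpha,\beta)}<{D}/4$). Hence only the terms with $2\alpha r+(\beta-\alpha)m-\beta\nu+2\beta s=0$ survive, each contributing $\binom{\nu}{m}\binom{m}{r}\binom{\nu-m}{s}b(p)^m c(p)^{\nu-m}\sum_\chi F(\lambda_\chi)$. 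Collecting these is exactly $\sum_m B_{\alpha,\beta}(\nu,m)b(p)^m c(p)^{\nu-m}\cdot\sum_\chi F(\lambda_\chi)=H_{\alpha,\beta}(p^\nu)\sum_\chi F(\lambda_\chi)$. Multiplying over the prime power factors of $n$ (legitimate because distinct split primes give independent, and by Lemma \ref{orth} again the cross terms only survive when each prime's contribution is separately rational) yields \eqref{sum}.

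The main obstacle is purely bookkeeping: one must be careful that when several distinct split primes $p_1,\dots,p_t$ divide $n$, the vanishing criterion from Lemma \ref{orth} is applied to the product ideal $\prod_j \mathfrak p_j^{e_j}\bar{\mathfrak p}_j^{e_j'}$ and forces $e_j=e_j'$ for \emph{each} $j$ simultaneously (since the primes $\mathfrak p_j,\bar{\mathfrak p}_j$ are pairwise distinct prime ideals, a product is rational iff each prime appears with equal exponent in $\mathfrak p_j$ and $\bar{\mathfrak p}_j$), so the multivariate sum genuinely factors; the norm hypothesis $N\mathfrak a<{D}/4$ needed to invoke Lemma \ref{orth} is the reason for the constraint $n<({D}/4)^{1/\max(\alpha,\beta)}$, since the worst-case ideal appearing has norm at most $\prod_j p_j^{\nu_j\max(\alpha,\beta)}=n^{\max(\alpha,\beta)}$. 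Identity \eqref{50} is then just the special case $\nu=1$ (there is no way to solve $\pm\alpha\mp\beta=0$ with $\alpha\ne\beta$), and \eqref{50a} follows by a parity analysis of $2\alpha r+(\beta-\alpha)m-\beta n+2\beta s\equiv 0\pmod 2$, i.e. $(\beta-\alpha)m\equiv\beta n\pmod 2$, which I would record as a short remark rather than a separate computation.
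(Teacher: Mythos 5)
Your proof is correct and follows the same route as the paper: expand $f_\chi(p^\nu)$ binomially, identify the surviving terms via the orthogonality Lemma \ref{orth} (the ideal $\mathfrak{p}^{a}\bar{\mathfrak{p}}^{b}$ is rational iff $a=b$, giving exactly the linear constraint defining $B_{\alpha,\beta}$), and factor over distinct split primes using the norm bound $n^{\max(\alpha,\beta)}<D/4$. The only cosmetic difference is that you phrase the single-prime computation through $u=\chi(\mathfrak p)$ and powers $u^e$, whereas the paper manipulates the ideals $\mathfrak p^{\alpha r+\beta s}\bar{\mathfrak p}^{\alpha(m-r)+\beta(\nu-m-s)}$ directly — these are the same argument.
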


\begin{remark}\label{rem6} For $n$ consisting only of split primes, \eqref{50} ensures that \eqref{sum} is supported only on squarefull $n$, but this property fails if $n$ has ramified prime factors. This would make later estimates in Section \ref{77} more cumbersome. For simplicity we exclude ramified primes factors which is reflected in the assumption on trivial 2-torsion in Theorem \ref{thm2}.  
 \end{remark}

\begin{proof} For a split prime $p = \mathfrak{p}\bar{\mathfrak{p}}$ and $\nu \in \Bbb{N}$ we have
\begin{displaymath}
\begin{split}
f_{\chi}(p^{\nu})&= \sum_{m=0}^{\nu} \binom{\nu}{m}  \big(a_{\chi^{\alpha}}(p) b(p)\big)^m \big(a_{\chi^{\beta}}(p)c(p)\big)^{\nu-m}\\
&= \sum_{m=0}^{\nu} \binom{\nu}{m} b(p)^m c(p)^{\nu-m} \sum_{r=0}^m \binom{m}{r} 
 \sum_{s=0}^{\nu-m} \binom{\nu-m}{s} \chi\big( \mathfrak{p}^{\alpha r+ \beta s} \overline{\mathfrak{p}}_j^{\alpha(m-r) + \beta(\nu-m-s)}\big) . 
 \end{split}
\end{displaymath}
%For a ramified prime $q = \mathfrak{q}^2$ we have similarly 
%\begin{displaymath}
%\begin{split}
%f_{\chi}(q^{\nu})&=\sum_{m=0}^{\nu} \binom{\nu}{m} b(q)^m c(q)^{\nu-m} \chi(\mathfrak{q})^{\alpha m + \beta(\nu-m)}. 
%\end{split}
%\end{displaymath}
We write $n = \prod_{j}  p_j^{\nu_j} $ with pairwise distinct split primes $p_j = \mathfrak{p}_j\bar{\mathfrak{p}}_j$, so that $f_{\chi}(n)$ equals 
\begin{displaymath}
\begin{split}
  \prod_j\sum_{m_j=0}^{\nu_j} \sum_{r_j=0}^{m_j} \sum_{s_j=0}^{\nu_j-m_j} \binom{m_j}{r_j} \binom{\nu_j}{m_j} \binom{\nu_j-m_j}{s_j}  b(p_j)^{m_j} c(p_j)^{\nu_j-m_j} \chi\big(\mathfrak{p}_j^{\alpha r_j + \beta s_j} \overline{\mathfrak{p}}_j^{\alpha(m_j-r_j) + \beta(\nu_j-m_j-s_j)} \big).
 %  & \prod_i \sum_{m_i=0}^{\nu_i} \binom{\nu_i}{m_i} b(q_i)^{m_i} c(q_i)^{\nu_i-m_i} \chi(\mathfrak{q}_i)^{\alpha m_i +\beta(\nu_i - m_i)}. 
   \end{split}
\end{displaymath}
Summing $ f_{\chi}(n) $ over $\chi$, we see by Lemma \ref{orth} that only those terms with 
$$2\alpha r_j + (\beta - \alpha) m_j - \beta n_j + 2\beta s_j = 0$$
survive, so that the left hand side of \eqref{sum} equals
\begin{displaymath}
\begin{split}
%&\sum_{\chi \in \widetilde{\rm Cl}_E^\vee} F(C_{\chi, \infty})  f_{\chi}(n)\\
 \prod_j \sum_{m_j=0}^{\nu_j} B_{\alpha, \beta}(\nu_j, m_j) b(p_j)^{m_j} c(p_j)^{\nu_j-m_j} \sum_{\chi \in \widetilde{\rm Cl}_E^\vee} F(\lambda_{\chi}). 
\end{split}
\end{displaymath}
This features precisely the function $H_{\alpha, \beta}$ specified in the lemma. \end{proof}

The following result should of course be compared to Lemma \ref{lemma43}.
\begin{lemma}\label{lemma12}
Let $\alpha, \beta \in \Bbb{N}$ be two distinct positive integers.  Let $b(p)$, $c(p)$ be any sequences of real numbers indexed by split primes. Let $x \geq 2$, $k \in \Bbb{N}$  with $x^{2k} <({D}/4)^{1/\max(\alpha, \beta)}$. Then 
$$\sum_{\chi \in \widetilde{\rm Cl}_E^\vee} F(\lambda_{\chi}) \Big(\sum_{p \leq x} \frac{a_{\chi^{\alpha}}(p) b(p) + a_{\chi^{\beta}}(p)c(p)}{\sqrt{p}}\Big)^{2k}$$
is bounded by 
$$\sum_{\chi \in \widetilde{\rm Cl}_E^\vee}  F(\lambda_{\chi}) \begin{cases} %\displaystyle \frac{(2k)!}{k!  } \Big(  \sum_{ p \leq x} \frac{ (b(p) +c(p))^2}{p} \Big)^{k}, & \alpha = \beta,\\
\displaystyle\sum_{2\ell_1 + 3\ell_2 = 2k} \frac{(2k)!}{(\ell_1)! (\ell_2)!} \Big(  \sum_{ p \leq x} \frac{ b(p)^2 + c(p)^2}{p} \Big)^{\ell_1}\Big(\sum_{p \leq x} \frac{  b(p)^2|c(p)|}{p^{3/2}}\Big)^{\ell_2}, & \!\!\!  v_2(\alpha) \not= v_2(\beta),\\
\displaystyle \sum_{2\ell_1 + 4\ell_2 = 2k} \frac{(2k)!}{(\ell_1)! (\ell_2)!} \Big(  \sum_{ p \leq x} \frac{ b(p)^2 + c(p)^2}{p} \Big)^{\ell_1}\Big(\sum_{p \leq x} \frac{ (|b(p)| + |c(p)|)^4}{p^{2}}\Big)^{\ell_2}, & \!\!\! v_2(\alpha) = v_2(\beta). 
 \end{cases}$$
 \end{lemma}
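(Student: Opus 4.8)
The plan is to mirror the proof of Lemma \ref{lemma43}, with Lemma \ref{lem10} playing the role of Lemma \ref{lem:chi-sum}. Extend $b,c$ to completely multiplicative functions supported on integers all of whose prime factors split in $E$, and let $f_\chi$ be the completely multiplicative function of Lemma \ref{lem10}. Writing $\nu(p^a)=a!$ (extended multiplicatively) and $\Omega$ for the number of prime factors counted with multiplicity, the multinomial theorem gives
\[
\Big(\sum_{p\leq x}\frac{a_{\chi^\alpha}(p)b(p)+a_{\chi^\beta}(p)c(p)}{\sqrt p}\Big)^{2k}=\sum_{\substack{\Omega(n)=2k\\ p\mid n\Rightarrow p\leq x}}\frac{(2k)!}{\nu(n)}\,\frac{f_\chi(n)}{\sqrt n},
\]
where each such $n$ is composed of split primes and satisfies $n\leq x^{2k}<(D/4)^{1/\max(\alpha,\beta)}$. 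Summing over $\chi$ and invoking Lemma \ref{lem10}, the right side becomes $(2k)!\big(\sum_n \nu(n)^{-1}n^{-1/2}H_{\alpha,\beta}(n)\big)\sum_{\chi}F(\lambda_\chi)$; since $F\geq 0$, it then suffices to bound $(2k)!\sum_n |H_{\alpha,\beta}(n)|/(\nu(n)\sqrt n)$, with $n$ ranging over the same set, by the two claimed arithmetic quantities.

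\textbf{Step 2 (structure of $H_{\alpha,\beta}$).} By \eqref{50}, $H_{\alpha,\beta}(q)=0$, so every relevant $n$ is squarefull. By \eqref{50a}: if $v_2(\alpha)=v_2(\beta)$ then $H_{\alpha,\beta}(q^\nu)=0$ for odd $\nu$, hence $n=\ell^2$ with $\Omega(\ell)=k$; if $v_2(\alpha)\neq v_2(\beta)$ we relabel $(\alpha,b)\leftrightarrow(\beta,c)$ if necessary---legitimate by the symmetry noted in the footnote to Theorem \ref{thm2}---so that $v_2(\alpha)<v_2(\beta)$ and $B_{\alpha,\beta}(\nu,m)=0$ for odd $m$. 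For fixed $\nu,m$ the linear constraint defining $B_{\alpha,\beta}$ has at most one solution $(r,s)$, and each binomial factor there is dominated by its central value; this yields the exact evaluations $B_{\alpha,\beta}(2,0)=B_{\alpha,\beta}(2,2)=2$, $B_{\alpha,\beta}(2,1)=B_{\alpha,\beta}(3,0)=0$ and $|B_{\alpha,\beta}(3,2)|\leq 6$, hence $H_{\alpha,\beta}(q^2)=2(b(q)^2+c(q)^2)$ and (when $v_2(\alpha)<v_2(\beta)$) $H_{\alpha,\beta}(q^3)=B_{\alpha,\beta}(3,2)\,b(q)^2c(q)$. For all other $\nu$ we fall back on the trivial bound \eqref{triv}, i.e.\ $|H_{\alpha,\beta}(q^\nu)|\leq 2^\nu(|b(q)|+|c(q)|)^\nu$.

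\textbf{Step 3 (per-prime inequality and reassembly).} Put $w_2(q)=(b(q)^2+c(q)^2)/q$, $w_3(q)=b(q)^2|c(q)|/q^{3/2}$, $w_4(q)=(|b(q)|+|c(q)|)^4/q^2$, and set $e=3$ when $v_2(\alpha)\neq v_2(\beta)$ and $e=4$ otherwise. The heart of the matter is the elementary inequality, valid for every split prime $q$ and every $\nu\geq 2$ (with $\nu$ even when $e=4$),
\[
\frac{|H_{\alpha,\beta}(q^\nu)|}{\nu!\,q^{\nu/2}}\ \leq\ \sum_{2a+eb=\nu}\frac{w_2(q)^a\,w_e(q)^b}{a!\,b!}.
\]
For $\nu=2$ this is an equality (Step 2); for $\nu=3$ (only relevant when $e=3$) it follows from $|B_{\alpha,\beta}(3,2)|\leq 3!$; for $\nu\geq 4$ one bounds each surviving monomial $b(q)^mc(q)^{\nu-m}$ by peeling off at most one factor $w_e(q)$ (as forced by the parity of $\nu$) and controlling the rest via $b(q)^2,c(q)^2\leq q\,w_2(q)$ and $(|b(q)|+|c(q)|)^2\leq 2(b(q)^2+c(q)^2)$, the factor $2^\nu/\nu!$ absorbing all numerical losses. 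Taking the product of this inequality over the prime powers exactly dividing $n$, summing over all admissible $n$ with $\Omega(n)=2k$, and regrouping the resulting monomials according to $(\ell_1,\ell_2)=(\sum_q a_q,\sum_q b_q)$ reproduces exactly $\sum_{2\ell_1+e\ell_2=2k}\frac{(2k)!}{\ell_1!\,\ell_2!}\big(\sum_{p\leq x}w_2(p)\big)^{\ell_1}\big(\sum_{p\leq x}w_e(p)\big)^{\ell_2}$, which is the asserted bound in each case.

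\textbf{Main obstacle.} The structural part---multinomial expansion and the orthogonality input of Lemma \ref{lem10}---is routine. The real work lies in Steps 2--3: the uniform control of $B_{\alpha,\beta}(\nu,m)$ together with the $2$-adic parity dichotomy \eqref{50a}, and the verification of the per-prime inequality with constants clean enough for the multinomial coefficients to reassemble without loss. In particular the asymmetric shape $b(p)^2|c(p)|$ of the bound when $v_2(\alpha)\neq v_2(\beta)$ is dictated by \eqref{50a} and must be matched to the correct normalization of the pair $(\alpha,\beta)$; the large-$\nu$ range of the per-prime inequality, although elementary, is where most of the care is needed.
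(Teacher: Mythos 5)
Your proposal follows the same structural route as the paper: multinomial expansion, orthogonality via Lemma~\ref{lem10}, a per--prime inequality for $|H_{\alpha,\beta}(q^\nu)|/(\nu!\,q^{\nu/2})$, and reassembly by the multinomial theorem. Steps 1 and 2 and the reassembly in Step 3 are sound. The sticking point is the verification of the per--prime inequality for $\nu\geq 4$, which you dispose of with the sentence that ``the factor $2^\nu/\nu!$ absorbs all numerical losses.'' That is not true for small $\nu$ in the case $v_2(\alpha)\neq v_2(\beta)$ (i.e.\ $e=3$). Take $\nu=4$, $b=c=1$. The only decomposition $2a+eb=\nu$ is $(a,b)=(2,0)$, so the required per--prime inequality reads $|H_{\alpha,\beta}(q^4)|/4!\leq (b^2+c^2)^2/2!=2$. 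The trivial bound \eqref{triv} combined with \eqref{50a} gives $|H_{\alpha,\beta}(q^4)|\leq 2^4\big(\binom{4}{0}+\binom{4}{2}+\binom{4}{4}\big)=128$, hence $128/24\approx 5.3$, exceeding $2$; the $2^{\nu}=16$ is nowhere near absorbed. The inequality in fact holds because $B_{\alpha,\beta}(4,0)=B_{\alpha,\beta}(4,4)=6$ and $B_{\alpha,\beta}(4,2)=24$ (each constraint admits a single $(r,s)$), giving $H_{\alpha,\beta}(q^4)=36\leq 48$, but \eqref{triv} loses a full factor of roughly $3$. The same failure of \eqref{triv} occurs at $\nu=6,8$ on the even side and at $\nu=3,5,7$ on the odd side; only for $\nu\gtrsim 10$ does the factorial overwhelm $2^\nu$. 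So to make Step~3 rigorous you must evaluate (or sharply bound) $B_{\alpha,\beta}(\nu,m)$ directly in the intermediate range $4\leq\nu\lesssim 8$, as you did at $\nu=2,3$, and only then hand over to \eqref{triv}.

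Two further remarks. First, the case $v_2(\alpha)= v_2(\beta)$ ($e=4$) is genuinely easier: there the check reduces (after $(|b|+|c|)^2\leq 2(b^2+c^2)$) to $2^{3r-2}(r-2)!\leq (2r)!$ for $r\geq 2$, which holds; it is specifically $e=3$ where the proposal as written breaks down. Second, the same imprecision is present in the paper's own display, where the intermediate step $\frac{r!}{(2r)!}\binom{2r}{2m}2^{2r}\leq\binom{r}{m}$ is false for $r\leq 4$ (already at $r=1$, $m=0$ the left side is $2$ and the right side is $1$); so you are in good company, but ``routine'' is not the right word for this step --- the per--prime inequality is true, but its verification for small $\nu$ requires exact values of $B$ and is where the actual work lies.
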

 
\begin{proof} We use the notation from Lemmas \ref{lemma43} and \ref{lem10}. We have 
$$\sum_{\chi \in \widetilde{\rm Cl}_E^\vee} F(\lambda_{\chi})  \Big(\sum_{p \leq x} \frac{a_{\chi^{\alpha}}(p) b(p) + a_{\chi^{\beta}}(p)c(p)}{\sqrt{p}}\Big)^{2k} = (2k)!\sum_{p \mid n \Rightarrow p \leq x}  \frac{p_{2k}(n)}{\nu(n)\sqrt{n}} H_{\alpha, \beta}(n)\sum_{\chi \in \widetilde{\rm Cl}_E^\vee} F(\lambda_{\chi}) .$$
Let us now consider the right hand side of the claimed inequality according to  the two different cases. 

 \textit{Case 1.} Suppose that $v_2(\alpha) \not= v_2(\beta)$, without loss of generality $v_2(\alpha) < v_2(\beta)$. 
 Let $\Phi, \Psi$ be the completely multiplicative function extending $\Phi(p) = b(p)^2 + c(p)^2$ and $\Psi(p) = b(p)^2|c(p)|$. Note that $\Phi, \Psi$ are non-negative. Now the right hand side equals 
\begin{displaymath}
\begin{split}
  (2k)! \sum_{2\ell_1+3\ell_2 = 2k}   \sum_{ p \mid n_1 \Rightarrow p \leq x} \frac{p_{\ell_1}(n_1) \Phi(n_1)}{\nu(n_1)n_1}    \sum_{ p \mid n_2 \Rightarrow p \leq x} \frac{p_{\ell_2}(n_2)\Psi(n_2)}{\nu(n_2)n_2^{3/2}}.  \end{split}
\end{displaymath}
Note that $H_{\alpha, \beta}$ is supported only on squarefull numbers, cf.\ \eqref{50}.  Decompose uniquely $n= n_1^2 n_2^3$ with $\mu^2(n_2) = 1$. It then suffices to show
$$\frac{H_{\alpha, \beta}(n)}{\nu(n)}   \leq \frac{\Phi(n_1) \Psi(n_2)}{\nu(n_1)\nu(n_2)}$$
and again by multiplicativity  
$$\frac{H_{\alpha, \beta}(p^{2r})}{(2r)!} \leq \frac{(b(p)^2 + c(p)^2)^{r}}{r!}, \quad \frac{H_{\alpha, \beta}(p^{2r+3})}{(2r+3)!} \leq \frac{(b(p)^2 + c(p)^2)^{r}\cdot b^2(p)|c(p)|}{r!}. $$
Both inequalities of the previous display follow from \eqref{50a} and  \eqref{triv}:
$$\frac{r!}{(2r)!} H_{\alpha, \beta}(p^{2r}) \leq \frac{r!}{(2r)!}  \sum_{m=0}^{r} \binom{2r}{2m} 2^{2r} b(p)^{2m} c(p)^{2r - 2m} \leq \sum_{m=0}^{r} \binom{r}{m} b(p)^{2m} c(p)^{2r - 2m}  = (b(p)^2 + c(p)^2)^{r}$$
and
\begin{displaymath}
\begin{split}
\frac{r!}{(2r+3)!} H_{\alpha, \beta}(p^{2r+3}) &\leq \frac{r!}{(2r+3)!} \sum_{m=0}^{r} \binom{2r+3}{ 2m+2} 2^{2r + 3} b(p)^{2m+2} c(p)^{2r+1 - 2m} \\
&\leq \sum_{m=0}^{r} \binom{r}{m} b(p)^{2m} c(p)^{2r - 2m}  b^2(p)|c(p)|= (b(p)^2 + c(p)^2)^{r}b^2(p)|c(p)|.
\end{split}
\end{displaymath} 
%which is by \eqref{1} is implied from
%$$\frac{\nu!}{(2\nu)!} \sum_{m=0}^{\nu} B(2\nu, 2m) b(p)^{2m} c(p)^{2\nu - 2m} \leq \sum_{m=0}^{\nu} \binom{\nu}{m} b(p)^{2m} c(p)^{2\nu - 2m}  = (b(p)^2 + c(p)^2)^{\nu}.$$
%If  $\alpha = 2\nu + 3$ is odd,  this reduces to 
%$$\frac{F(p^{2\nu+3})}{(2\nu+3)!} \leq \frac{(b(p)^2 + c(p)^2)^{\nu}\cdot b^2(p)|c(p)|}{\nu!}$$
%which by \eqref{1} is implied from
%\begin{displaymath}
%\begin{split}
%&\frac{\nu!}{(2\nu+3)!} \sum_{m=0}^{\nu} B(2\nu+3, 2m+2) b(p)^{2m+2} c(p)^{2\nu+1 - 2m} \\
%&\leq \sum_{m=0}^{\nu} \binom{\nu}{m} b(p)^{2m} c(p)^{2\nu - 2m}  b^2(p)|c(p)|= (b(p)^2 + c(p)^2)^{\nu}b^2(p)|c(p)|.
%\end{split}
%\end{displaymath}

 \textit{Case 2.} Next suppose that $v_2(\alpha) = v_2(\beta)$. We argue similarly. Let $\tilde{\Psi}$ be the completely multiplicative function extending $\tilde{\Psi}(p) = (|b(p)| + |c(p)|)^4$. The   right hand side equals 
\begin{displaymath}
\begin{split}
  (2k)! \sum_{2\ell_1+4\ell_2 = 2k}   \sum_{ p \mid n_1 \Rightarrow p \leq x} \frac{p_{\ell_1}(n_1) \Phi(n_1)}{\nu(n_1)n_1}    \sum_{ p \mid n_2 \Rightarrow p \leq x} \frac{p_{\ell_2}(n_2)\tilde{\Psi}(n_2)}{\nu(n_2)n_2^{2}} . \end{split}
\end{displaymath}
Note that $H_{\alpha, \beta}$ is supported only on squares, cf.\  \eqref{50a}.   It then suffices to show
%$$\frac{F_{\alpha, \beta}(n)}{\nu(n)}   \leq \frac{\Phi(n_1) \Psi(n_2)}{\nu(n_1)\nu(n_2)}$$
%and again by multiplicativity  
$$\frac{H_{\alpha, \beta}(p^2)}{2!} \leq  b(p)^2 + c(p)^2 ,%\quad \frac{F_{\alpha, \beta}(p^4)}{4!} \leq  (|b(p)| + |c(p)|)^4 
\quad \frac{H_{\alpha, \beta}(p^{2r})}{(2r)!} \leq \frac{(b(p)^2 + c(p)^2)^{r-2}\cdot (|b(p)| + |c(p)|)^4}{(r-2)!} $$
for $r \geq 2$. We have $H_{\alpha, \beta}(p^2) = 2( b(p)^2 + c(p)^2)$ [this uses $\alpha \not= \beta$ and is actually the only point where this assumption is used] and 
$$H_{\alpha, \beta}(p^{2r}) \leq \sum_{m=0}^{2r} \binom{2r}{m} 2^{2r}|b(p)|^m |c(p)|^{2r-m} \leq 2^{2r} (|b(p)| + |c(p)|)^{2r}. $$
As $b(p)^2 + c(p)^2 \geq  \frac{1}{2}(|b(p)| + |c(p)|)^2$, the last inequality follows from
$$\frac{2^{2r}}{(2r)!} \leq \frac{1}{2^{r-2} (r-2)!}$$
for $r \geq 2$. This completes the proof. 
\end{proof}

\section{A bound on the second moment}\label{sec8}

In preparation for the proof of Theorem \ref{thm3}, we start with a bound for the second moment of $L(1/2,\pi_j\times\chi)$. This will only serve a technical purpose to exclude very large values of $L$-functions. We continue to denote by $F$ be an even non-negative Schwartz-class function whose Fourier transform has support in $[-\frac{1}{2\pi}, \frac{1}{2\pi}]$. Note that for $Q > 1$ the function $x \mapsto F(x/Q)$ is still an even non-negative Schwartz-class function whose Fourier transform has support in $[-\frac{1}{2\pi}, \frac{1}{2\pi}]$. 
For later purposes we record for $k \in \Bbb{N}$ the elementary estimate
\begin{equation}\label{fac}
\frac{(2k)!}{k!} \leq \sqrt{2}  \Big(\frac{4}{e}k\Big)^k.
\end{equation}
(The constant $\sqrt{2}$ play no role in the following.) 
We observe that,  when $E$ is imaginary, the conductor of $\pi \times \chi$ is constant within the family of  $\chi \in \widetilde{\rm Cl}_E^\vee$ and depends only on $D$. If $E$ is real, the conductor of  $\pi \times \chi$ does depends on $\chi$, via its archimedean component $\chi_{\infty}$.  In either case, it is a consequence of the class number formula, and the fact that $\lambda_{\chi}$ runs through a one-dimensional lattice of volume $\pi/\log \epsilon$  when $E$ is real, that
\begin{equation}\label{HD}
 \sum_{ \chi \in \widetilde{\rm Cl}_E^\vee}F(\lambda_{\chi}/Q)  \ll Q H_E, \quad H_E :=  L(1, \eta_E) \sqrt{{D}}  %=: H_E
  \end{equation}
for $Q \geq 1$. Note that  $H_E\asymp \text{vol}(\widetilde{\rm Cl}_E)$ by \eqref{vol1}.

 % In this case, the weight function $F(\lambda_{\chi})$, which is Schwartz class, enables us to truncate the $\chi$-sum at, say, $\lambda_{\chi} \leq {D}$ at the cost of a negligible error. 
Since we allow polynomial dependence on the representations $\pi_1, \pi_2$ in the bound of Theorem \ref{thm3}, we will assume throughout this section that
$$Q_{\pi}  \leq {D}^{1/10}, $$
so that   $$ \log Q_{\pi_j \times \chi} \ll  \log {D},$$
provided that  $\lambda_{\chi} \leq {D}$. Moreover, we assume for the rest of the paper that ${D}$ is sufficiently large.  We also fix $j \in \{1, 2\}$ and drop it from the notation. 
 With this in mind we define
\begin{equation}\label{curlyC}
\mathscr{C} = \log\log {D} + (\log Q_{\pi})^{1/3}.
\end{equation}
We keep the general assumptions and notations from Section \ref{sec41}. 
%If we assume the stronger condition $C_f \ll \exp((\log\log D)^3)$, then $\mathscr{C} \asymp \log\log D$. 

\begin{lemma}\label{second} Assume that
$Q_\pi \leq {D}^{1/10}$, and let $1 \leq Q \leq {D}^{1/5}$. For $\varepsilon > 0$ we have 
$$\sum_{\chi \in \widetilde{\rm Cl}_E^\vee}F\Big(\frac{\lambda_{\chi}}{Q}\Big) L(1/2, \pi \times \chi)^2 \ll \exp(O_{\varepsilon}(\mathscr{C}^{1+\varepsilon}))QH_E.$$%\sum_{\chi \in \widetilde{\rm Cl}_E^\vee}F\Big(\frac{\lambda_{\chi}}{Q}\Big) .$$
\end{lemma}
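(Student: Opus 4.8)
The plan is to bound the second moment $\sum_{\chi}F(\lambda_\chi/Q)L(1/2,\pi\times\chi)^2$ by combining the GRH upper bound on $\log L(1/2,\pi\times\chi)$ from Corollary \ref{3line} with a high-moment computation of the resulting short Dirichlet polynomials, controlled via the orthogonality estimate in Lemma \ref{lemma43}.

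\medskip

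\textit{Step 1: Reduce to a Dirichlet polynomial.} Fix $x$ a small power of $D$, say $x=D^{1/(10k)}$ where $k\asymp \mathscr C$ will be chosen below, so that the hypothesis $x^{2k}<(D/4)^{1/\nu}$ of Lemma \ref{lemma43} holds with $\nu=1$ and $\nu=2$. Since $\lambda_\chi\le D$ on the support of $F(\lambda_\chi/Q)$ and $Q_\pi\le D^{1/10}$, we have $(\log Q_{\pi\times\chi})^{4+\varepsilon}\le x$, so Corollary \ref{3line} applies. It gives
\[
\log L(1/2,\pi\times\chi)\le P_1(\chi)+P_2(\chi)+\mu_{\pi,D}(x)+O\Big(\frac{\log D}{\log x}\Big)+O(\log\log\log D),
\]
where $P_1(\chi)=\sum_{p\le x}a_\chi(p)\lambda(p)p^{-1/2-1/\log x}\frac{\log(x/p)}{\log x}$ is a linear form in $a_\chi$, $P_2(\chi)$ is the analogous sum in $a_{\chi^2}(p)$ over split $p\le\sqrt x$, and by \eqref{boundmu} and \eqref{also} both $\mu_{\pi,D}(x)$ and $P_2(\chi)$ are $O(\mathscr C)$ uniformly in $\chi$ (the latter using the crude triangle-inequality bound $|a_{\chi^2}(p)|\le 2$). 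With $x=D^{1/(10k)}$ we get $\log D/\log x\ll k\ll\mathscr C$. Hence
\[
L(1/2,\pi\times\chi)^2\ll \exp\big(2P_1(\chi)\big)\cdot\exp\big(O(\mathscr C)\big).
\]

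\medskip

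\textit{Step 2: Expand the exponential and apply orthogonality.} Using the elementary inequality $e^{2P_1}\le \big(1+\tfrac{2P_1}{2m}\big)^{2m}\le \dots$ — more cleanly, bound $\exp(2P_1(\chi))\le e^{2|P_1|}\le \sum_{m\le M}\frac{(2|P_1|)^m}{m!}+(\text{tail})$ and, after separating the tail where $|P_1(\chi)|>M$ for a suitable threshold $M\asymp\mathscr C$, control $\sum_\chi F(\lambda_\chi/Q)|P_1(\chi)|^{2m}$ by Lemma \ref{lemma43} with $\nu=1$, $b(p)=\lambda(p)p^{-1/\log x}\frac{\log(x/p)}{\log x}$. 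That lemma gives
\[
\sum_\chi F(\lambda_\chi/Q)\,P_1(\chi)^{2m}\le \frac{(2m)!}{m!}\Big(\tfrac12\sum_{p\le x}\tfrac{(1+\eta_E(p))b(p)^2}{p}\Big)^{m}\sum_\chi F(\lambda_\chi/Q).
\]
The inner prime sum is $\ll\sum_{p\le x}\lambda(p)^2/p\ll \log\log x+(\log Q_\pi)^{1/3}\ll\mathscr C$ by \eqref{add}. Combining with \eqref{fac} ($\frac{(2m)!}{m!}\ll(4m/e)^m$) and \eqref{HD} ($\sum_\chi F(\lambda_\chi/Q)\ll QH_E$), the $m$-th term is $\ll (C m\mathscr C)^m\, QH_E$ for an absolute $C$. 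Optimizing over the truncation length $M$ and summing the series in $m$ (geometric-type once $m\gg \mathscr C$ by Stirling) yields $\sum_\chi F(\lambda_\chi/Q)\exp(2|P_1(\chi)|)\ll \exp(O(\mathscr C^{1+\varepsilon}))QH_E$; the $\varepsilon$ absorbs the interplay between the factorial growth and the threshold. Multiplying by the deterministic factor $\exp(O(\mathscr C))$ from Step 1 finishes the bound.

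\medskip

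\textit{Main obstacle.} The delicate point is the bookkeeping in Step 2: one needs the truncated Taylor expansion of $\exp(2P_1)$ together with a pointwise substitute for the very rare $\chi$ with $|P_1(\chi)|$ abnormally large, and one must verify that the combinatorial loss (factorials versus the size $\mathscr C$ of the prime sums) only costs $\exp(O(\mathscr C^{1+\varepsilon}))$ rather than something larger — this is exactly the Soundararajan-type moment argument, and here it is comparatively soft since we only want an upper bound that is subpolynomial in $D$ (recall $\mathscr C\asymp\log\log D$, so $\exp(O(\mathscr C^{1+\varepsilon}))=(\log D)^{o(1)}$ in the relevant regime, and in any case $\ll D^{o(1)}$). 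A secondary technical nuisance is ensuring all the hypotheses of Corollary \ref{3line} and Lemma \ref{lemma43} ($x^{2k}<(D/4)$, $x\ge(\log Q_{\pi\times\chi})^{4+\varepsilon}$) are simultaneously met for $\lambda_\chi$ ranging over the (possibly infinite, for real $E$) support of $F(\cdot/Q)$; the truncation by $F$ and the assumption $Q\le D^{1/5}$ are precisely what make this work.
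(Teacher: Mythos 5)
Your overall strategy -- truncate via the rapid decay of $F$, invoke Chandee's GRH bound (Corollary~\ref{3line}) to replace $\log L$ by a short Dirichlet polynomial, control moments of that polynomial through Lemma~\ref{lemma43}, and sum -- is the same as the paper's. But the execution has a genuine gap in the handling of the tail, and it is not a ``secondary technical nuisance'' as you suggest; it is the crux.

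The problem is that you fix a single $x = D^{1/(10k)}$ with $k\asymp\mathscr{C}$ once and for all. This caps the available moments at $k\ll\mathscr{C}$ (because Lemma~\ref{lemma43} needs $x^{2k}<D/4$), which is fatal for the tail. Write $\mathcal{S}(V)$ for the measure of $\chi$ with $\log L(1/2,\pi\times\chi)>V$; by Chandee's bound $V$ can range up to about $\log D/\log\log D$. With $k\ll\mathscr{C}$ fixed, Markov gives $\mathcal{S}(V)\ll(ck\mathscr{C}/V^2)^k QH_E$, which decays only \emph{polynomially} in $V$, namely like $V^{-O(\mathscr{C})}$. Then $e^V\mathcal{S}(V/2)$ is an increasing function once $V\gg\mathscr{C}$, so the integral $\int e^V\mathcal{S}(V/2)\,dV$ is dominated by the top end $V\approx\log D/\log\log D$ and the bound you obtain is $\gg\exp(c\log D/\log\log D)QH_E$, far worse than $\exp(O(\mathscr{C}^{1+\varepsilon}))QH_E$. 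The same failure shows up in your Taylor-expansion version: you cannot bound $\sum_\chi e^{2|P_1|}\mathbf{1}_{|P_1|>M}$ with $M\asymp\mathscr{C}$, because the pointwise size of $e^{2|P_1|}$ can reach $\exp(O(\log D/\log\log D))$ while the moment bound for $k\ll\mathscr{C}$ gives at most a factor of $(\log D)^{-O(1)}$, nowhere near enough to cancel it.

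The paper's fix is to make $x$ -- and hence the admissible number of moments $k$ -- depend on the value-level $V$. It sets up the layer-cake integral $\int e^{\mathcal{V}}\mathcal{S}(\mathcal{V}/2)\,d\mathcal{V}$ and, for each $V$ in the range $A\mathscr{C}^{1+\varepsilon}\le V\ll\log D/\log\log D$, chooses $x=D^{5B/V}$, so that the Chandee error $\log D/\log x\asymp V/B$ is $\le V/4$, and the constraint $x^{2k}<D/4$ now permits $k\asymp V$ moments. With $k\asymp V$, Markov plus Lemma~\ref{lemma43} gives $\mathcal{S}(V)\ll\exp(-c(\varepsilon)V\log V)QH_E$, a \emph{superexponential} decay in $V$ that beats $e^{V}$ and collapses the integral to $\exp(O(\mathscr{C}^{1+\varepsilon}))QH_E$. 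This $V$-adaptive choice of $x$ (Soundararajan's device) is the missing ingredient in your argument.
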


\begin{proof}
For $\mathcal{V}\in \Bbb{R}$ we define
\[
\mathcal{S}(\mathcal{V}):=   \sum_{  \log L(1/2, \pi \times \chi) > \mathcal{V} }F^{\ast}\Big(\frac{\lambda_{\chi}}{Q}\Big) 
\]
(with the convention $\log 0 = -\infty$), where $F^{\ast}(x) = F(x) \delta_{|x| \leq {D}/Q}$. 
Using the convexity bound for $L(1/2, \pi \times \chi)$ and the rapid decay of $F$ and then  partial summation, we have
\begin{displaymath}
\begin{split}
\sum_{\chi \in \widetilde{\rm Cl}_E^\vee}F(\lambda_{\chi}) L(1/2, \pi \times \chi)^2 &= \sum_{\chi \in \widetilde{\rm Cl}_E^\vee}F^{*}(\lambda_{\chi}) L(1/2, \pi \times \chi)^2 + O({D}^{-10}) \\
&=  \int_{\Bbb{R}} e^\mathcal{V}  \mathcal{S}(\mathcal{V}/2) d\mathcal{V} + O({D}^{-10}) .
\end{split}
\end{displaymath}
 We may truncate the integral at $\mathcal{V}\ll\log {D}/\log\log {D}$, in view of  \eqref{54}, since otherwise $\mathcal{S}(\mathcal{V}/2) = 0$.  Moreover, using the trivial bound $ \mathcal{S}(\mathcal{V}/2)\leq \sum F(\lambda_{\chi}/Q)$  and \eqref{HD}, we have 
 \[
\int_{-\infty}^{A\mathscr{C}^{1+\varepsilon}}e^\mathcal{V} \mathcal{S}(\mathcal{V}/2) d\mathcal{V}\leq \sum_{\chi \in \widetilde{\rm Cl}_E^\vee}F\Big(\frac{\lambda_{\chi}}{Q}\Big)\int_{-\infty}^{A\mathscr{C}^{1+\varepsilon}}e^\mathcal{V}d\mathcal{V}\ll QH_E e^{A\mathscr{C}^{1+\varepsilon}}
\]
%In the same we can treat the trivial character. 
for $A > 0$. Put $V=\mathcal{V}/2$. % and $\mathcal{S}(V)^{\ast} = \mathcal{S}(V) \setminus \{\text{triv}\}$. 
  From the above considerations, we may now assume that $V$ verifies 
\begin{equation}\label{AA}
A\mathscr{C}^{1+\varepsilon} \leq V \leq B \frac{\log {D}}{\log\log {D}}
\end{equation}
 for some sufficiently large constants $A, B$. We shall show that
\begin{equation}\label{2nd-moment-bd}
  \mathcal{S}(V) \leq   \exp(-c(\varepsilon) V\log V)QH_E
\end{equation}
for $V$ satisfying \eqref{AA} and some $c(\varepsilon) > 0$. In this way,
\[
\int_{2A\mathscr{C}^{1+\varepsilon}}^{\infty} e^\mathcal{V}  \mathcal{S}(\mathcal{V}/2) d\mathcal{V} \ll QH_E,
\]
 which suffices for the proof of the lemma. The rest of the proof is devoted to \eqref{2nd-moment-bd}. Choose 
 \begin{equation}\label{x}
   x = {D}^{5B/  V} = \exp\Big( \frac{5B}{V} \log {D}\Big),
   \end{equation}
which by \eqref{AA} implies $x \geq  \exp(5 \log\log {D}) = (\log {D})^{5} \geq (\log Q_{\pi \times 
    \chi})^{4+\epsilon}$. For $\chi$ counted by $\mathcal{S}(V)$  we apply Corollary \ref{3line}, and  conclude from \eqref{3displ}, \eqref{boundmu} and \eqref{also} that 
   \begin{equation}\label{cor7}
   V \leq  \log L(1/2, \pi \times \chi) \leq \sum_{ p \leq x}  \frac{a_{\chi}(p)\lambda_{\pi}(p)}{ p^{1/2 + 1/\log x}} \frac{\log(x/p)}{\log x} + O\Big(\log\log x + (\log Q_{\pi})^{1/3} +  \frac{\log {D}}{\log x}\Big)
   \end{equation}
 Recalling \eqref{curlyC} and taking $B$ sufficiently large in \eqref{x} we find 
   \[
   V\leq \sum_{ p \leq x}  \frac{a_{\chi}(p)\lambda_{\pi}(p)}{ p^{1/2 + 1/\log x}} \frac{\log(x/p)}{\log x}  +  \frac{1}{4} V + O(\mathscr{C}).
   \]
Hence  if $A$  in \eqref{AA} is sufficiently large, we have
$$\frac{1}{2}V < \sum_{ p \leq x}  \frac{a_{\chi}(p)\lambda_{\pi}(p)}{ p^{1/2 + 1/\log x}} \frac{\log(x/p)}{\log x}.$$
For any $k \geq 0$, this implies (by positivity)
\[
  \mathcal{S}(V) \leq  \sum_{\chi \in \widetilde{\rm Cl}_E^\vee} F\Big(\frac{\lambda_{\chi}}{Q}\Big)\frac{2^{2k}}{V^{2k}} \Big(\sum_{  p \leq x}  \frac{a_{\chi}(p)\lambda_{\pi}(p)}{ p^{1/2 + 1/\log x}} \frac{\log(x/p)}{\log x}\Big)^{2k}.
\]
Lemma \ref{lemma43} (with $\nu=1$) then shows that, as long as $k$ verifies $x^{2k} < {D}/4$,  we have
 \[
     \mathcal{S}(V)\leq \sum_{\chi \in \widetilde{\rm Cl}_E^\vee} F\Big(\frac{\lambda_{\chi}}{Q}\Big) \frac{(2k)!}{k!}  \frac{2^{2k}}{V^{2k}}  \Big( \sum_{ p \leq x } \frac{\lambda_{\pi}(p)^2 }{p} \Big)^{k},
\]
where we have used the simple inequality $|1 + \eta_E(p)|\leq 2$. Now by   \eqref{add} and \eqref{x} we have
\[
\sum_{ p \leq x } \frac{\lambda_{\pi}(p)^2 }{p}\ll \mathscr{C}.
\]
Recalling \eqref{fac} and \eqref{HD} we deduce
   \begin{displaymath}
 \begin{split}
    \mathcal{S}(V) &  \ll QH_E  \frac{(2k)!}{k!}  \frac{2^{2k}}{V^{2k}}  \big(O(\mathscr{C})\big)^{k} \leq QH_E \Big(\frac{16k}{eV^2} O(\mathscr{C})\Big)^k. 
 \end{split}
 \end{displaymath}
 The condition $x^{2k} < {D}/4$ allows us to take $$k <  \frac{V}{11B} \asymp V.$$
For such $k$ we have $k \mathscr{C}/V^2  \ll V^{-\varepsilon/(1+\varepsilon)}$ by \eqref{AA}. Inserting this proves \eqref{2nd-moment-bd}.
\end{proof}

\section{Proof of Theorem \ref{thm3}}\label{sec9}

 In this section we finally prove Theorem \ref{thm3}. We first prove part (a)  of Theorem \ref{thm3}, and then in Section \ref{77} pass to the proof of part (b). In both cases, the proof scheme is similar to the proof of Lemma \ref{second}, but the  estimates are more delicate.

\subsection{Setting up the proof of Theorem \ref{thm3}(a)}\label{sec:setting-up}

For $\chi \in \widetilde{\rm Cl}_E^\vee$ let
\begin{equation}\label{defL}
\mathcal{L}(\chi)=L(1/2,  \pi_1\times \chi) L(1/2,  \pi_2\times \chi).
\end{equation}
We continue to assume that $D$ is sufficiently large and $Q_{\pi_1}, Q_{\pi_2} \leq {D}^{1/10}$.   
For $x > 0$ we define
\begin{equation}\label{defmu}
\begin{split}
\mu_{D}(x)  = &\frac{1}{2} \log L(1, \eta_E) + \frac{1}{4} \log L(1, \text{Ad}\, \pi_1)  + \frac{1}{4} \log L(1, \text{Ad}\, \pi_2)\\
& -  \frac{1}{4} \log L(1, \text{Ad}\, \pi_1 \times \eta_E)  - \frac{1}{4} \log L(1, \text{Ad}\, \pi_2 \times \eta_E) - \frac{1}{2}\log\log x
\end{split}
\end{equation}
and note that by \eqref{defmuj} we have $\mu_D(x) = \frac{1}{2}(\mu_{1, D}(x) + \mu_{2, D}(x))$. We also define
\begin{equation}\label{defsigma}
\begin{split}
  {\rm var}_{ D}(x) =&  \frac{1}{2} \log\log x + \frac{1}{2}\log L(1, \eta_E) + \frac{1}{4} \log L(1, \text{Ad} \pi_1 \times \theta_E) \\
  &+  \frac{1}{4} \log L(1, \text{Ad} \pi_2 \times \theta_E) + \frac{1}{2} \log L(1, \pi_1 \times \pi_2 \times \theta_E).
   \end{split}
\end{equation}
We write $\mu_D = \mu_D({D})$ and $\text{var}_D = \text{var}_D({D})$. 
%The significance of this expression is that b

Recalling the notation $H_E$ from \eqref{HD}, our primary goal is to prove the bound
%$$ \sum_{\chi \in \widetilde{\rm Cl}_E^\vee} F(\lambda_{\chi}) \mathcal{L}(\chi)^{1/2}. $$
\begin{equation}\label{primary}
 \sum_{\chi \in \widetilde{\rm Cl}_E^\vee}F\Big(\frac{\lambda_{\chi}}{Q_{\pi_1}Q_{\pi_2}}\Big) \mathcal{L}(\chi)^{1/2} \ll  H_E\Big(\exp\Big(\mu_{D} + \Big(\frac{1}{2} +  \varepsilon\Big) {\rm var}_{ D}\Big) + \exp(- \textstyle\frac{1}{3}\sqrt{\log {D}})\Big),
 \end{equation}
 provided that $Q_{\pi_1}, Q_{\pi_2} \leq {D}^{1/10}$. Here and henceforth all implied constants are allowed to depend polynomially on $Q_{\pi_1}$ and $Q_{\pi_2}$. 
 The proof of \eqref{primary} extends over  the next few subsections and will be completed in Section \ref{compl}. We then show in Section \ref{end} how to deduce Theorem \ref{thm3}(a) from \eqref{primary}.

\subsection{Some useful bounds}
We now relate $\mu_D$ and ${\rm var}_D$ to short Dirichlet polynomials. Let
$$\mathscr{L}(s) := \frac{1}{2}\log L(s, \eta_E) + \frac{1}{4} \log L(s, \text{Ad}\, \pi_1 \times \theta_E) +  \frac{1}{4} \log L(s, \text{Ad}\, \pi_2 \times \theta_E) + \frac{1}{2} \log L(s, \pi_1 \times \pi_2 \times \theta_E).$$
 By \eqref{dir1}, \eqref{dir2} and \eqref{mult} the Dirichlet series coefficients of $\mathscr{L}(s)$ at primes $p \nmid N_1N_2$ are
\begin{equation*}%\label{dirich}
\begin{split}
& \frac{2\eta_E(p) + (1 + \eta_E(p))(\lambda_1(p^2) + 2 \lambda_1(p) \lambda_2(p) + \lambda_2(p^2))}{4}=  \frac{  (1 + \eta_E(p))(\lambda_1(p) +   \lambda_2(p))^2 - 2}{4}. % \geq -\frac{1}{2}.
\end{split}
\end{equation*}
Similar formulae hold if $p$ divides exactly one of $N_1, N_2$ and if it divides both $N_1$ and $N_2$. We write this collectively as
%Similarly, if $p$ divides exactly one of $N_1, N_2$ the Dirichlet coefficient at $p$  is
%$$\frac{  (1 + \eta_E(p))(\lambda_1(p) +   \lambda_2(p))^2 - 1 + \eta}{4}$$
\begin{equation*}%\label{dirich}
\frac{  (1 + \eta_E(p))(\lambda_1(p) +   \lambda_2(p))^2 - \kappa_1(p)}{4}, \quad \kappa_1(p) \in \{2, 1 - \eta_E(p), 4\eta_E - 2\}.
\end{equation*}
%This is obviously $\geq -1/2$, and by \eqref{dir2} and \eqref{mult} this lower bound remains true for all primes (distinguish $p \mid N_1$, $p \nmid N_2$ and $p \mid (N_1, N_2)$). 
 We deduce from this that
\begin{equation}\label{boundsigma}
\frac12 \log\log {D} + O(\log\log\log {D} )  \leq {\rm var}_D  \ll (\log {D})^{1/3},
\end{equation}
where we use  \eqref{titch1} for the lower bound and \eqref{log-series}, \eqref{titch2} and \eqref{add} for the upper bound. Moreover, using \eqref{log-series}, we deduce that
\begin{equation}\label{dirich}
\begin{split}
\sum_{p \leq x} \frac{  (1 + \eta_E(p))(\lambda_1(p) +   \lambda_2(p))^2 }{2p} &= 2 \log \mathscr{L}(1)  + O(1) + \log\log x + O\Big( \sum_{p \mid N_1N_2} \frac{1}{p}\Big)\\
& = 2 \text{var}_D(x) + O(\log\log\log N_1N_2),
\end{split}
\end{equation}
provided that $x  > (\log Q_{\pi_1} Q_{\pi_2} {D})^{2+\varepsilon}$. 

%This is useful, because sums of  $(1 + \eta_E(p))(\lambda_1(p) +   \lambda_2(p))^2$ over various sets of primes will come up in our application of Lemma \ref{lemma43} below. 

 %Now the Dirichlet coefficients of
%\[
%L(s, \text{Ad} \pi_1 \times \theta_E) L(s, \text{Ad} \pi_2 \times \theta_E) L(s, \pi_1 \times  \pi_2 \times \theta_E)^2
%\]
%at primes   $p \nmid (N_1, N_2)$  are
%\begin{align*}
%(\lambda_1(p^2)+\lambda_2(p^2)+2\lambda_1(p)\lambda_2(p))&(1 + \eta_E(p))\\
%=(( \lambda_1(p) + \lambda_2(p) )^2-\psi_1(p)- \psi_2(p))&(1 + \eta_E(p)) \geq -4,
%\end{align*}
%by the Hecke relations \eqref{mult}. 
%and deduce %By \eqref{titch1}, \eqref{titch2},  \eqref{add}  we deduce

By a similar computation, the Dirichlet coefficients of 
\begin{equation*}%\label{dirich1}
  - \log L(1, \text{Ad} \,\pi_1 \times \theta_E) - \log L(1, \text{Ad}\, \pi_2 \times \theta_E) + 2  \log L(1, \pi_1 \times \pi_2 \times \theta_E)
  \end{equation*}
are 
$$\kappa_2(p) -(1 + \eta_E)(\lambda_1(p) + \lambda_2(p))^2, \quad \kappa_2(p) \in \{2, 1 - \eta_E(p), 4\eta_E(p) - 2\},  $$
in particular, since $\kappa_2(p) \leq 2$, we obtain by \eqref{titch1} that 
\begin{equation}\label{dirich2}
 \frac{ L(1, \pi_1 \times \pi_2 \times \theta_E)}{L(1, \text{Ad} \,\pi_1 \times \theta_E)L(1, \text{Ad} \,\pi_2 \times \theta_E)} \ll (\log\log Q_{\pi_1} Q_{\pi_2}{D})^2.
\end{equation}
Analogously the Dirichlet coefficients of 
\begin{equation*}%\label{dirich1}
  - \log L(1, \text{Ad} \,\pi_1) - \log L(1, \text{Ad}\, \pi_2 ) +   \log L(1, \pi_1 \times \pi_2 \times \theta_E)
  \end{equation*}
are
$$\kappa_3(p) + (1+\eta_E(p)) \lambda_1(p) \lambda_2(p) - \lambda_1(p)^2   - \lambda_2(p)^2$$
with $\kappa_3(p) \leq 2$, so that 
\begin{equation}\label{dirich3}
 \frac{ L(1, \pi_1 \times \pi_2 \times \theta_E)}{L(1, \text{Ad} \,\pi_1  )L(1, \text{Ad} \,\pi_2  )} \ll (\log\log Q_{\pi_1} Q_{\pi_2}{D})^2.
\end{equation}

\subsection{Preliminary reductions}\label{prelim} As in the proof of Lemma \ref{second} we can restrict to characters with $|\lambda_{\chi}| \leq {D}$ by the rapid decay of $F$. Again we write $F^{\ast}(x) = F(x) \delta_{|x| \leq {D}/(Q_{\pi_1}Q_{\pi_2})}$.  
Recalling the notation \eqref{defL}, we define, similarly to the previous section,
$$\mathcal{T}(V)  := \sum_{\log \mathcal{L}(\chi) > V} F^{\ast}\Big(\frac{\lambda_{\chi}}{Q_{\pi_1}Q_{\pi_2}}\Big).$$
 %, and we slightly re-define $$\mathscr{C} = \max\big(\log\log C_{f, D} + (\log C_f)^{1/3}, \log\log C_{g, D} + (\log C_g)^{1/3} \big),
 We first deal with large values of $V$. For $U \geq 1$ we have 
\begin{displaymath}
\begin{split}
 \sum_{\mathcal{L}(\chi) \geq U} F\Big(\frac{\lambda_{\chi}}{Q_{\pi_1}Q_{\pi_2}}\Big)\mathcal{L}(\chi)^{1/2} &\leq \frac{1}{U^{1/2}} \sum_{\mathcal{L}(\chi) \geq U}F\Big(\frac{\lambda_{\chi}}{Q_{\pi_1}Q_{\pi_2}}\Big) \mathcal{L}(\chi)\\
 &\leq \frac{1}{U^{1/2}} \sum_{ \chi \in \widetilde{\rm Cl}_E^\vee}F\Big(\frac{\lambda_{\chi}}{Q_{\pi_1}Q_{\pi_2}}\Big) \big(L(1/2, \pi_1 \times \chi)^2 +L(1/2, \pi_2 \times \chi)^2\big).
\end{split}
\end{displaymath}
We now invoke Lemma  \ref{second}, and recall the definition of $\mathscr{C}$ from \eqref{curlyC}, to obtain
\begin{displaymath}
\begin{split}
 \sum_{\mathcal{L}(\chi) \geq U}F\Big(\frac{\lambda_{\chi}}{Q_{\pi_1}Q_{\pi_2}}\Big)  \mathcal{L}(\chi)^{1/2} &  \ll U^{-1/2} e^{(\log {D})^{2/5}}   H_E .
% & \ll L(1, \chi_{D}) \sqrt{{D}} U^{-1/2} \exp(O((\log {D})^{1/3 + \varepsilon}))
\end{split}
\end{displaymath}
We may therefore treat all $\chi $ with $\log \mathcal{L}(\chi) > (\log {D})^{1/2}$ trivially and estimate their contribution by
\begin{equation*}%\label{triv1}
\sum_{\log \mathcal{L}(\chi) > (\log {D})^{1/2}}F\Big(\frac{\lambda_{\chi}}{Q_{\pi_1}Q_{\pi_2}}\Big) \ll  H_E \exp(- \textstyle\frac{1}{3}\sqrt{\log {D}}).
\end{equation*}
This is admissible for \eqref{primary}. 
%This accounts for the last term on the right-hand side of Proposition \ref{prop:key}.

By partial summation it now remains to estimate
\begin{displaymath}
\begin{split}
%\sum_{\chi \not \in \mathcal{T} ((\log D)^{1/2})} \mathcal{L}(\chi)^{1/2}& = 
  \int_{-\infty}^{(\log {D})^{1/2}}e^{V/2} \mathcal{T}(V) dV =  \exp(\mu_{D})  \int_{-\infty}^{(\log {D})^{1/2}- \mu_{ D}}e^{V/2} \mathcal{T}(V + \mu_{D}) dV.
\end{split}
\end{displaymath}
 
The contribution of $V\leq \varepsilon \log\log {D}$ can be estimated trivially by 
\begin{equation*}%\label{trivial}
\begin{split}
 \exp(\mu_{D}) &\int_{-\infty}^{\varepsilon \log\log {D}}e^{V/2} \mathcal{T}(V + \mu_{D}) dV \leq  \sum_{ \chi \in \widetilde{\rm Cl}_E^\vee}F\Big(\frac{\lambda_{\chi}}{Q_{\pi_1} Q_{\pi_2}}\Big)\exp(\mu_{D})\int_{-\infty}^{\varepsilon \log\log {D}}e^{V/2}  dV \\
&\ll H_E\exp\Big(\mu_{D} + \frac{\varepsilon}{2} \log\log {D}\Big) \ll   H_E\exp\big(\mu_{D} + O(\varepsilon){\rm var}_{D}\big),
\end{split}
\end{equation*}
where we used   \eqref{boundsigma}  in the last step. This is again admissible for \eqref{primary}, perhaps after redefining $\varepsilon$. 
   
The hardest part is to  estimate
\begin{equation}\label{int}
 \exp(\mu_{D})  \int_{\varepsilon \log\log V}^{(\log {D})^{1/2}- \mu_{ D}}e^{V/2}\mathcal{T}(V + \mu_{ D}) dV. 
\end{equation}
%We observe that %\footnote{The Dirichlet coefficients at primes of $ L(s, \text{Ad} f \times \eta_E)/L(s, \text{Ad}f)$ are $(\lambda_f(p)^2 - 1) (\eta_E(p) - 1) \leq 2$, so that by an extension of \eqref{titch1} to quotients of $L$-functions one can conclude $-\mu_{f, g, D}(D)  \leq c  \log\log D$ for some $c > 0$.}
%$-\mu_{f, g, D}(D) \leq c(\varepsilon)  (\log D)^{1/8+\varepsilon}$ by \eqref{titch2}-\eqref{add}, so that the upper limit in the integration satisfies
Recalling the definition of $\mu_D$ in \S\ref{sec:setting-up}, and applying \eqref{boundmu},  we have 
\begin{equation}\label{3star}
(\log {D})^{1/2}- \mu_{D} \asymp (\log {D})^{1/2}.
\end{equation}
Henceforth we restrict   $V$ to the interval 
\begin{equation}\label{V} 
   \varepsilon \log\log {D} \leq V \leq (\log {D})^{1/2}- \mu_{D}\ll (\log {D})^{1/2}.
   \end{equation}
   
   \subsection{Application of Corollary \ref{3line}} 
For   $V$ as in \eqref{V} we choose 
\begin{equation}\label{new-x}
   x = {D}^{A/\varepsilon V} = \exp\Big( \frac{A}{\varepsilon V} \log{D}\Big)
   \end{equation}
for a sufficiently large constant $A$, so that in particular $\log x \gg (\log D)^{1/2}$ and so $x \gg  (\log {D})^{5} \gg (\log Q_{\pi_j \times \chi})^{4+\varepsilon}$ for all $\chi$ in the support of $F^{\ast}(\lambda_{\chi}/Q_{\pi_1}Q_{\pi_2})$. We may now apply Corollary \ref{3line} for $j=1$ and $j=2$ in a similar way as in \eqref{cor7} to conclude that
\begin{equation}\label{sumprimes}
\begin{split}
\mathcal{L}(\chi) - \mu_{D} & \leq    \sum_{  p \leq x}  \frac{a_{\chi}(p)(\lambda_1(p) + \lambda_2(p))}{ p^{1/2 + 1/\log x}} \frac{\log(x/p)}{\log x} \\
&+\frac{1}{2}\sum_{\substack{\eta_E(p) = 1\\ p^2 \leq x}}  \frac{a_{\chi^2}(p)(\lambda_1(p^2) + \lambda_2(p^2) -\psi_1(p)- \psi_2(p))}{ p^{1 + 2/\log x}} \frac{\log(x/p^2)}{\log x} + O(\varepsilon V). 
\end{split}
\end{equation}
Here we used that $\mu_D(x) - \mu_D = - \frac{1}{2}(\log\log x + \log\log {D}) = \log V + O(1) \leq \frac{1}{10}\varepsilon V$, and also \eqref{V} and \eqref{new-x} to bound the remaining terms in Corollary \ref{3line} by $O(\varepsilon V)$. Hence  if $\chi$ is counted by  $\mathcal{T}(V+ \mu_{D})$ we have 
\begin{displaymath}
\begin{split}
(1-\varepsilon)V  &\leq    \sum_{  p \leq x}  \frac{a_{\chi}(p)(\lambda_1(p) + \lambda_2(p))}{ p^{1/2 + 1/\log x}} \frac{\log(x/p)}{\log x} \\
&+\frac{1}{2}\sum_{\substack{\eta_E(p) = 1\\ p^2 \leq x}}  \frac{a_{\chi^2}(p)(\lambda_1(p^2) + \lambda_2(p^2) -\psi_1(p)- \psi_2(p))}{ p^{1 + 2/\log x}} \frac{\log(x/p^2)}{\log x}. % = S_1(\chi) + S_2(\chi),  
\end{split}
\end{displaymath}
%(Note that  $\mu_{f, g, D} \geq \mu_{f, g, D}(x)$.) 
We write the right hand side as
$$\sum_{p \leq z} + \sum_{z < p \leq x} + \sum_{p^2 \leq x} = S_1(\chi) + S_2(\chi) + S_3(\chi),$$
say, for some $z \geq (\log {D})^{2+\varepsilon}$. We choose 
\begin{equation}\label{z}
   \Delta = (\log\log {D})^{1/3}, \quad z = x^{1/\Delta} = \exp\Big( \frac{A}{\varepsilon V} \frac{\log {D}}{\Delta}\Big),
\end{equation} 
which is clearly $ \geq (\log {D})^{2+\varepsilon}$  in view of \eqref{new-x} and \eqref{V}. We now estimate, using Lemma \ref{lemma43} as a crucial input, the quantities
\begin{displaymath}
M_1 = \sum_{S_1(\chi) \geq (1 - 4\varepsilon)V} F\Big(\frac{\lambda_{\chi}}{Q_{\pi_1} Q_{\pi_2}}\Big), \quad M_2 = \sum_{S_2(\chi) \geq  \varepsilon V} F\Big(\frac{\lambda_{\chi}}{Q_{\pi_1} Q_{\pi_2}}\Big), \quad M_3 = \sum_{S_3(\chi) \geq  \varepsilon V} F\Big(\frac{\lambda_{\chi}}{Q_{\pi_1} Q_{\pi_2}}\Big)
\end{displaymath}
%\begin{itemize}
%\item[--]
 %the number $N_1$ of $\chi$ such that $S_1(\chi) \geq (1-4\varepsilon) V$\item[--]
% the number $N_2$ of $\chi$ such that $S_2(\chi) \geq \varepsilon V$,
%\item[--] the number $N_3$ of $\chi$ such that $S_3(\chi) \geq  \varepsilon V$,
%\end{itemize}
so that $\mathcal{T}(V + \mu_{ D}) \leq M_1+M_2 + M_3$.

\subsection{Bounding $M_1, M_2, M_3$} %In the following two lemmas  we establish bounds for $N_1, N_2, N_3$. 
We decompose the interval \eqref{V} as $I_1\cup I_2$, where
\begin{displaymath}
\begin{split}
&I_1=\left[ \varepsilon \log\log {D} ,\frac{\varepsilon }{A}\Delta \cdot {\rm var}_{ D} \right],\quad I_2 =\left[ \frac{\varepsilon}{A}\Delta \cdot {\rm var}_{D}  , (\log {D})^{1/2}- \mu_{ D}\right],
\end{split}
\end{displaymath}
and we recall \eqref{3star} and \eqref{HD}. 
\begin{lemma}\label{n1lemma}
We have
\begin{equation*}%\label{n1}
M_1 \ll  \begin{cases}H_E \exp\Big( -\frac{((1 - 5\varepsilon)V)^2}{8 {\rm var}_{D}} \Big),& V\in I_1,\\ H_E \exp\Big(-c(\varepsilon) V\log \frac{V}{{\rm var}_{ D}}\Big), & V \in I_2 .\end{cases}
 \end{equation*}
 \end{lemma}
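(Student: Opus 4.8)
The plan is to bound $M_1$ by a high-moment estimate, exactly in the spirit of the proof of Lemma \ref{second}, but now choosing the moment order $k$ adaptively according to which subinterval $I_1$ or $I_2$ the parameter $V$ lies in. The short prime sum $S_1(\chi)$ involves $a_\chi(p)(\lambda_1(p)+\lambda_2(p))$ over split primes $p\leq z$; set $b(p)=\lambda_1(p)+\lambda_2(p)$. By positivity, for any $k\in\N$ with $z^{2k}<(D/4)$ we have
\[
M_1\leq \frac{1}{((1-4\varepsilon)V)^{2k}}\sum_{\chi\in\widetilde{\rm Cl}_E^\vee}F\Big(\frac{\lambda_\chi}{Q_{\pi_1}Q_{\pi_2}}\Big)\Big(\sum_{p\leq z}\frac{a_\chi(p)b(p)}{\sqrt{p}}\Big)^{2k}.
\]
Applying Lemma \ref{lemma43} with $\nu=1$ (so that the $F(\lambda_\chi/Q)$ weight is still an admissible test function, and the hypothesis $z^{2k}<(D/4)^{1/\nu}$ holds), together with \eqref{HD}, gives
\[
M_1\ll H_E\cdot Q_{\pi_1}Q_{\pi_2}\cdot\frac{(2k)!}{k!}\frac{1}{((1-4\varepsilon)V)^{2k}}\Big(\frac12\sum_{p\leq z}\frac{(1+\eta_E(p))b(p)^2}{p}\Big)^k.
\]
Now by \eqref{dirich} the inner sum over primes is $2\,{\rm var}_D(z)+O(\log\log\log N_1N_2)$, and using \eqref{boundsigma} together with the fact that $z=x^{1/\Delta}$ with $\log z\gg(\log D)^{1/2}/\Delta$ one checks $\sum_{p\leq z}(1+\eta_E(p))b(p)^2/p\leq (2+o(1))\,{\rm var}_D$ (replacing $z$ by $D$ only changes ${\rm var}$ by $O(\log\log\log D)$, absorbed into the $\varepsilon$'s). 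Applying \eqref{fac} in the form $(2k)!/k!\leq\sqrt2\,(4k/e)^k$, we arrive at
\[
M_1\ll H_E\Big(\frac{4k}{e}\cdot\frac{2\,{\rm var}_D}{((1-5\varepsilon)V)^2}\Big)^k
\]
after mild relabelling of $\varepsilon$.

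It remains to optimize over $k$ subject to the constraint $z^{2k}<D/4$, i.e.\ $2k\log z<\log D+O(1)$; recalling $\log z=\tfrac{A}{\varepsilon V}\tfrac{\log D}{\Delta}$ from \eqref{z}, this permits $k$ up to roughly $\tfrac{\varepsilon V\Delta}{2A}$. For $V\in I_1$ we have $V\leq\tfrac{\varepsilon}{A}\Delta\cdot{\rm var}_D$, so the essentially unconstrained optimum $k\approx \tfrac{((1-5\varepsilon)V)^2}{8\,{\rm var}_D}$ is $\leq \tfrac{\varepsilon V\Delta}{8A}\cdot(1-5\varepsilon)^2$, hence admissible; plugging this $k$ into the displayed bound yields
\[
M_1\ll H_E\exp\Big(-\frac{((1-5\varepsilon)V)^2}{8\,{\rm var}_D}\Big),
\]
which is the first case. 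For $V\in I_2$ the unconstrained optimum would violate $z^{2k}<D/4$, so we instead take $k$ at (a constant fraction of) its maximal permissible value, $k\asymp \varepsilon V\Delta/A\asymp V$ up to the factor $\Delta=(\log\log D)^{1/3}$; since in this range $V\gg\tfrac{\varepsilon}{A}\Delta\cdot{\rm var}_D$, the ratio $k\cdot{\rm var}_D/V^2$ is bounded by a small constant, so $\tfrac{4k}{e}\cdot\tfrac{2\,{\rm var}_D}{((1-5\varepsilon)V)^2}\leq V/({\rm var}_D)^{1+c'}$ — more simply, it is $\leq (c''\,{\rm var}_D/V)^{?}$ — and one obtains
\[
M_1\ll H_E\exp\Big(-c(\varepsilon)\,V\log\frac{V}{{\rm var}_D}\Big)
\]
for a suitable $c(\varepsilon)>0$, the logarithmic gain coming from the fact that $4k/e$ divided by $V^2/{\rm var}_D$ is of size a negative power of $V/{\rm var}_D$ when $k\asymp V$.

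The main obstacle — and the point requiring the most care — is the bookkeeping in the $I_2$ regime: here one cannot take the moment $k$ as large as the Gaussian heuristic wants, because of the hard cutoff $z^{2k}<D/4$ forced by Lemma \ref{lemma43}, so one must verify that truncating $k$ at its maximal allowed size $\asymp \varepsilon V\Delta/A$ still produces the claimed superexponential saving $\exp(-c(\varepsilon)V\log(V/{\rm var}_D))$. This hinges on the precise interplay between the choices of $x$ in \eqref{new-x}, of $\Delta$ and $z$ in \eqref{z}, and the lower bound $V\geq \tfrac{\varepsilon}{A}\Delta\cdot{\rm var}_D$ defining $I_2$; one needs $\Delta=(\log\log D)^{1/3}\to\infty$ precisely so that the permissible $k\asymp V\Delta$ beats $V^2/{\rm var}_D$ by a growing factor, delivering the extra $\log(V/{\rm var}_D)$. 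A secondary technical point is checking that replacing the prime sum over $p\leq z$ by the full ${\rm var}_D={\rm var}_D(D)$ costs only $O(\log\log\log D)$, which is harmless after the $\varepsilon$-relabelling; this is where \eqref{dirich} and \eqref{boundsigma} are invoked.
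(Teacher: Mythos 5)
Your overall strategy — apply Lemma \ref{lemma43} with $\nu=1$ to the $2k$-th moment of $S_1(\chi)$, identify the inner prime sum with $2\,{\rm var}_D(z)+O(\log\log\log D)$ via \eqref{dirich}, use \eqref{fac}, and then pick $k$ adaptively on $I_1$ versus $I_2$ — is exactly the route the paper takes, and your treatment of $I_1$ (Gaussian optimal $k\approx((1-5\varepsilon)V)^2/(8\,{\rm var}_D)$, admissible since it is below the cap coming from $z^{2k}<D/4$) is correct.

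However, there is a genuine error in the $I_2$ case. You propose taking $k$ at (a constant fraction of) its \emph{maximal} permissible value, $k\asymp \varepsilon V\Delta/A$, and your closing paragraph asserts this works because ``the permissible $k\asymp V\Delta$ beats $V^2/{\rm var}_D$ by a growing factor.'' That last assertion is false: in $I_2$ one has $V\geq\frac{\varepsilon}{A}\Delta\,{\rm var}_D$, hence $V^2/{\rm var}_D\geq\frac{\varepsilon}{A}V\Delta$, so $V^2/{\rm var}_D$ dominates $V\Delta$, not the other way around — which is the whole reason the constraint binds on $I_2$. Concretely, at the lower endpoint $V=\frac{\varepsilon}{A}\Delta\,{\rm var}_D$, the choice $k=\varepsilon V\Delta/(3A)$ makes $8k\,{\rm var}_D/(e((1-5\varepsilon)V)^2)=8/(3e(1-5\varepsilon)^2)$, a fixed constant that is greater than $1$ once $\varepsilon$ is not very small — so the bound $H_E(8k\,{\rm var}_D/(eV^2))^k$ \emph{blows up} rather than decays — and even when it is less than $1$, the resulting exponent is of size $V\Delta$, which is smaller than the target $V\log(V/{\rm var}_D)$ near the top of $I_2$ where $\log(V/{\rm var}_D)$ can be as large as $\log\log D\gg\Delta=(\log\log D)^{1/3}$. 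The correct choice on $I_2$ is $k=[V]$ (comfortably inside the allowed range since $\Delta\to\infty$), whence $8k\,{\rm var}_D/(e((1-5\varepsilon)V)^2)\asymp{\rm var}_D/V\leq A/(\varepsilon\Delta)\to0$, and the saving $\exp(-c(\varepsilon)V\log(V/{\rm var}_D))$ comes from the smallness of this base raised to the power $V$, \emph{not} from pushing $k$ to the boundary. Your intermediate expressions (``$\leq V/({\rm var}_D)^{1+c'}$'', ``$\leq(c''\,{\rm var}_D/V)^?$'') reflect this confusion: the correct base with $k\asymp V$ is $\asymp{\rm var}_D/V$, not $V/{\rm var}_D^{1+c'}$.
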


\begin{proof}
By Lemma \ref{lemma43} (with $\nu=1$) we have
\[
M_1 \leq  \frac{2k!}{k!} \frac{1}{((1-4\varepsilon) V)^{2k}} \Big( \frac{1}{2}\sum_{ p \leq z} \frac{(\lambda_1(p) + \lambda_2(p))^2(1 + \eta_E(p)) }{p} \Big)^k \sum_{ \chi \in \widetilde{\rm Cl}_E^\vee}F\Big(\frac{\lambda_{\chi}}{Q_{\pi_1} Q_{\pi_2}}\Big) ,
\]
provided that $z^{2k} < {D}/4$. %The $k$-th power can be rewritten as
%\[
%\Big(\frac{1}{2} \sum_{ p \leq z }  \frac{(\psi_1(p) + \psi_2(p)+ \lambda_1(p^2) + \lambda_2(p^2) + 2\lambda_1(p)\lambda_2(p))(1 + \eta_E(p)) }{p} \Big)^k.
%\]
%By \eqref{dirich},   \eqref{log-series} (note that $z \geq (\log {D})^{2+\varepsilon}$) and \eqref{defsigma}, the parenthesis equals
%$$2{\rm var}_{D}(x)+ O \Big(1+ \sum_{p \mid N_1N_2} \frac{1}{p}\Big) = 2{\rm var}_{D}(x) + O(\log\log\log {D})).$$
  Using \eqref{fac}, \eqref{dirich} and \eqref{boundsigma} along with the obvious fact  ${\rm var}_{ D}(z)  \leq {\rm var}_{D}$, we conclude
\begin{equation}\label{n1}
M_1 \ll H_E\Big( \frac{8k ({\rm var}_{ D}(z) + O(\log\log\log {D})) }{e((1-4\varepsilon)V)^2 }\Big)^k \ll H_E\Big( \frac{8k\, {\rm var}_{ D}}{e((1-5\varepsilon)V)^2 }\Big)^k.
\end{equation}
Our choice of   $z$ in \eqref{z} allows us to take
\begin{equation}\label{k}
k \leq  \frac{\varepsilon V \Delta}{3A}.
\end{equation}
  We now choose  
\begin{equation}\label{choose-k}
 k = \begin{cases}[ \frac{((1 - 5\varepsilon)V)^2}{8 {\rm var}_{D}}] , & V \in I_1, \\ [V], & V \in I_2 \end{cases}
\end{equation}
in agreement with \eqref{k} which completes the proof of the lemma. 
\end{proof}

%\subsection{Bounding $N_2$ and $N_3$}\label{n2n3}
The following bounds are  similar, but easier. % and contribute less than we have already recorded in \eqref{trivial}. 
\begin{lemma}\label{n2n3lemma}
There is $c(\varepsilon) > 0$ such that
\begin{equation}\label{N-bounds}
M_2,M_3\ll H_E  \exp(-c(\varepsilon) V\log V)
\end{equation}
for $V \in I_1 \cup I_2$. 
\end{lemma}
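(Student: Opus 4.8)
The plan is to bound $M_2$ and $M_3$ by essentially the same moment method used in Lemma \ref{n1lemma}, applied to the tail sums $S_2$ and $S_3$, the point being that these tail sums have much smaller variance than $S_1$, so they can never be responsible for the threshold $\varepsilon V$ unless a large-deviations event occurs. First I would treat $M_2$. By positivity, for any $k \in \Bbb{N}$ with $x^{2k} < ({D}/4)$ we have
\[
M_2 \leq \frac{1}{(\varepsilon V)^{2k}} \sum_{\chi \in \widetilde{\rm Cl}_E^\vee} F\Big(\frac{\lambda_{\chi}}{Q_{\pi_1}Q_{\pi_2}}\Big) \Big( \sum_{z < p \leq x} \frac{a_{\chi}(p)(\lambda_1(p) + \lambda_2(p))}{p^{1/2 + 1/\log x}}\frac{\log(x/p)}{\log x} \Big)^{2k},
\]
and Lemma \ref{lemma43} (with $\nu = 1$, and $b(p)$ the truncated coefficient supported on $z < p \leq x$) bounds the $\chi$-average by
\[
\frac{(2k)!}{k!}\Big( \frac12 \sum_{z < p \leq x} \frac{(\lambda_1(p)+\lambda_2(p))^2(1+\eta_E(p))}{p}\Big)^k \sum_{\chi} F\Big(\frac{\lambda_{\chi}}{Q_{\pi_1}Q_{\pi_2}}\Big).
\]
By \eqref{dirich} the inner Dirichlet sum over $z < p \leq x$ is $2({\rm var}_D(x) - {\rm var}_D(z)) + O(\log\log\log N_1N_2) \ll \log(\log x/\log z) + \log\log\log{D} \ll \log \Delta + \log\log\log{D} \ll \log\log\log {D}$ in view of the choice $z = x^{1/\Delta}$ in \eqref{z} and $\Delta = (\log\log{D})^{1/3}$. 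Using \eqref{fac} and \eqref{HD}, this gives $M_2 \ll H_E (Ck\log\log\log{D}/(\varepsilon V)^2)^k$ for an absolute $C$, and the admissible range $z^{2k} < {D}/4$ allows $k \asymp \varepsilon V\Delta/A$ as in \eqref{k}. Choosing $k = [\varepsilon V\Delta / (3A)]$ (which dominates any needed multiple of $V$ since $\Delta \to \infty$) makes $k\log\log\log{D}/(\varepsilon V)^2 = O(\Delta \log\log\log{D}/(\varepsilon V)) \leq 1/2$ on the range \eqref{V}, whence $M_2 \ll H_E 2^{-k} \ll H_E\exp(-c(\varepsilon)V\log V)$, since $k \gg V\log V / \log V \gg V$ in fact $k \gg V\Delta$ which is more than enough.

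Next I would treat $M_3$, the contribution of the degree-two (square of $\chi$) Dirichlet polynomial $S_3(\chi) = \frac12 \sum_{\eta_E(p)=1,\, p^2 \leq x} a_{\chi^2}(p)(\lambda_1(p^2)+\lambda_2(p^2) - \psi_1(p) - \psi_2(p))/p^{1+2/\log x} \cdot \log(x/p^2)/\log x$. Again by positivity, for $k$ with $x^{2k} < ({D}/4)^{1/2}$ (so that Lemma \ref{lemma43} with $\nu = 2$ applies, noting that $S_3$ is supported on split primes as required when $\nu$ is even),
\[
M_3 \leq \frac{1}{(\varepsilon V)^{2k}} \sum_{\chi} F\Big(\frac{\lambda_{\chi}}{Q_{\pi_1}Q_{\pi_2}}\Big)\Big( \tfrac12\sum_{\substack{\eta_E(p)=1\\ p \leq \sqrt x}} \frac{a_{\chi^2}(p)(\lambda_1(p^2)+\lambda_2(p^2) - \psi_1(p)-\psi_2(p))}{p}\Big)^{2k}
\]
up to harmless factors from the $p^{2/\log x}$ and $\log(x/p^2)/\log x$ weights. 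Lemma \ref{lemma43} (with $\nu=2$, $b(p) = \frac12(\lambda_1(p^2)+\lambda_2(p^2)-\psi_1(p)-\psi_2(p))$) bounds the $\chi$-average by $\frac{(2k)!}{k!}(\frac12\sum_{p\leq\sqrt x}(1+\eta_E(p))b(p)^2/p)^k \sum_\chi F(\cdots)$, and by \eqref{add} together with $|\lambda_j(p^2)| = |\lambda_j(p)^2 - \psi_j(p)| \leq \lambda_j(p)^2 + 1$ one has $\sum_{p\leq\sqrt x}b(p)^2/p \ll \sum_{p\leq\sqrt x}(\lambda_1(p)^4 + \lambda_2(p)^4 + 1)/p \ll \mathscr{C}$ (using Deligne–Ramanujan $|\lambda_j(p)|\leq 2$ to pass from $\lambda_j(p)^4$ to $\lambda_j(p)^2$, or citing \eqref{add} directly for the relevant sums). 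Thus $M_3 \ll H_E(Ck\mathscr{C}/(\varepsilon V)^2)^k$, and since $\mathscr{C} \ll \log\log{D} + (\log Q_\pi)^{1/3} \ll {\rm var}_D$, the same choice of $k$ as for $M_2$ — namely $k$ a large constant multiple of $V$, which is permitted by \eqref{k} — gives $k\mathscr{C}/(\varepsilon V)^2 \ll \mathscr{C}/(\varepsilon V) \leq 1/2$ on the range $V \geq \varepsilon\log\log{D}$, hence $M_3 \ll H_E 2^{-k} \ll H_E\exp(-c(\varepsilon)V\log V)$. The extra factor $\log(x/p^2)/\log x \leq 1$ and $p^{2/\log x} \geq 1$ only help, and the truncation $p \leq \sqrt x$ rather than $p \leq x$ is absorbed without difficulty.

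The main obstacle, such as it is, is purely bookkeeping: one must check that the choices of $k$ are simultaneously (i) admissible for the relevant case of Lemma \ref{lemma43} — i.e.\ $z^{2k} < {D}/4$ for $M_2$ and $x^{2k} < ({D}/4)^{1/2}$ for $M_3$, both of which follow from \eqref{z}, \eqref{new-x}, \eqref{V} since $\log z \gg \log{D}/(V\Delta)$ and $\log x \gg \log{D}/V$ — and (ii) large enough, namely $k \gg_\varepsilon V$ with a constant tending to infinity with $\Delta$ for $M_2$, so that the resulting bound is of the stated form $\exp(-c(\varepsilon)V\log V)$ uniformly over $V \in I_1 \cup I_2$. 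The key structural reason the lemma is easier than Lemma \ref{n1lemma} is that there is no competition between the threshold and the variance: the Dirichlet sums governing $S_2$ and $S_3$ have variance $O(\log\log\log{D})$ and $O(\mathscr{C})$ respectively, both of which are negligible against $(\varepsilon V)^2$ once $V \geq \varepsilon\log\log{D}$, so the large-deviations exponent is dominated by the factorial $(2k)!/k! \asymp (4k/e)^k$ and we may simply take $k$ proportional to $V$ (indeed, as large as \eqref{k} permits) to obtain super-exponential decay in $V$. This is exactly why the lemma is stated with the single uniform bound $\exp(-c(\varepsilon)V\log V)$ rather than the piecewise bound of Lemma \ref{n1lemma}.
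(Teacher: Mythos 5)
Your treatment of $M_2$ has the right idea — reduce to the variance $\sum_{z<p\leq x}(\lambda_1(p)+\lambda_2(p))^2/p$ via Lemma~\ref{lemma43}, estimate it by $O(\log\log\log D)$ using the Hecke relations and the truncation $z=x^{1/\Delta}$ — but your claim that the admissible range for $k$ is governed by $z^{2k}<D/4$ (allowing $k\asymp V\Delta$) is incorrect. In the proof of Lemma~\ref{lemma43}, the $2k$-th moment is expanded as a sum over integers $n$ composed of $2k$ primes from the support of $b(p)$; even though those primes lie in $(z,x]$, such $n$ can be as large as $x^{2k}$, so Lemma~\ref{orth} requires $x^{2k}<D/4$ exactly as in the $M_1$ bound (and as the paper states). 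This forces $k\leq\varepsilon V/(2A)$, not $k\asymp V\Delta$. The good news is that the conclusion survives: with $k\asymp_\varepsilon V$ and variance $\ll\log\log\log D = o(V)$ on the range~\eqref{V}, one still has $M_2\ll H_E\exp(-c(\varepsilon)V\log V)$; your specific intermediate inequality $k\mathscr{C}/(\varepsilon V)^2\leq 1/2$ followed by $2^{-k}$ with $k\gg V\Delta$ is not what delivers the $V\log V$, and would not be available anyway.

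The $M_3$ argument has a more serious gap. You apply Lemma~\ref{lemma43} (with $\nu=2$) taking $b(p)=\tfrac12(\lambda_1(p^2)+\lambda_2(p^2)-\psi_1(p)-\psi_2(p))$, but the Dirichlet polynomial $S_3(\chi)$ has the factor $a_{\chi^2}(p)/p^{1+2/\log x}$, not $a_{\chi^2}(p)/p^{1/2}$. To match the lemma's normalization $\sum a_{\chi^\nu}(p)b(p)/\sqrt{p}$, the coefficient $b(p)$ must absorb an extra $p^{-1/2-2/\log x}$; the resulting variance is
\[
\frac12\sum_{p\leq\sqrt{x}}\frac{(1+\eta_E(p))\,b(p)^2}{p}\ \ll\ \sum_{p\leq\sqrt{x}}\frac{(\lambda_1(p^2)+\lambda_2(p^2)-\psi_1(p)-\psi_2(p))^2}{4p^2}\ \ll\ 1,
\]
a convergent (absolute) constant, \emph{not} $\ll\mathscr{C}$. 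This matters: with your $\ll\mathscr{C}$ bound the best you can extract, after the admissible constraint $x^k<(D/4)^{1/2}$ (i.e.\ $k\leq\varepsilon V/(2A)$) is imposed, is $M_3\ll H_E\exp(-c(\varepsilon)V)$, where $c(\varepsilon)$ is small (in particular less than $1/2$). That is not enough for the application in Section~\ref{compl}: the integral $\int_{I_1\cup I_2}e^{V/2}M_3\,dV$ then diverges exponentially at the top of $I_2$. With the correct variance $O(1)$, the base $ck/(\varepsilon V)^2\asymp 1/(\varepsilon V)$ decays with $V$, so raising to the power $k\asymp_\varepsilon V$ genuinely produces the $\exp(-c(\varepsilon)V\log V)$ factor. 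This is the key mechanism your version misses.
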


\begin{proof}
By Lemma \ref{lemma43} (with $\nu=2$ noting that the sum contains only split primes) we obtain  
$$M_3 \leq \frac{2k!}{k!} \frac{1}{(\varepsilon V)^{2k}} \Big( \sum_{p \leq \sqrt{x}} \frac{(\lambda_1(p^2) + \lambda_2(p^2) - \psi_1(p) - \psi_2(p))^2}{4p^2} \Big)^k \sum_{ \chi \in \widetilde{\rm Cl}_E^\vee}F\Big(\frac{\lambda_{\chi}}{Q_{\pi_1} Q_{\pi_2}}\Big),$$
whenever $x^{k}< ({D}/4)^{1/2}$. Thus
$$M_3 \ll  H_E \Big(\frac{c k}{(\varepsilon V)^2}\Big)^k $$
for some constant $c> 0$. Our choice  \eqref{new-x} implies that we can choose $k = [\frac{\varepsilon}{2A} V]$, yielding the stated bound for $M_3$ in \eqref{N-bounds}.

Next, by Lemma \ref{lemma43} (with $\nu=1$), the Hecke relations \eqref{mult}, and \eqref{log-series} we have (using $1 + \eta_E(p) \leq 2$ and $(r+s)^2 \leq 2(r^2 + s^2)$ for $r, s \in \Bbb{R}$) 
\begin{equation}\label{n2}
\begin{split}
M_2 &\ll H_E \frac{(2k)!}{k!} \frac{1}{(\varepsilon V)^{2k}} \Big( \sum_{z < p \leq x} \frac{(\lambda_1(p)+ \lambda_2(p))^2}{p} \Big)^k \\
&\leq  H_E \frac{(2k)!}{k!(\varepsilon V)^{2k}}\Big( \sum_{z < p \leq x} \frac{2(2 + \lambda_1(p^2) + \lambda_2(p^2))}{p} \Big)^k =  H_E  \frac{(2k)!}{k!(\varepsilon V)^{2k} }  \Big(4 \log\frac{\log x}{\log z} + O(1)\Big)^k,
\end{split}
\end{equation}
provided that $x^{2k} \leq {D}/4$ and $z \geq (\log D)^{2 + \varepsilon}$. We choose $k = [\varepsilon V/(2A)]$ and recall \eqref{fac} and \eqref{z} getting
\[
M_2 \ll H_E \Big( \frac{16 k}{e \varepsilon^2V^2} (\log\log \log {D} + O(1))\Big)^k.
\]
By \eqref{V} this is at most $H_E  \exp(-c(\varepsilon) V\log V)$ for some $c(\varepsilon) > 0$, again confirming \eqref{N-bounds}.
\end{proof}

\subsection{Completion of the proof of \eqref{primary}}\label{compl}

%We are now ready to complete the proof of \eqref{primary}. 
 We substitute the bounds of the previous two lemmas back into \eqref{int}. We start with the contribution of $M_1$. 
 The interval $I_2$ contributes 
\begin{equation}\label{I2}
\begin{split}
 H_E\exp(\mu_{D}) \int_{I_2} &\exp\Big( \frac{1}{2} V -c(\varepsilon) V\log \frac{2V}{{\rm var}_{ D}}\Big)dV \\
\ll&  H_E\exp(\mu_{D}) \int_{I_2} \exp\big(-c(\varepsilon) V \log\log\log {D}\big) dV\ll  H_E\exp(\mu_{D}),
\end{split}
\end{equation}
which is  admissible for \eqref{primary}. The contribution of $I_1$ is
\[
H_E\exp(\mu_{D}) \int_{I_1} \exp\Big( \frac{1}{2}V  -\frac{((1 - 5\varepsilon)V)^2}{8{\rm var}_{ D}}\Big) dV.
\]
We extend the range of integration to all of $\Bbb{R}$ and use the formula
$$\int_{\Bbb{R}} e^{-\alpha x^2 + \beta x} dx = \sqrt{\frac{\pi}{\alpha}} \exp\Big(\frac{\beta^2}{4\alpha}\Big), \quad \alpha, \beta > 0,$$
getting the upper bound
\begin{equation*}%\label{bound3}
\begin{split}
&H_E\exp(\mu_{D}) \left(\frac{{\rm var}_{D}}{2}\right)^{1/2}\exp\Big(\frac{{\rm var}_{ D}}{2(1 - 5\varepsilon)^2}\Big)\ll H_E\exp\Big(\mu_{D} + \Big(\frac{1}{2} + O(\varepsilon)\Big) {\rm var}_{ D}\Big)
\end{split}
\end{equation*}
in agreement with \eqref{primary}, potentially after re-defining $\varepsilon$. 

For $M_2, M_3$   we obtain the same contribution as in \eqref{I2}, which  completes the proof of \eqref{primary}. 

%in agreement with Proposition \ref{prop:key}. 

\subsection{The endgame}\label{end} We have now prepared the scene to complete the proof of Theorem \ref{thm3}(a). Recall that we need to establish the bound
\[
\frac{1}{H_E} \sum_{\chi \in \widetilde{\rm Cl}^\vee_E} \exp\Big(-\frac{c_0|\lambda_{\chi}|}{Q_{\pi_1} Q_{\pi_2}}\Big)\Big(\frac{L(1/2, \pi_1 \times \chi)L(1/2, \pi_2 \times \chi)}{L(1,\text{{\rm Ad }} \pi_1)L(1,\text{{\rm Ad }} \pi_2) }\Big)^{1/2} \ll (\log {D})^{-1/4 + \varepsilon},
\]
with polynomial dependence on $Q_{\pi_1}, Q_{\pi_2}$.

 By \eqref{titch3}, \eqref{HD} and  the convexity bound, the left hand side is trivially (and crudely) bounded by $({D} Q_{\pi_1} Q_{\pi_2})^{10} \ll (Q_{\pi_1} Q_{\pi_2})^{10^3} (\log {D})^{-1/4}$  if $\max(Q_{\pi_1}, Q_{\pi_2}) \geq {D}^{1/10}$. So from now on we can assume 
 \begin{equation}\label{assume}
    Q_{\pi_1}, Q_{\pi_2} \leq {D}^{1/10}.
    \end{equation} 
 
 Next we majorize $x \mapsto \exp(-c_0|x|)$ by a non-negative, even Schwartz class function $F$ whose Fourier transform has support in $[-\frac{1}{2\pi}, \frac{1}{2\pi}]$. % (see \cite[Lemma 2]{BM} for an explicit construction of such a function). 
Recalling \eqref{HD}, \eqref{defmu} and \eqref{defsigma} and using  \eqref{titch3} for the error term, the   bound \eqref{primary} yields
\begin{equation*}%\label{almostdone}
\begin{split}
&\frac{1}{H_E}\sum_{\chi \in \widetilde{\rm Cl}_E^\vee}F\Big(\frac{\lambda_{\chi}}{Q_{\pi_1} Q_{\pi_2}}\Big) \Big( \frac{L(1/2, \pi_1 \times \chi)L(1/2, \pi_2\times \chi)}{L(1, \text{Ad}\, \pi_1) L(1, \text{Ad}\, \pi_1) }\Big)^{1/2}\\
& \ll \frac{L(1, \eta_E)^{\frac{3}{4} + \frac{\varepsilon}{2}} L(1, \pi_1\times \pi_2 \times \theta_E)^{\frac{1}{4} + \frac{\varepsilon}{2}}  }{ L(1, \text{Ad}\, \pi_1 \times \theta_E)^{\frac{1}{8} - \frac{\varepsilon}{4}}L(1, \text{Ad}\, \pi_1 \times \theta_E)^{\frac{1}{8}- \frac{\varepsilon}{4} }(\log {D})^{\frac{1}{4} -\frac{\varepsilon}{2}}}  + \exp(- \textstyle\frac{1}{4}\sqrt{\log {D}})
\end{split}
\end{equation*}
whenever $Q_{\pi_1}, Q_{\pi_2} \leq {D}^{1/10}$.  By \eqref{dirich2} and \eqref{titch2}, the right hand side is
\begin{equation}\label{RHS}
\ll \frac{L(1, \pi_1 \times \pi_2 \times \theta_E)^{\varepsilon}}{(\log {D})^{1/4 - \varepsilon}} + \exp(- \textstyle\frac{1}{4}\sqrt{\log {D}}).
\end{equation}
%We now distinguish two cases. If the Ramanujan conjecture holds for $\pi_1, \pi_2$, then by \eqref{titch1} the numerator of the preceding display is   $O(\log\log {D})$. 
Let us temporarily make the additional assumption
\begin{equation}\label{temp}
Q_{\pi_1}, Q_{\pi_2} \leq (\log {D})^{10}.
\end{equation}
In this case, \eqref{log-series} and \eqref{add} with $x = (\log \log D)^3$, say,  imply the existence of a constant $C$ such that 
$$L(1, \text{Ad }\pi_j) \ll \exp(C(\log\log {D})^{1/3}) \ll (\log {D})^{\varepsilon}.$$
Together with \eqref{dirich3} we obtain
$$L(1, \pi_1 \times \pi_2 \times \theta_E) = L(1, \text{Ad }\pi_1)L(1, \text{Ad }\pi_2) \frac{L(1, \pi_1 \times \pi_2 \times \theta_E)}{L(1, \text{Ad }\pi_1)L(1, \text{Ad }\pi_2)} \ll (\log {D})^{\varepsilon}.$$
which when inserted in \eqref{RHS} is admissible for Theorem \ref{thm3}a.  

Let us now assume that \eqref{temp} fails, but \eqref{assume} holds, so that $  (\log {D})^{10} \leq \max(Q_{\pi_1}, Q_{\pi_2}) \leq {D}^{1/10}.$ 
 Then we use the Cauchy-Schwarz inequality and  Lemma \ref{second} together with \eqref{curlyC} and \eqref{titch3}  to obtain
\begin{equation*}%\label{almostdone}
\begin{split}
&\frac{1}{H_E}\sum_{\chi \in \widetilde{\rm Cl}_E^\vee}F\Big(\frac{\lambda_{\chi}}{Q_{\pi_1} Q_{\pi_2}}\Big) \Big( \frac{L(1/2, \pi_1 \times \chi)L(1/2, \pi_2\times \chi)}{L(1, \text{Ad}\, \pi_1) L(1, \text{Ad}\, \pi_1) }\Big)^{1/2}\\
& \ll Q_{\pi_1} Q_{\pi_2} (\log\log {D})  \exp \big(  \max_j \log Q_{\pi_j}^{2/5}\big) \ll (Q_{\pi_1} Q_{\pi_2} )^2 (\log {D})^{-1/4} 
\end{split}
\end{equation*}
as desired. This completes the proof of Theorem \ref{thm3}(a) in all cases.

\subsection{Proof of Theorem \ref{thm3}(b)}\label{77} This is similar and we highlight only the relevant changes. We keep the definition \eqref{defmu} of $\mu_D(x)$, but we redefine \eqref{defsigma} as follows:
\begin{equation*}%\label{defsigma}
\begin{split}
  {\rm var}^{\ast}_{ D}(x) =&  \frac{1}{2} \log\log x + \frac{1}{2}\log L(1, \eta_E) + \frac{1}{4} \log L(1, \text{Ad} \pi_1 \times \theta_E) +  \frac{1}{4} \log L(1, \text{Ad} \pi_2 \times \theta_E).    \end{split}
\end{equation*}

\begin{remark} This differs from ${\rm var}_D(x)$ by the term  $\frac{1}{2}\log L(1, \pi_1 \times \pi_2 \times \theta_E)$. The reason for this can be traced back to a comparison of Lemma \ref{lemma43} and Lemma \ref{lemma12}. For $\alpha \not= \beta$ we have $(B_{\alpha, \beta}(2, 0), B_{\alpha, \beta}(2, 1), B_{\alpha, \beta}(2, 2)) = (2, 0, 2)$ whereas for $\alpha= \beta$ this is $(2, 4, 2)$. Consequently, Lemma \ref{lemma12} features a ``main term'' $b(p)^2 + c(p)^2$, whereas the analogous situation in Lemma \ref{lemma43} gives $(b(p) + c(p))^2 = b(p)^2 + 2b(p) c(p) + c(p)^2$. It is the extra mixed term $2 b(p)c(p)$ that is responsible for the term $\frac{1}{2}\log L(1, \pi_1 \times \pi_2 \times \theta_E)$, which of course only makes sense in the situation $\pi_1 \not= \pi_2$ of Theorem \ref{thm1}, but not in the potentially allowed situation $\pi_1 = \pi_2$ of Theorem \ref{thm2}. 
\end{remark}

We write $\mu_D = \mu_D({D})$ and $\text{var}^{\ast}_D = \text{var}^{\ast}_D({D})$. The analogues of \eqref{boundsigma} and \eqref{dirich} are
\begin{equation*}%\label{boundsigma}
\frac12 \log\log {D} + O(\log\log\log {D} )  \leq {\rm var}^{\ast}_{  D}  \ll (\log {D})^{1/3}
\end{equation*}
and\begin{equation*}%\label{dirich}
\begin{split}
\sum_{p \leq x} \frac{  (1 + \eta_E(p))(\lambda_1(p)^2 +   \lambda_2(p)^2 )}{2p}  = 2 \text{var}^{\ast}_D(x) + O(\log\log\log M_1M_2)
\end{split}
\end{equation*}
provided that $x  > (\log Q_{\pi_1} Q_{\pi_2} D)^{2+\varepsilon}$. We generalize \eqref{defL} to 
$$\mathcal{L}^{\ast}(\chi)=L(1/2, f\times \chi^{\alpha}) L(1/2, g\times \chi^{\beta}).$$
We   follow the argument up to \eqref{sumprimes} which now reads
\begin{equation*}%\label{sumprimes}
\begin{split}
\mathcal{L}^{\ast}(\chi) - \mu_{D} &  \leq S_1^{\ast}(\chi) + S_2^{\ast}(\chi) + S_3^{\ast}(\chi)+ O(\varepsilon V),
\end{split}
\end{equation*}
where
\begin{displaymath}
\begin{split}
&S_1^{\ast}(\chi) = \sum_{  p \leq z}  \frac{a_{\chi^{\alpha}}(p)\lambda_1(p) + a_{\chi^{\beta}}(p)\lambda_2(p)}{ p^{1/2 + 1/\log x}} \frac{\log(x/p)}{\log x} ,\\
&S_2^{\ast}(\chi) = \sum_{ z <  p \leq x}  \frac{a_{\chi^{\alpha}}(p)\lambda_1(p) + a_{\chi^{\beta}}(p)\lambda_2(p)}{ p^{1/2 + 1/\log x}} \frac{\log(x/p)}{\log x} ,\\
& S_3^{\ast}(\chi) = \frac{1}{2}\sum_{\substack{\eta_E(p) = 1\\ p^2 \leq x}}  \frac{a_{\chi^{2\alpha}}(p)(\lambda_1(p^2)  - \psi_1(p))+ a_{\chi^{2\beta}}(p)( \lambda_2(p^2)- \psi_2(p))}{ p^{1 + 2/\log x}} \frac{\log(x/p^2)}{\log x}. 
\end{split}
\end{displaymath}
Correspondingly we define $M^{\ast}_j$ for $j = 1, 2, 3$. In order to bound $M^{\ast}_j$ we apply Lemma \ref{lemma12} instead of Lemma \ref{lemma43}. For notational simplicity we study the case $v_2(\alpha) = v_2(\beta)$, the other case being almost identical. In the following let $c > 0$ denote a sufficiently large constant,  \emph{not necessarily the same at every occurrence.}

We have
$$M_3^{\ast} \ll H_E \sum_{2\ell_1 + 4\ell_2 = 2k} \frac{(2k)!}{(\ell_1)! (\ell_2)!}  \frac{1}{(\varepsilon V)^{2k}} c^k$$
if $x^k < ({D}/4)^{1/\max(2\alpha, 2\beta)}$.  Since $(\ell_1)! (\ell_2)! \geq [k/3]!, $
we obtain by Stirling's formula
$$M_3^{\ast} \ll H_E \Big( \frac{ck^{5/3}}{(\varepsilon V)^2}\Big)^{k}.$$
 Choosing $k \asymp V$, we obtain the analogue of \eqref{N-bounds} for $M_3^{\ast}$.  In the same way, we obtain
$$M_2^{\ast} \ll H_E \sum_{2\ell_1 + 4\ell_2 = 2k} \frac{(2k)!}{(\ell_1)! (\ell_2)!}  \frac{1}{(\varepsilon V)^{2k}} \Big( 2 \log\frac{\log x}{\log z} + O(1)\Big)^{\ell_1} c^{\ell_2}$$
as an analogue of \eqref{n2},  and it is easy to confirm \eqref{N-bounds} also for $M_2^{\ast}$.

The estimation of $M_1^{\ast}$ is only slightly more difficult. As in \eqref{n1} we obtain
$$M_1^{\ast}  \ll H_E \sum_{2\ell_1 + 4\ell_2 = 2k} \frac{(2k)!}{(\ell_1)! (\ell_2)!}  \frac{(2 \text{var}^{\ast}_D  )^{\ell_1} c^{\ell_2}}{((1-5\varepsilon) V)^{2k}}.$$
We write $\ell_1 = a k$, $\ell_2 = b k$ with $a + 2b = 1$, so that 
by Stirling's formula we have  (with the convention $0^0 = 1$)
$$\frac{(2k)!}{(\ell_1)! (\ell_2)!} \ll \Big( \frac{4 k^{2-a-b}}{e^{2-a-b} a^{a} b^{b}}\Big)^k,$$
uniformly in $a, b$ (one can take $2$ as an implied constant).  We conclude
$$M_1^{\ast}  \ll H_E \sum_{2\ell_1 + 4\ell_2 = 2k}  c^{\ell_2} \Big( \frac{4 k^{2-a-b}(2 \text{var}^{\ast}_D  )^{a}}{e^{2-a-b} a^{a} b^b ((1 - 5\varepsilon)V)^{2}}\Big)^k .$$
Note that $1/2 \leq \xi^\xi \leq 1$ for $0 \leq \xi \leq 1$. 
We make the same choice for $k$ as in \eqref{choose-k}.

 If $V \in I_2$, then with $k = [V]$ we obtain
$$M_1^{\ast} \ll H_E  \sum_{2\ell_1 + 4\ell_2 = 2k}   \frac{(\text{var}^{\ast}_D)^{\ell_1}}{V^{\ell_1+\ell_2}}c^k \leq    H_E  \sum_{2\ell_1 + 4\ell_2 = 2k}   \frac{(\text{var}^{\ast}_D)^{\ell_1+\ell_2}}{V^{\ell_1+\ell_2}}c^k \ll H_E \exp\Big(-c(\varepsilon) V\log \frac{V}{{\rm var}^{\ast}_{ D}}\Big)$$
since $k/2 \leq \ell_1 + \ell_2 \leq k$. 

If $V \in I_1$ then with $k = [((1 - 5\varepsilon)V)^2/(8\text{var}^{\ast}_D)]$ and $b = (1 - a)/2$ we obtain
$$M_1^{\ast} \ll H_E \sum_{2\ell_1 + 4\ell_2 = 2k}    \Big( \frac{1}{ea^{a}b^b}\Big( \frac{cV}{    (\text{var}_D^{\ast})^{3/2}} \Big)^{1-a}\Big)^k \ll H_E \sum_{2\ell_1 + 4\ell_2 = 2k}   (e a^{a}b^b)^{-k}  (\log\log {D})^{-\frac{1}{6}\cdot 2\ell_2} $$
as $cV (\text{var}_D^{\ast})^{-3/2} \ll   \Delta  (\text{var}_D^{\ast})^{-1/2} \ll  (\log\log {D})^{-1/6}$ for $V \in I_1$. Altogether we conclude
 $$M_1^{\ast} \ll H_E \exp\Big( - \frac{((1 - 5\varepsilon)V)^2}{8\text{var}_D^{\ast}}\Big)$$
% as in Lemma \ref{n1lemma}. 
for $V \in I_1$. 
Having recovered the bounds from Lemmas \ref{n1lemma} and \ref{n2n3lemma}, the analogue of the basic bound \eqref{primary} is now
\begin{equation*}%\label{primary}
 \sum_{\chi \in \widetilde{\rm Cl}_E^\vee}F\Big(\frac{\lambda_{\chi}}{Q_{\pi_1} Q_{\pi_2}}\Big)\mathcal{L}^{\ast}(\chi)^{1/2} \ll H_E\Big(\exp\Big(\mu_{D} + \Big(\frac{1}{2} +  \varepsilon\Big) {\rm var}^{\ast}_{ D}\Big) + \exp(- \textstyle\frac{1}{3}\sqrt{\log {D}})\Big),
 \end{equation*}
 so that finally
 \begin{equation*}%\label{almostdone}
\begin{split}
&\frac{1}{H_E}\sum_{\chi \in \widetilde{\rm Cl}_E^\vee}F\Big(\frac{\lambda_{\chi}}{Q_{\pi_1} Q_{\pi_2}}\Big) \Big( \frac{L(1/2, \pi_1 \times \chi^{\alpha})L(1/2, \pi_2\times \chi^{\beta})}{L(1, \text{Ad}\, \pi_1) L(1, \text{Ad}\, \pi_1) }\Big)^{1/2}\\
& \ll \frac{L(1, \eta_E)^{\frac{3}{4} + \frac{\varepsilon}{2}}  }{ L(1, \text{Ad}\, \pi_1 \times \theta_E)^{\frac{1}{8} - \frac{\varepsilon}{4}}L(1, \text{Ad}\, \pi_1 \times \theta_E)^{\frac{1}{8}- \frac{\varepsilon}{4} }(\log {D})^{\frac{1}{4} -\frac{\varepsilon}{2}}}  + \exp(- \textstyle\frac{1}{4}\sqrt{\log {D}})  \ll (\log {D})^{1/4 + \varepsilon}
\end{split}
\end{equation*}
by \eqref{titch2} and \eqref{titch3}. This completes the proof of Theorem \ref{thm3}(b).

\appendix

\section{Some explicit computations related to Waldspurger's formula}\label{App-A} 

 Our aim in this appendix is to justify the bound \eqref{boundF}. We begin by explicating the shape of Waldspurger's formula, in the explicit form given by \cite{FMP}, which leads to the expression \eqref{twisted-Walds}.

 Let $\bB$, $\mathscr{O}$ and $K$ be as in Section \ref{sec:adelic-to-classical}. Let $N$ denote the discriminant of $\mathscr{O}$; then $N$ is square-free. Fix an optimal embedding $\iota$ of the quadratic field $E$ into $\bB(\Q)$ satisfying \eqref{eq:optimal} and \eqref{conjugation-Tinfty}. As usual we write $\bT_\iota$ for the associated torus in $\bPB^\times$.

 Let $\sigma\subset L^2_{\rm disc}([\bPB^\times])$ be irreducible and have non-zero invariants by $K$. Let $\phi_\sigma$ be a non-zero vector in the line $\sigma^K$, normalized to have $L^2$-norm 1. Fix $\chi\in \widetilde{\rm Cl}_E^\vee$ and assume that ${\rm Hom}_{\bT_\iota}(\sigma,\chi)\neq 0$. Recall from the discussion in \S\ref{sec:fractional} that $\phi^\circ_\sigma={\rm g}.\phi_\sigma$ is the global Gross--Prasad vector, with respect to the pair $(\bT_\iota,\chi)$.

 We recall the twisted adelic torus period $\mathcal{P}^\chi_\bT(\phi_\sigma^\circ)$ from \eqref{twisted}, where the measure is normalized to have volume 1. Note that the toric period in \cite{FMP} is taken with respect to the Tamagawa measure $\mu_{\rm Tam}^\bT$ on $[\bT]$. Moreover, the $L^2$-normalization of the test vectors in \cite{FMP} is itself taken with respect to Tamagawa measure $\mu_{\rm Tam}^{\bG}$ on $[\bG]$. 

  Let $\pi$ be the irreducible cuspidal automorphic representation of $\PGL_2(\A_\Q)$ of level $N$ corresponding to $\sigma$ via the Jacquet--Langlands correspondence.  We apply \cite[Theorem 1.1]{FMP} with
\[
S(\Omega) = S_2(\pi) = \emptyset,\qquad S(\pi) = S_1(\pi)=\{p \mid N \},\quad\textrm{and}\quad  S_0(\pi) = \{p \mid (D, N)\}.
\]
From \cite[Theorem 1.1]{FMP} we obtain  
\[
|\mathcal{P}_\bT^\chi(\phi_{\sigma}^\circ)|^2 =\frac{\mu_{\rm Tam}^\bG ([\bG])^2}{\mu_{\rm Tam}^\bT([\bT])^2}\frac12\frac{1}{\sqrt{D}}L_{S(\pi)}(1,\eta_E)\zeta^{S(\pi)}(2)\prod_{p\mid N}e(E_p/\Q_p)C_\infty(E,\pi,\chi)\frac{L^{S_0}(1/2,\pi\times\chi)}{L^{S_0}(1,{\rm Ad}\, \pi)},
\]
where $C_{\infty}(E, \pi, \chi)$ is defined in \cite[Section 7B]{FMP} and recalled below. Note that, if $S$ is a finite (possibly empty) set of primes, the superscript notation  $L^S$ includes the local factor at infinity. Using $\mu_{\rm Tam}^\bT([\bT])={\rm Res}_{s=1}\zeta_E(s)=L(1,\eta_E)$ and reorganizing we obtain
\[
|\mathcal{P}_\bT^\chi(\phi_{\sigma}^\circ)|^2 =C_{\bG}C_{\rm Ram}( \pi, \chi)\frac{1}{L(1,\eta_E)^2}\frac{1}{\sqrt{D}}\frac{L(1/2,\pi\times\chi)}{L(1,{\rm Ad}\, \pi)}F(\pi_\infty,\chi_\infty),
\]
where $C_{\bG}=\mu_{\rm Tam}^\bG(\bG)^2\frac12\xi(2)$,
\[
C_{\rm Ram}( \pi, \chi)=\frac{L_{S_0}(1,{\rm Ad}\, \pi)}{L_{S_0}(1/2,\pi\times\chi)}\prod_{p\mid N}e(E_p/\Q_p) \frac{1 - p^{-2}}{1 - \eta_E(p) p^{-1}},
\]
and
\[
F(\pi_\infty,\chi_\infty)=C_\infty(\pi,\chi)\frac{L_\infty(1/2,\pi\times\chi)}{L_\infty(1,{\rm Ad}\, \pi)}.
\]

Recall the notation $\lambda_\pi$ for the spectral parameter of $\pi$ in Section \ref{sec:Weyl} and $\lambda_\chi\in\R$ for the frequency of $\chi$ from Section \ref{sec:fractional}.

\begin{lemma}
We have
\[
C_{\rm Ram}( \pi, \chi)\ll_\varepsilon N^{\varepsilon}\qquad\textrm{and}\qquad F(\pi_\infty,\chi_\infty)\ll \exp(-c_0|\lambda_\chi| /\lambda_\pi).
\]
\end{lemma}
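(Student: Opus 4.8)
The plan is to prove the two bounds separately, as they involve genuinely different mechanisms: $C_{\rm Ram}(\pi,\chi)$ is a finite product of non-archimedean factors at primes dividing $N$, while $F(\pi_\infty,\chi_\infty)$ is a purely archimedean quantity whose exponential decay in $|\lambda_\chi|$ reflects the mismatch in the archimedean $L$-parameters of $\pi$ and $\chi$.

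\textbf{The factor $C_{\rm Ram}(\pi,\chi)$.} Here I would argue as follows. The product $\prod_{p\mid N}e(E_p/\Q_p)\frac{1-p^{-2}}{1-\eta_E(p)p^{-1}}$ is plainly $O_\varepsilon(N^\varepsilon)$: the epsilon-factors $e(E_p/\Q_p)$ have absolute value $1$, and the remaining rational function in $p$ is bounded above and below by absolute constants, so the product over $\omega(N)\ll_\varepsilon N^\varepsilon$ primes contributes at most $N^\varepsilon$. The only real point is the ratio $L_{S_0}(1,{\rm Ad}\,\pi)/L_{S_0}(1/2,\pi\times\chi)$, where $S_0=\{p\mid(D,N)\}$. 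Since this is a \emph{finite} product of local factors (one per prime in $S_0$), each local factor is a ratio of finite Euler factors evaluated at $s=1$ resp.\ $s=1/2$; using the Kim--Sarnak bound $|\alpha_\pi(p,i)|\le p^{1/6}$ from \cite{KS} (equivalently $\delta<1/6$), each such local factor is bounded above and below by absolute constants (the denominators $1-\alpha p^{-1/2}$ etc.\ are bounded away from $0$ since $p^{1/6-1/2}<1$), so the whole ratio is $\ll_\varepsilon N^\varepsilon$ as well. Assembling, $C_{\rm Ram}(\pi,\chi)\ll_\varepsilon N^\varepsilon$.

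\textbf{The factor $F(\pi_\infty,\chi_\infty)$.} This is the main obstacle. By definition $F(\pi_\infty,\chi_\infty)=C_\infty(\pi,\chi)\,L_\infty(1/2,\pi\times\chi)/L_\infty(1,{\rm Ad}\,\pi)$. When $E$ is imaginary quadratic one has $\chi_\infty=1$ and $\lambda_\chi=0$, so there is nothing to prove beyond a bound polynomial in $\lambda_\pi$; the content is the real quadratic case, where $\chi_\infty(x_1,x_2)=|x_1|^{i\lambda_\chi}|x_2|^{-i\lambda_\chi}$ and $|\lambda_\chi|$ may be large. The archimedean $L$-factor $L_\infty(1/2,\pi\times\chi)$ is a product of four Gamma factors whose shifts combine the spectral parameter $\lambda_\pi$ of $\pi_\infty$ with $\pm\lambda_\chi$; by Stirling's formula this grows like a fixed power of $(1+|\lambda_\chi|)$ times $\exp\!\bigl(-\tfrac{\pi}{2}|\lambda_\chi|+O(\lambda_\pi\log(2+|\lambda_\chi|))\bigr)$ when $\lambda_\pi$ is small, so the genuine exponential decay already comes from the $\Gamma$-factors. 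One must then check that the explicit local constant $C_\infty(\pi,\chi)$ from \cite[\S7B]{FMP} — which is a product of archimedean integrals of matrix coefficients against $\chi_\infty$ — does not reintroduce exponential growth; here I would invoke the standard fact that such local Waldspurger integrals are bounded polynomially in the archimedean conductors of $\pi_\infty$ and $\chi_\infty$ (this is how the local spectral constants are controlled throughout \cite{FMP, MV2}), which suffices since a polynomial in $(1+|\lambda_\chi|)$ is absorbed into $\exp(-c_0|\lambda_\chi|/\lambda_\pi)$ after shrinking $c_0$. Dividing by $L_\infty(1,{\rm Ad}\,\pi)$, which depends only on $\lambda_\pi$ and is bounded below polynomially, preserves the estimate. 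The factor $1/\lambda_\pi$ in the exponent $-c_0|\lambda_\chi|/\lambda_\pi$ records precisely the width of the ``stability range'' in $\lambda_\chi$ relative to the size of $\lambda_\pi$: for $|\lambda_\chi|\lesssim\lambda_\pi$ the Gamma quotient is still of polynomial size, and only beyond that scale does the $-\tfrac{\pi}{2}|\lambda_\chi|$ term dominate the $O(\lambda_\pi\log|\lambda_\chi|)$ error. Combining, $F(\pi_\infty,\chi_\infty)\ll\exp(-c_0|\lambda_\chi|/\lambda_\pi)$ for a suitable absolute $c_0>0$, which together with the bound on $C_{\rm Ram}$ completes the proof of the lemma and hence of \eqref{boundF}.
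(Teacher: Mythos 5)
The argument for $C_{\rm Ram}(\pi,\chi)$ works, but you take a slightly different route from the paper, and you make a small misidentification. You call $e(E_p/\Q_p)$ an ``epsilon-factor'' of absolute value $1$; in \cite{FMP} it denotes the ramification index of $E_p/\Q_p$, so it takes the values $1$ or $2$. This does not affect the bound (it is still $O(1)$ per prime), but it is worth correcting. More substantively, you invoke the Kim--Sarnak bound $|\alpha_\pi(p,i)|\le p^{1/6}$ for the primes in $S_0\subseteq\{p\mid N\}$; the paper instead uses the fact that $N$ is square-free, so $\pi_p$ is (a twist of) Steinberg at every $p\mid N$ and its Satake parameters satisfy the \emph{exact} bound $|\alpha(p,i)|\le p^{-1/2}$. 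Your version is correct but less sharp, and it is worth noting the cleaner, unconditional input that is available at ramified primes.

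The argument for $F(\pi_\infty,\chi_\infty)$ has a genuine gap. You assert that $C_\infty(E,\pi,\chi)$ ``is bounded polynomially in the archimedean conductors of $\pi_\infty$ and $\chi_\infty$,'' so that it cannot ``reintroduce exponential growth.'' This is false for the normalization of \cite{FMP}: the paper computes $C_\infty(E,\pi,\chi)=2^k$ when $\pi_\infty$ is discrete series of weight $k$ and $E$ is real, and $\Gamma(k)/(\pi\Gamma(k/2)^2)\asymp 2^k/\sqrt{k}$ when $E$ is imaginary. These are exponential in $k$, hence exponential in $\lambda_\pi\asymp k$, not polynomial. The reason the final bound nonetheless holds uniformly in $k$ is that this exponential growth is exactly cancelled by the $(2\pi)^{-k}$-type factors appearing in the ratio $L_\infty(1/2,\pi\times\chi)/L_\infty(1,{\rm Ad}\,\pi)$; one can check this, e.g., via the Legendre duplication formula $\Gamma(k)=2^{k-1}\pi^{-1/2}\Gamma(k/2)\Gamma((k+1)/2)$. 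So the correct statement is that the \emph{combination} $C_\infty\cdot L_\infty(1/2,\pi\times\chi)/L_\infty(1,{\rm Ad}\,\pi)$ is polynomially bounded (in fact exponentially decaying in $|\lambda_\chi|$), not $C_\infty$ on its own. Your proof strategy (Stirling plus control of $C_\infty$) is the right shape, but it cannot be carried out by quoting a polynomial bound on $C_\infty$ alone; you must either insert the explicit formulas from \cite[\S 7B]{FMP} and verify the cancellation case by case, as the paper does, or else appeal to a formulation of the local constant in which the Gamma factors have already been absorbed. As written, the step ``$C_\infty$ is polynomial, hence harmless'' is an unsupported and in fact incorrect claim, and the $k$-dependence of the final estimate is not controlled by your argument.
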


\begin{proof}
At the finite places the local factors are given by \eqref{dir1} -- \eqref{RS-euler}.  Since $|\alpha(p, i)| \leq p^{-1/2}$ for $p \mid N$ we obtain $|L_p(1/2, \pi \times \chi)|^{-1} \leq (1 - p^{-1})^{-4}$ and $|L_p(1, \text{Ad} \,\pi)| \leq 1$, and so $|C_{\rm Ram}( \pi, \chi)| \leq \prod_{p\mid N} 2 (1 - 1/p)^{-5} \ll N^{\varepsilon}$. 

For the second estimate, recall that $\pi_{ \infty}$ can be either discrete series of weight $k$ or principal series with spectral parameter $t$. In the latter case, it suffices to assume that $t\in\R$, so that $\pi_\infty$ is tempered. In this notation, the archimedean $L$-factors take the form 
\[
L_{\infty}(s, \pi \times \chi) = \begin{cases} 4 (2\pi)^{-2s} \prod_{\pm}\Gamma(s \pm it), &  \pi_{\infty}\text{ principal series, } E \text{ imaginary;}\\ 
 \pi^{-2s} \prod_{\pm, \pm} \Gamma(\frac{1}{2}(s \pm it \pm i\lambda_{\chi})), &\pi_{\infty}\text{ principal series, } E \text{ real;}\\
  4(2\pi )^{-2s - (k-1)}\prod_{\pm} \Gamma(s + \frac{1}{2}(k-1) \pm i\lambda_{\chi}), & \pi_{\infty}\text{ discrete series, } 
 \end{cases}
 \]
 and
 \[
 L_{\infty}(s, \text{Ad } \pi) = \begin{cases}  \pi^{-3s/2} \Gamma(s/2) \prod_{\pm}\Gamma(s/2 \pm it), & \pi_{\infty}\text{ principal series;}\\2^{2-k-s} \pi^{(1 - 2k - 3s)/2} \Gamma(s + k - 1) \Gamma((s+1)/2),& \pi_{\infty}\text{ discrete series.} \end{cases}
 \]
 Moreover, by definition, we have
\[
C_\infty(E, \pi , \chi) = \begin{cases} 1, & \pi_{ \infty}\text{ principal series};\\ \frac{\Gamma(k )}{\pi \Gamma(k /2)^2}, & \pi_{\infty} \text{ discrete series, } E \text{ imaginary};\\
2^k, & \pi_{ \infty} \text{ discrete series, }  E \text{ real}.\end{cases}
\]
By Stirling's formula we have
\[
C_\infty(E, \pi , \chi) \frac{L_{\infty}(1/2, \pi \times \chi)}{L_{\infty}(1, \text{Ad } \pi)} \ll \begin{cases}   1, & E \text{ imaginary};\\  e^{-\pi \max(0,|\lambda_{\chi}| - |t|)} , &\pi_{\infty}\text{ principal series, } E \text{ real;}\\
e^{-c\min(|\lambda_{\chi}|, |\lambda_{\chi}|^2/k)}, &\pi_{\infty}\text{ discrete series, } E \text{ real},
 \end{cases}
\]
for some absolute constant $c > 0$. Using $\lambda_\pi^2=k(k+1)$ or $\lambda_\pi^2=1/4+t^2$ according to whether $\pi_\infty$ is discrete or principal series, we estimate this very crudely by $\exp(-c_0|\lambda_\chi|/ \lambda_\pi)$.
\end{proof}


\begin{thebibliography}{BFKMMS}

\bibitem[AES]{AES} M. Aka, M. Einsiedler, U. Shapira, \emph{Integer points on spheres and their orthogonal lattices}, Invent. Math. \textbf{206} (2016), 379-396.

\bibitem[AEW]{AEW} M. Aka, M. Einsiedler, A. Wieser, \emph{Planes in four space and four associated CM points}, {\tt arxiv:1901.05833}

\bibitem[ALMW]{ALMW} M. Aka, M. L\"uthi, P. Michel, A. Wieser, \emph{Simultaneous supersingular reductions of CM elliptic curves}, {\tt arxiv:2005.01537}

\bibitem[BFKMMS]{BFKMMS} V. Blomer, E. Fouvry, E. Kowalski, P. Michel, D. Mili\'cevi\'c, W. Sawin, \emph{The second moment theory of families of $L$-functions}, Mem. AMS,  to appear


\bibitem[BH]{BH}  V. Blomer, G. Harcos, \emph{Twisted $L$-functions over number fields and Hilbert's eleventh problem},  Geom. Funct. Anal. \textbf{20} (2010),  1-52

%\bibitem[BM]{BM} V. Blomer, P. Maga, \emph{Subconvexity for sup-norms of cusp forms on ${\rm PGL}(n)$}, Selecta Math. \textbf{22} (2016), 1269-1287


\bibitem[Cha]{Ch}  V. Chandee, \emph{Explicit upper bounds for $L$-functions on the critical line}, Proc.
Amer. Math. Soc. \textbf{137} (2009), 4049-4063

\bibitem[Che]{Che} Th. Chelluri, \emph{Equidistribution of roots of quadratic congruences}, Thesis (Ph.D.) Rutgers The State University of New Jersey - New Brunswick.

%\bibitem[CU]{CU} L. Clozel, E. Ullmo, \emph{\'Equidistribution de mesures alg\'ebriques},   Compos. Math. \textbf{141} (2005), 1255-1309

%\bibitem[Cog]{Cog} J. Cogdell, \emph{On sums of three squares}, J. Th\'eor. Nombers Bordeaux \textbf{15} (2003), 33-44.


%\bibitem[Coh]{Coh} P. Cohen, \emph{Hyperbolic equidistribution problems on Siegel 3-folds and Hilbert modular varieties}, Duke Math. J. \textbf{129} (2005), 87-128


\bibitem[De]{De} M. Deuring, \emph{Die Typen der Multiplikatorenringe elliptischer Funktionenk\"orper}, Abh. Math. Sem. Hansischen Univ. \textbf{14} (1941), 197-272

\bibitem[DPSS]{DPSS} M. Dickson, A. Pitale, A. Saha, R. Schmidt, \emph{Explicit refinements of B\"ocherer's conjecture for Siegel modular forms of squarefree level}, J. Math. Soc. Japan \textbf{72} (2020),  251-301. 

\bibitem[Du]{Du} W. Duke, \emph{Hyperbolic distribution problems and half-integral weight Maass
forms},  Invent. Math. \textbf{92} (1988), 73-90



\bibitem[DFI]{DFI} W. Duke, J. Friedlander, H. Iwaniec, \emph{Bounds for automorphic L-functions}, 
Invent. Math. \textbf{112} (1993), 1-8

\bibitem[DSP]{DSP} W. Duke, R. Schulze-Pillot, \emph{Representation of integers by positive ternary quadratic forms and equidistribution of lattice points on ellipsoids}, Invent. Math. \textbf{99} (1990),   49-57


\bibitem[EL]{EL} M. Einsiedler, E. Lindenstrauss, \emph{Joinings of higher rank torus actions on homogeneous spaces},  Publ. Math. Inst. Hautes \'Etudes Sci. \textbf{129} (2019), 83-127

\bibitem[ELMV1]{ELMV1} M. Einsiedler, E. Lindenstrauss, P. Michel, A. Venkatesh, \emph{Distribution of periodic torus orbits on homogeneous spaces},  Duke Math. J. \textbf{148} (2009),  119-174.


\bibitem[ELMV2]{ELMV2} M. Einsiedler, E. Lindenstrauss, P. Michel, A. Venkatesh, \emph{Distribution of periodic torus orbits and Duke's theorem for cubic fields},  Ann. of Math. \textbf{173} (2011),  815-885.

\bibitem[ELMV3]{ELMV3} M. Einsiedler, E. Lindenstrauss, P. Michel, A. Venkatesh, \emph{The distribution of closed geodesics on the modular surface, and Duke's theorem},  Enseign. Math. \textbf{58} (2012),  249-313.

\bibitem[EMV]{EMV} J. Ellenberg, P. Michel, A. Venkatesh, \emph{Linnik's ergodic method and the distribution of integer points on spheres}, in: Automorphic representations and L-functions, Tata Inst. Fundam. Res. Stud. Math.  \textbf{22} (2013),  119-185

\bibitem[EV1]{EV} J. Ellenberg, A. Venkatesh, \emph{Reflection Principles and Bounds for Class Group Torsion}, Int. Math. Res. Not.  2007, rnm002, 18pp.

\bibitem[EV2]{EV08} J. Ellenberg, A. Venkatesh, \emph{Local-global principles for representations of quadratic forms}, Invent. Math. \textbf{171} (2008), no. 2, 257-279.

%\bibitem[EM]{EM} A. Eskin, C. McMullen. \emph{Mixing, counting, and equidistribution in Lie groups}. Duke Math. J. \textbf{71} (1993),  181-209.


\bibitem[FMP]{FMP} D. File, K. Martin, A. Pitale, \emph{Test vectors and central values for $\GL(2)$}, Algebra and Number Theory {\bf 11} (2), 253-318, 2017.

\bibitem[Ga]{Ga} C. F. Gau{\ss},  \emph{Disquisitiones Arithmeticae}, English Edition, Springer 1986.

%\bibitem[Gr]{Gr} B. H. Gross, \emph{Heights and the special values of $L$-series},  in:  Number Theory (Montreal, Quebec, 1985),  CMS Conference Proceedings \textbf{7} (1987), 115-187, AMS, Providence, RI

\bibitem[GP]{GP} B. Gross, D. Prasad, \emph{Test vectors for linear forms},  Math. Ann. \textbf{291}  (1991),  343-355. 

\bibitem[Ho]{Ho} R. Holowinsky, A sieve method for shifted convolution sums. Duke Math. J. \textbf{146} (2009),  401-448. 

\bibitem[Hoo]{Hoo} C. Hooley, \emph{On the Pellian equation and the class number of indefinite binary quadratic forms}, 
J. Reine Angew. Math. \textbf{353} (1984), 98-131


\bibitem[Iw]{Iw} H. Iwaniec, \emph{Fourier coefficients of modular forms of half-integral weight}. Invent. Math. \textbf{87} (1987), 385-401.

\bibitem[Kh]{Kh} I. Khayutin, \emph{Joint equidistribution of CM points}, Annals of Math. \textbf{189} (2019), 145-276

\bibitem[KS]{KS} H. H. Kim, \emph{Functoriality for the exterior square of ${\rm GL}_4$ and the symmetric fourth of ${\rm GL}_2$}, J. Amer. Math. Soc. \textbf{16} (2003),  139-183. With appendix 1 by Dinakar Ramakrishnan and appendix 2 by Kim and Peter Sarnak. 

%\bibitem[La]{La} S. Lang, \emph{Elliptic functions}, 2nd ed., Graduate Texts in Mathematics \textbf{112}, Springer 1987

\bibitem[LR]{LR} S. Lester, M. Radziwi\l\l, Quantum unique ergodicity for half-integral weight automorphic forms. Duke Math. J. \textbf{169} (2020),   279-351.

\bibitem[Li]{WLi} W. Li, \emph{$L$-series of Rankin type and their functional equations}, Math. Ann. \textbf{244} (1979), 135-166


\bibitem[Li1]{Li1} Y. V. Linnik, \emph{The asymptotic distribution of reduced binary quadratic forms
in relation to the geometries of Lobacevskii. III},  Vestnik Leningrad. Univ. \textbf{10}
(1955), 15-27.


\bibitem[Li2]{Li2} Y. V. Linnik, \emph{Asymptotic-geometric and ergodic properties of sets of lattice
points on a sphere}, Mat. Sb. N.S. \textbf{43(85)} (1957) 257-276. Translation in Amer. Math. Soc. Transl. (2) \textbf{13} (1960), 9-27.

%\bibitem[Ma]{Ma} K. Martin, \emph{Exact double averages of twisted $L$-functions}, {\tt arxiv:2006.04914}

%\bibitem[MW]{MW}  K. Martin, D. Whitehouse, \emph{Central $L$-values and toric periods for ${\rm GL}(2)$},  Int. Math. Res. Not. IMRN 2009, 141Ð191

\bibitem[Mi]{Mi} P. Michel, \emph{The subconvexity problem for Rankin-Selberg $L$-functions and equidistribution of Heegner points},  Ann. of Math. (2) \textbf{160} (2004),  185-236

\bibitem[MV]{MV} P. Michel, A. Venkatesh, \emph{Equidistribution, $L$-functions and ergodic theory: on some problems of Yu. Linnik},  Intern. Congr.  Math. II, 421-457, Eur. Math. Soc.  2006

\bibitem[MV2]{MV2} P. Michel, A. Venkatesh, \emph{The subconvexity problem for $\GL_2$}. Publ. Math. Inst. Hautes \'Etudes Sci, \textbf{111} (2010), 171-271.

%\bibitem[Na]{Na} W. Narkiewicz, \emph{Elementary and analytic theory of algebraic numbers},  Springer-Verlag 2004  

%\bibitem[Pe]{Pe} M. Peters, \emph{Darstellungen durch definite tern\"are quadratische Formen} Acta Arith. \textbf{34}, 57-80 (1977)

%\bibitem[Po]{Po} A. Popa, \emph{Central values of Rankin $L$-series over real quadratic fields},  Compos. Math. \textbf{142} (2006),   811-866.


%\bibitem[RV]{RV} D. Ramakrishnan, R. Valenza, \emph{Fourier analysis on number fields}, GTM \textbf{186} (1999), Springer-Verlag


\bibitem[Re]{Rez} A. Reznikov, \emph{Rankin--Selberg without unfolding and bounds for spherical Fourier coefficients of Maass forms}, J. Am. Math. Soc. 21 (2008), p. 439-477

\bibitem[Sh]{Sh} T. Shemanske, \emph{Representations of ternary quadratic forms and the class number of imaginary quadratic fields}, Pacific J. Math. \textbf{122} (1986), no. 1, 223-250.

%\bibitem[Si]{Si} J. Silverman, \emph{Advanced topics in the arithmetic of elliptic curves}, Graduate Texts in Mathematics \textbf{151},  Springer, 1994.

\bibitem[Sk]{Sk} B. F. Skubenko, \emph{The asymptotic distribution of integers on a hyperboloid of one sheet and ergodic theorems}, Izv. Akad. Nauk SSSR Ser. Mat. \textbf{26} (1962), 721-752.


\bibitem[So]{So} K. Soundararajan, \emph{Moments of the Riemann zeta function},  Ann. of Math. \textbf{170} (2009), 981-993.

%\bibitem[Te]{Te} Yu. G. Teterin, \emph{Representations of integers by positive ternary quadratic forms}, J. Sov. Math. \textbf{29}, 1312-1342 (1985).

%\bibitem[Tu]{Tu} Tunnell

\bibitem[Ven]{Ven} A. Venkatesh, \emph{Sparse equidistribution problems, periods and subconvexity}, Ann. of Math. \textbf{172} (2010), 989--1094.

\bibitem[Ve]{Ve} B. A. Venkov, \emph{On the arithmetic of quaternion algebras} (Russian), Izv. Akad. Nauk. \textbf{6} (1922/24) 205-220, 221-246 and  \textbf{7} (1929), 489-509, 532-562, 607-622.

\bibitem[Vo]{Vo} J. Voight, \emph{Quaternion algebras}, v.0.9.21

\bibitem[Wa]{Wa} J. -L. Waldspurger, \emph{Sur les valeurs de certaines fonctions $L$ automorphes en leur centre de sym\'etrie},  Compositio Mathematica \textbf{54}   (1985),  173-242.

%\bibitem[Wi]{Wi} A. Wieser, \emph{Linnik's problem and maximal entropy methods}, {\tt arxiv:1801.09012}


%\bibitem[Xu]{Xu} H. Xue, \emph{Central values of Rankin $L$-functions}, Int.
%Math. Res. Not. 2006, 1-41.


%\bibitem[Zh1]{Zh1} S.-W. Zhang, \emph{Gross-Zagier formula for ${\rm GL}_2$}, Asian J. Math. \textbf{5} (2001), 183-290

%\bibitem[Zh2]{Zh2} S.-W. Zhang, \emph{Gross-Zagier formula for GL(2). II}, in:  Heegner points and Rankin $L$-series. Math. Sci. Res. Inst.
%Publ. \textbf{49} (2004),  191-214, Cambridge University Press

%\bibitem[Zh3]{Zh3} S.-W. Zhang, \emph{Equidistribution of CM-points on quaternion Shimura varieties}, Int.
%Math. Res. Not. 2005, 3657-3689.



\end{thebibliography}
\end{document}